\newcommand{\bC}{\mathbb{C}}
\newcommand{\Q}{\mathbb{Q}}
\newcommand{\Z}{\mathbb{Z}}
\newcommand{\sC}{\mathcal{C}}
\newcommand{\sD}{\mathcal{D}}
\newcommand{\sO}{\mathcal{O}}
\newcommand{\sT}{\mathcal{T}}
\newcommand{\bS}{\mathbb{S}}
\newcommand{\fM}{\mathfrak{M}}
\newcommand{\Mod}{\operatorname{Mod}}
\newcommand{\free}{\operatorname{{free}}}
\newcommand{\perf}{\operatorname{perf}}
\newcommand{\cry}{\operatorname{cry}}
\newcommand{\LK}{{L_{K(1)}}}
\newcommand{\THH}{\operatorname{THH}}
\newcommand{\TP}{\operatorname{TP}}
\newcommand{\TCn}{\operatorname{TC}^{-}}
\newcommand{\HP}{\operatorname{HP}}
\newcommand{\Hom}{\operatorname{Hom}}
\newcommand{\Spec}{\operatorname{Spec}}
\newcommand{\tr}{{\operatorname{tr}}}
\newcommand{\et}{{\operatorname{\acute{e}t}}}
\renewcommand{\lim}{\operatornamewithlimits{\varprojlim}}
\newcommand{\ol}{\overline}
\renewcommand{\epsilon}{\varepsilon}
\newcommand{\Bcry}{B_{\operatorname{cry}}}
\newcommand{\Acry}{A_{\operatorname{cry}}}
\newcommand{\Ainf}{A_{\operatorname{inf}}}
\newcommand{\Ainfmu}{A_{\operatorname{inf}}[\frac{1}{ \mu }]}
\newcommand{\can}{\operatorname{can}}
\newcommand{\Cu}{\mathcal{C}}
\newcommand{\mS}{\mathfrak{S}}
\newcommand{\Cat}{\operatorname{Cat_{\infty}^{perf}}}
\newcommand{\Catsat}{\operatorname{Cat_{\infty,sat}^{perf}}}
\newcommand{\Catcsat}{\operatorname{Cat_{\infty,sat}^{c,perf}}}
\newcounter{spec}
{\end{list}}%
\newtheorem{lemma}{Lemma}[section]
\newcommand{\Sp}{\operatorname{Sp}}
\newtheorem{thm}[lemma]{Theorem}
\newtheorem{prop}[lemma]{Proposition}
\newtheorem{cor}[lemma]{Corollary}
\newtheorem{conj}[lemma]{Conjecture}
\theoremstyle{definition}
\newtheorem{defn}[lemma]{Definition}
\theoremstyle{remark}
\newtheorem{remark}[lemma]{Remark}
\numberwithin{equation}{section}
\newtheorem{notation}[lemma]{Notation}
\newcommand{\mf}{\mathfrak}
\title[$p$-adic Hodge theory]{Towards the $p$-adic Hodge theory for non-commutative algebraic varieties}\author{Keiho Matsumoto}
\begin{document}
\maketitle

\begin{abstract}
We construct a K-theory version of Bhatt-Morrow-Scholze's Breuil-Kisin cohomology theory for $\sO_K$-linear idempotent-complete, small smooth proper stable $\infty$-categories, where $K$ is a discretely valued extension of $\Q_p$ with perfect residue field. As a corollary, under the assumption that $K(1)$-local K theory satisfies the K\"unneth formula for $\sO_K$-linear idempotent-complete, small smooth proper stable $\infty$-categories, we prove a comparison theorem between $K(1)$-local K theory of the generic fiber and topological cyclic periodic homology theory of the special fiber with $\Bcry$-coefficients, and $p$-adic Galois representations of $K(1)$-local K theory for $\sO_K$-linear idempotent-complete, small smooth proper stable $\infty$-categories are semi-stable. We also provide an alternative K-theoretical proof of the semi-stability of p-adic Galois representations of the p-adic \'etale cohomology group of smooth proper varieties over $K$ with good reduction. This is a short, preliminary version of the work that was later expanded in \cite{matsumoto2023crystalline}.
\end{abstract}
\section{Introduction}
\begin{notation}
Fix a prime $p$. Let $K$ be a discretely valued field whose residue field $k=\sO_K/\pi$ is perfect of characteristic $p$. Here, $\sO_K$ is the ring of integers of $K$ and $\pi\in \sO_K$ is a uniformizer. We write $\Cu$ to denote the completion $\hat{\overline{K}}$ of $\overline{K}$ endowed with its unique absolute value extending the given absolute value on $K$, let $W$ be the Witt ring of $k$, and let $K_0$ be the fraction field of $W$. Let $\mf{m}$ be the maximal ideal of $\sO_K$. For a spectra $S$, let $L_{K(1)}S$ be the Bousfield localization of complex $K$ theory at prime $p$.
\end{notation}

\subsection{Background} For a $\Q_p$-vector representation $V$ of $G_K$, for $\bullet\in \{\text{crys, st}\}$, denote $D_\bullet(V)$ the $K_0$-vector space $(B_\bullet\otimes_{\Q_p}V)^{G_K}$. There exists an inequation (see \cite{Fontainecrys} and \cite{Fontainesemi})
\[
\dim_{K_0}D_{\text{crys}}(V) \leq \dim_{K_0}D_{\text{st}}(V) \leq \dim_{\Q_p} V.
\]
We say that $V$ is crystalline (resp. semi-stable) if $\dim_{K_0}D_{\text{crys}}(V)=\dim_{\Q_p} V$ (rsep. $\dim_{K_0}D_{\text{st}}(V) = \dim_{\Q_p} V$). For a variety $X$ over $\sO_K$, the absolute Galois group $G_K:=\text{Gal}(\ol{K}/K)$ acts on $p$-adic \'etale cohomology theory $H^i_{\et}(X_\sC,\Q_p)$ and $K(1)$-local $K$ theory $\pi_*\LK K(X_\sC)$. We then get the following result.
\begin{lemma}
 We assume $X_\sC$ is smooth over $\sC$. $H^i_{\et}(X_\sC,\Q_p)$ is crystalline (resp. semi-stable) for any $i$ if and only if $\pi_j\LK K(X_\sC)$ is crystalline (resp. semi-stable) for $j=0,1$.
\end{lemma}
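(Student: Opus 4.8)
The plan is to compare $K(1)$-local $K$-theory with $p$-adic \'etale cohomology through the \'etale descent spectral sequence, and then to read off both implications from the stability properties of the category of crystalline (resp.\ semi-stable) representations. Since $X$ is a variety over $\sO_K$, the base change $X_\sC$ is of finite type over $\sC=\hat{\ol K}$, which is algebraically closed, so $X_\sC$ has finite mod-$p$ \'etale cohomological dimension. By Thomason's descent theorem and its modern refinements, the comparison map $\LK K(X_\sC)\to \LK K^{\et}(X_\sC)$ is an equivalence, and there is a convergent, $G_K$-equivariant spectral sequence
\[
E_2^{s,t}=H^s_\et(X_\sC,\Q_p(t)) \Longrightarrow \pi_{2t-s}\LK K(X_\sC)\otimes\Q_p ,
\]
in which $t$ records the Tate twist and the $G_K$-action on $E_2$ is the usual one on \'etale cohomology. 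All the groups $H^s_\et(X_\sC,\Q_p(t))$ are finite-dimensional and vanish for $s\notin[0,2\dim X_\sC]$, so the spectral sequence lies in a bounded region and all filtrations on $\pi_*\LK K(X_\sC)\otimes\Q_p$ are finite.

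The next step is rational degeneration. The $p$-adic Adams operations $\psi^k$, $k\in\Z_p^\times$, act on $\LK K(X_\sC)$ and on the weight-$t$ summand $E_2^{s,t}$ by multiplication by $k^{t}$; since the differentials commute with all $\psi^k$ while strictly changing the weight, they vanish after rationalization for $r\ge 2$. Hence
\[
\pi_j\LK K(X_\sC)\otimes\Q_p\;\cong\;\bigoplus_{t}H^{2t-j}_\et(X_\sC,\Q_p(t)),
\]
so $\pi_0$ collects the Tate-twisted even-degree \'etale cohomology and $\pi_1$ the odd-degree part; together they involve every $H^i_\et(X_\sC,\Q_p)$, each up to a Tate twist. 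Bott periodicity, $\pi_{n+2}\LK K(X_\sC)\cong\pi_n\LK K(X_\sC)\otimes\Z_p(1)$ as $G_K$-modules, shows that restricting to $j=0,1$ loses nothing.

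It then remains to invoke the closure properties of the relevant Galois categories: the full subcategory of crystalline (resp.\ semi-stable) $\Q_p$-representations of $G_K$ is closed under finite direct sums, under direct summands (hence subrepresentations), and under the Tate twists $V\mapsto V(\pm1)$. For the forward direction, if every $H^i_\et(X_\sC,\Q_p)$ is crystalline (resp.\ semi-stable), then so is every twist $H^i_\et(X_\sC,\Q_p(t))$, hence so is the finite direct sum computing $\pi_j\LK K(X_\sC)\otimes\Q_p$ for $j=0,1$. For the converse, if $\pi_0$ and $\pi_1$ are crystalline (resp.\ semi-stable), each direct summand $H^{2t-j}_\et(X_\sC,\Q_p(t))$ is crystalline (resp.\ semi-stable), and untwisting gives the conclusion for every $H^i_\et(X_\sC,\Q_p)$. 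The crystalline and semi-stable cases run in exactly the same way.

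The main obstacle is the combination of the first two steps: one must know that $\LK K(X_\sC)$ really satisfies \'etale (hyper)descent in this generality, that the descent spectral sequence is $G_K$-equivariant with the Tate twists in the positions written above, and that it degenerates rationally with the correct weight bookkeeping. It is precisely this degeneration that pins $\pi_0$ and $\pi_1$ to the even and odd \'etale cohomology (up to twist) and makes the equivalence go through in both directions; once it is available, the only Galois-theoretic input needed is the elementary stability under sums, summands and Tate twists.
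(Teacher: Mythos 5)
Your proof is essentially the same as the paper's: both invoke Thomason's \'etale descent spectral sequence for $\LK K$, rational degeneration identifying $\pi_0$ and $\pi_1$ as the Tate-twisted even- and odd-degree \'etale cohomology, and then the standard closure of the crystalline/semi-stable categories under direct sums, summands and Tate twists. The only difference is that you supply an argument for the degeneration (Adams-operation weight separation) where the paper simply asserts it; otherwise the structure and the citations match.
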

\begin{proof}
Due to Thomason \cite{Thomason}, there exists a spectral sequence (see \cite[(0.1)]{Thomason})
\begin{equation}\label{bakido}
E_2^{i,j}=\left\{
\begin{array}{ll}
H^{i}_\et(X_\sC,\Z_p(l)) & j = 2l \\
0 & j \text{ odd} 
\end{array}
\right. \Longrightarrow \pi_{j-i} L_{K(1)}K(X_\sC).
\end{equation} 
This spectral sequence degenerates after tensoring $\mathbb{Q}_p$ and we have a $G_K$-equivariant isomorphism
\begin{eqnarray}
  \bigoplus_{i\in\Z} H_\et^{2i}(X_\sC,\Q_p(i)) &\simeq & \pi_0L_{K(1)}K(X_\sC)\label{even02}\\
   \bigoplus_{i\in\Z} H_\et^{2i-1}(X_\sC,\Q_p(i)) &\simeq &\pi_1L_{K(1)}K(X_\sC)\label{odd03}.
\end{eqnarray}
For a $\Q_p$-vector representation $V$ of $G_K$ and a natural number $m$, $V$ is crystalline (resp. semi-stable) if and only if $V\otimes_{\Q_p} \Q_p(m)$ is crystalline (resp. semi-stable) (see \cite[5.2 Theorem i)]{Fontainecrys} and \cite[Proposition 1.5.2, Proposition 5.1.2]{Fontainesemi}). A direct sum of crystalline (resp. semi-stable) representations is also crystalline (resp. semi-stable), and a direct summand of crystalline (resp. semi-stable) representations is also crystalline (resp. semi-stable), therefor we obtain the claim.
\end{proof}

This lemma suggests that $K(1)$-local K-theory can be used as an alternative to the $p$-adic \'etale cohomology theory for investigating the properties of p-adic Galois representations of algebraic varieties. K(1)-local K theory can also be defined for a stable $\infty$-category. On the other hand, it has been shown that the integral $p$-adic \'etale cohomology theory cannot be defined for a stable $\infty$-category (see \cite{fundamentalisnotderived}). Let us apply this perspective to other cohomology theories and variants of K-theory.

 In the study of complex manifolds, de Rham theory states that for a complex manifold $M$, the de Rham cohomology $H^i_{\text{dR}}(M)$ of $M$ is naturally isomorphic to its singular cohomology $H^i_{\text{Sing}}(M,\bC)$ with coefficients in $\mathbb{C}$. Blanc predicts the non-commutative version of de Rham theory \cite[Conejcture 1.7]{Blanc}.
\begin{conj}[The lattice conjecture, \cite{Blanc}]\label{lattice}
Let $\sT$ be a smooth proper dg-category over C. Then the map 
\[
\operatorname{Ch}^{\operatorname{top}}\wedge_{\bS}H\bC:K_{\operatorname{top}}(\sT)\wedge_{\bS}H\bC \to \HP(\sT/\bC)
\]
is an equivalence. In particular, there is an isomorphism 
\[
\pi_iK_{\operatorname{top}}(\sT) \otimes_\Z \bC \simeq \HP_i(\sT/\bC).
\]
for any $i$.
\end{conj}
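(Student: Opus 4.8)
\emph{Proof strategy.} The conjecture is open in general, so the following is a plan of attack rather than a proof. The first step is the geometric case $\sT=\operatorname{Perf}(X)$ with $X$ smooth and proper over $\bC$: here one chains three comparison isomorphisms and checks that $\operatorname{Ch}^{\operatorname{top}}\wedge_{\bS}H\bC$ realizes their composite. First, Blanc's theorem identifies $K_{\operatorname{top}}(\operatorname{Perf}(X))$ with the periodic complex $K$-theory spectrum $KU(X^{\operatorname{an}})$ of the analytification, so after smashing with $H\bC$ the Atiyah--Hirzebruch/Chern character gives $\pi_i\bigl(K_{\operatorname{top}}(\operatorname{Perf}(X))\wedge_{\bS}H\bC\bigr)\cong\prod_{n}H^{2n-i}_{\operatorname{Sing}}(X^{\operatorname{an}},\bC)$. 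Second, the Hochschild--Kostant--Rosenberg and Feigin--Tsygan computations identify $\HP(\operatorname{Perf}(X)/\bC)$ with the $2$-periodization $\prod_{n}H^{2n-i}_{\operatorname{dR}}(X/\bC)$ of algebraic de Rham cohomology, with no completion subtlety since $X$ is proper. Third, the classical de Rham theorem relates $H^{\ast}_{\operatorname{dR}}(X/\bC)$ to $H^{\ast}_{\operatorname{Sing}}(X^{\operatorname{an}},\bC)$. The real content is the compatibility of $\operatorname{Ch}^{\operatorname{top}}$ with all three; I would deduce it from the fact that in both the topological and the cyclic worlds the Chern character is the categorical trace of the identity endofunctor, together with GAGA to match the analytic and algebraic sides.

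For a general smooth proper $\sT$ the plan is to use the formalism of noncommutative motives: regard $\operatorname{Ch}^{\operatorname{top}}$ as a multiplicative (lax symmetric monoidal) natural transformation between the additive invariants $\sT\mapsto K_{\operatorname{top}}(\sT)\wedge_{\bS}H\bC$ and $\sT\mapsto\HP(\sT/\bC)$. On the unit one has the compatible equivalence $K_{\operatorname{top}}(\operatorname{Perf}(\bC))\wedge_{\bS}H\bC\simeq KU\wedge_{\bS}H\bC\simeq H\bC[u^{\pm1}]\simeq\HP(\bC/\bC)$, so by additivity $\operatorname{Ch}^{\operatorname{top}}\wedge_{\bS}H\bC$ is an equivalence on every smooth proper category admitting a full exceptional collection, and --- combining this with the geometric case and multiplicativity of the Chern character --- on every $\operatorname{Perf}(X)$ with $X$ smooth proper, and on everything assembled from such by semiorthogonal glueing, retracts and tensor products. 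The residual problem is to establish it for the smooth proper categories that are \emph{not} of geometric origin (e.g.\ a generic Fukaya category, not known to be built from schemes), where there is no analytification and GAGA is unavailable.

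This residual case is the main obstacle, and I do not expect a purely formal way around it: there is no small generating family of smooth proper categories on which one could check $\operatorname{Ch}^{\operatorname{top}}$, and no known dévissage reducing the non-geometric ones to schemes; morally the conjecture demands a Betti--de Rham comparison for a bare dg-category, and a bare dg-category carries no Betti structure. The route I would actually pursue is to descend the comparison to the localizing invariant $\operatorname{HH}(-/\bC)$ with its $S^1$-action --- where the cyclic Chern character $K(-)\to\operatorname{HC}^{-}(-/\bC)$ is classical --- construct there an $S^1$-equivariant comparison with the semitopological Chern character, and only afterwards invert the periodicity element to recover $\operatorname{Ch}^{\operatorname{top}}$. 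The crux then becomes controlling the completion and derived-limit obstructions created by the $S^1$-Tate passage, which should be tamed by the perfectness of $\operatorname{HH}(\sT/\bC)$ (from smoothness and properness) and by Kaledin's degeneration of the noncommutative Hodge-to-de Rham spectral sequence. Matching the two Chern characters intrinsically at the level of $\operatorname{HH}$, and then showing that the periodization loses no information for smooth proper $\sT$, is, I expect, the heart of any proof.
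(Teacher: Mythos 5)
This statement is labelled Conjecture~\ref{lattice} in the paper, not a theorem: the paper offers no proof, and the lattice conjecture of Blanc is indeed open in general. You correctly identify this at the outset, so there is no proof in the paper to compare your attempt against. Your survey of the geometric case --- Blanc's comparison $K_{\operatorname{top}}(\operatorname{Perf}(X))\simeq KU(X^{\operatorname{an}})$, Hochschild--Kostant--Rosenberg and Feigin--Tsygan for $\HP$, and the classical de Rham theorem, linked by compatibility of the Chern characters --- reflects the known reduction, and your diagnosis that the residual obstacle lies with smooth proper dg-categories not of geometric origin (no analytification, no Betti structure, no d\'evissage to schemes) is the standard assessment of why the conjecture is hard. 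Nothing in what you wrote is incorrect; it is simply, as you say yourself, a plan of attack and not a proof, and the paper in question treats the statement exactly the same way, as a motivating conjecture it does not attempt to establish.
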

In the study of $p$-adic cohomology theories, crystalline comparison theory \cite{Falcrys} states that for a smooth proper variety $X$ over $\sO_K$, the $p$-adic \'etale cohomology $H^i_\et(X_\sC,\Q_p)\otimes_{\Q_p}\Bcry$ is isomorphic to $H^i_{cry}(X_\kappa/W)\otimes_{W}\Bcry$, moreover the isomorphism is compatible with $G_K$-action, Frobenius endomorphism and filtration. We study non-commutative version of the crystalline comparison theorem. Inspired by Petrov and Vologodsky's work on non-commutative crystalline cohomology theory \cite{petrov}, and the study of motivic filtration of the $K(1)$-K theory \cite{Thomason} and the topological periodic homology theory \cite{BMS2}, we predict the following conjecture.
\begin{conj}[Non-commutative version of crystalline comparison theorem~\cite{Falcrys}] \label{NCcryconj}
Let $\sT$ be an $\sO_K$-linear idempotent-complete, small smooth proper stable infinity category. Then there is an isomorphism of $\Bcry$-module:
\[
\pi_i\TP(\sT_k;\Z_p)\otimes_W \Bcry \simeq \pi_i \LK K(\sT_\sC)\otimes_{\Z_p}\Bcry
\]
which is compatible with $G_K$-action, Frobenius endomorphism and filtration. In particular, the $p$-adic representation $\pi_i\LK K(\sT_\sC)\otimes_{\Z_p}\Q_p$ is crystalline.
\end{conj}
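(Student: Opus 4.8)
The plan is to construct a non-commutative analogue of the Breuil--Kisin cohomology of Bhatt--Morrow--Scholze and to use it as an intermediary between the two sides of the asserted isomorphism, exactly as $A\Omega$ interpolates between crystalline and \'etale cohomology in the commutative situation. Concretely, to an $\sO_K$-linear idempotent-complete small smooth proper stable $\infty$-category $\sT$ I attach a filtered module over $\mS=W[[u]]$ by taking the Krause--Nikolaus relative topological Hochschild homology $\THH(\sT/\bS[z])$ with $z\mapsto u$, forming $\TP$ and $\TCn$ relatively over $\bS[z]$, and equipping the result with its cyclotomic Frobenius and a Nygaard-type filtration. Smooth-properness of $\sT$ guarantees that $\THH(\sT_{\sO_K};\Z_p)$ is a perfect $\sO_K$-module spectrum, and hence that all the resulting $\mS$-, $W$-, and $\Bcry$-modules are finitely generated; this finiteness is what makes a Fontaine-style dimension count available at the end.

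The first main step is to identify the two specializations of this Breuil--Kisin invariant. Reducing mod $u$ (base change along $\mS\to W$, $u\mapsto 0$) should recover $\TP(\sT_k;\Z_p)$ together with the Frobenius, using $\THH(-/\bS[z])\otimes_{\bS[z]}\bS\simeq\THH(-_k)$ and comparing with Petrov--Vologodsky's non-commutative crystalline cohomology of $\sT_k$; specializing $u\mapsto\pi$ should recover $\TP(\sT_{\sO_K};\Z_p)$. The second, harder, main step is the generic-fiber comparison. Here I would first invoke rigidity of $K(1)$-local $K$-theory (Clausen--Mathew--Morrow, Bhatt--Clausen--Mathew) to replace $\LK K(\sT_\sC)$ by $\LK K(\sT_{\sO_\sC})$, then use the cyclotomic trace together with Dundas--Goodwillie--McCarthy and the $K(1)$-local identification of \'etale $K$-theory with Selmer $\TC$ to express $\pi_*\LK K(\sT_{\sO_\sC})\otimes\Z_p$ through the motivic/syntomic filtration on $\TC(\sT_{\sO_\sC};\Z_p)$, whose associated graded is built from $\TP(\sT_{\sO_\sC};\Z_p)$ and $\TCn(\sT_{\sO_\sC};\Z_p)$. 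Base changing the Breuil--Kisin invariant along $\mS\to\Ainf$ and comparing with $\THH(\sT_{\sO_\sC};\Z_p)$ over the perfectoid ring $\sO_\sC$ should then give, after inverting $p$ and passing to $\Bcry$, a $G_K$-equivariant identification of the two sides. This is precisely where the K\"unneth hypothesis for $\LK K$ enters: it lets one reduce the comparison statement (an equivalence of filtered $E_\infty$-module objects) to the already-known commutative case over $\sO_K$ by a d\'evissage on $\sT$, since the comparison map is a map of lax symmetric monoidal functors and one only needs it to be an equivalence on generators.

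Granting these two steps, the conclusion is formal: base change of the Breuil--Kisin comparison along $\mS\to\Bcry$ yields
\[
\pi_i\TP(\sT_k;\Z_p)\otimes_W\Bcry\;\simeq\;\pi_i\LK K(\sT_\sC)\otimes_{\Z_p}\Bcry,
\]
with $G_K$ acting trivially on the left and through $\Bcry$; compatibility with Frobenius comes from the cyclotomic Frobenius on $\THH$ together with the Frobenius on $\Bcry$, and compatibility with filtrations from the Nygaard filtration on $\TCn$ matching the Hodge filtration under the de Rham specialization of $\Bcry$. Finally, since $\pi_i\TP(\sT_k;\Z_p)[1/p]$ is a finite $K_0$-vector space and $\Bcry^{G_K}=K_0$, taking $G_K$-invariants gives $\dim_{K_0}D_{\cry}(\pi_i\LK K(\sT_\sC)\otimes\Q_p)=\dim_{\Q_p}\pi_i\LK K(\sT_\sC)\otimes\Q_p$, i.e.\ the representation is crystalline.

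The main obstacle is the generic-fiber step: showing non-commutatively that $\LK K(\sT_\sC)$ is computed by a $\TP$/$\TCn$-built syntomic complex and that this matches the $\Ainf$-base change of the Breuil--Kisin invariant with the correct $G_K$-action \emph{and} filtration. In the commutative case this rests on the \'etale comparison theorem for $A\Omega$, Beilinson--Lichtenbaum, and Fontaine--Messing; there is no non-commutative \'etale site, so the only available route is to propagate these statements from $\sO_K$-algebras to general $\sT$ via K\"unneth and d\'evissage, and making that d\'evissage compatible with Frobenius, filtration, and Galois action simultaneously is the delicate point. A secondary difficulty is pinning down the $u\mapsto 0$ specialization as Petrov--Vologodsky crystalline cohomology with the correct Frobenius, rather than merely up to a Frobenius-semilinear twist.
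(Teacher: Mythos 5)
The statement you were asked to address is Conjecture~\ref{NCcryconj} in the paper, and the paper does \emph{not} prove it. In fact the author devotes a Remark after Theorem~\ref{main} to explaining precisely why the route you propose cannot close. Your plan is in essence to build the non-commutative Breuil--Kisin module $\pi_i\TCn(\sT/\bS[z];\Z_p)$, prove the $K(1)$-local $K$-theory comparison after base change to $\Ainfmu^{\wedge}_p$ and the $\TP(\sT_k;\Z_p)$-comparison after base change to $W$, and then deduce the $G_K$-, $\varphi$-, and filtration-compatible $\Bcry$-comparison ``by base change along $\mS\to\Bcry$''. This is exactly the trajectory of the paper's Theorem~\ref{NCBMS}, but the final gluing step is where the argument collapses, and not for the reason you flag.

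In the commutative BMS argument, the crystalline comparison is obtained by gluing two canonical $(G_K,\varphi)$-equivariant isomorphisms inside $\Acry[\frac{1}{p\mu}]$: the \'etale comparison $R\Gamma_{\et}(X_\Cu,\Z_p)\otimes\Ainfmu^{\wedge}_p\simeq R\Gamma_{\Ainf}(X_{\sO_\Cu})\otimes\Ainfmu^{\wedge}_p$ and the absolute crystalline comparison $R\Gamma_{\Ainf}(X_{\sO_\Cu})\otimes_{\Ainf}\Acry\simeq R\Gamma_{\cry}(X_{\sO_\Cu/p}/\Acry)$. The second of these has no usable non-commutative analogue: the only comparison available from $\TP$ replaces $\Acry$ by its Nygaard completion $\widehat{\Acry}$, i.e.\ $\pi_i\TP(\sT_{\sO_\Cu};\Z_p)\otimes_{\Ainf}\widehat{\Acry}\simeq\pi_i\TP(\sT_{\sO_\Cu/p};\Z_p)$, and $\mu=0$ in $\widehat{\Acry}$. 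Inverting $p\mu$ therefore kills both sides, so there is no common ring in which the \'etale/$K$-theory lattice and the crystalline lattice can be compared equivariantly. This is the obstruction the author isolates, and it is orthogonal to the ``syntomic complex'' concern you raise. What the paper is actually able to establish (Corollary~\ref{Bcrycor}) is the abstract isomorphism $\pi_i\TP(\sT_k;\Z_p)\otimes_W\Bcry\simeq\pi_i\LK K(\sT_\Cu)\otimes_{\Z_p}\Bcry$ of $\Bcry$-modules, obtained from the Theorem~\ref{NCBMS} specializations plus the \emph{noncanonical} descent statement Lemma~\ref{BKlemma}; it expressly does \emph{not} carry the $G_K$-, $\varphi$-, or filtration-compatibility the conjecture demands. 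The best Galois-theoretic output the author can extract with the Breuil--Kisin approach is semi-stability, not crystallinity: Theorem~\ref{mainwithproof} routes through Gao's Breuil--Kisin $G_K$-modules instead of through an $\Acry$-comparison, and this only produces semi-stable representations. So your proposal outlines a proof of a statement the paper states as an open conjecture, and the step you treat as formal (``base change along $\mS\to\Bcry$'' retaining all structures) is exactly the step known to fail by the $\mu=0$ computation in $\widehat{\Acry}$.
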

This paper approaches this conjecture via $K$-theoritical version of Bhatt-Morrow-Scholze's comparison theorem \cite{BMS2}. We will use the language of stable $\infty$-categories, following Lurie \cite{Lurie17}.
\begin{defn}
 For an $\mathbb{E}_\infty$-ring $R$, we let $\Cat (R)$ denote the $\infty$-category of $R$-linear idempotent-complete, small stable $\infty$-categories, where the morphisms are exact functors. 
\end{defn}

\begin{defn}
For an $\mathbb{E}_\infty$-ring $R$, we let $\Catsat (R)$ denote the $\infty$-category of $R$-linear idempotent-complete, small smooth proper stable $\infty$-categories, where the morphisms are exact functors. 
\end{defn}
\begin{defn}
  For a commutative ring $R$, we let $\Catcsat (R)$ denote the $\infty$-category of $R$-linear idempotent-complete, small smooth proper stable $\infty$-categories of perfect complex on smooth proper varieties over $R$, where the morphisms are exact functors.
\end{defn}
\begin{remark}
For an $\mathbb{E}_\infty$-ring $R$, $\Cat (R)$ acquires the structure of a symmetric monoidal $\infty$-category (see \cite[Appendix D.2.2]{Lurie17}), and $\Catsat (R)$ also acquires the structure of a symmetric monoidal $\infty$-category. For a smooth proper variety $X$ over a 
 commutative ring $R$, the smooth proper stable $\infty$-category $\perf(X)$ is self-dual in $\Catcsat (R)$ (see \cite[Corollary 9.4.3.4.]{Lurie17}), and $\Catcsat (R)$ acquires the structure of a symmetric monoidal $\infty$-category (see \cite[Corollary 9.4.3.8]{Lurie17}).
\end{remark}
To approach the non-commutative version of the crystalline conjecture, We will be interested in the following conjecture, which closely relates to conjecture~\ref{lattice}. 
\begin{conj}\label{kunnethK1}
A lax symmetric monoidal $\infty$-functor
\[
L_{K(1)}K(-):\Catsat(\sC) \to {\operatorname{Mod}}_{L_{K(1)}K(\sC)}(\Sp)
\]
is actually symmetric monoidal, i.e. for any $\sC$-linear idempotent-complete, small smooth proper stable $\infty$-categories $\sT_1,\sT_2$, the natural map 
\[
L_{K(1)}K(\sT_1) \otimes_{L_{K(1)}K(\sC)} L_{K(1)}K(\sT_2) \to L_{K(1)}K(\sT_1\otimes_\sC \sT_2)
\]
is an equivalence.
\end{conj}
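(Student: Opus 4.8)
\emph{Strategy.} The plan is to reduce Conjecture~\ref{kunnethK1} to a duality statement in $KU^{\wedge}_p$-modules, settle that statement in the geometric range, and thereby locate the essential difficulty. By Suslin rigidity $K(\sC)^{\wedge}_p\simeq ku^{\wedge}_p$, so after inverting the Bott class $\LK K(\sC)\simeq KU^{\wedge}_p$ and ${\operatorname{Mod}}_{\LK K(\sC)}(\Sp)\simeq{\operatorname{Mod}}_{KU^{\wedge}_p}$, a symmetric monoidal $\infty$-category generated under colimits by its $2$-periodic unit; moreover $\LK K(-)$ is normalized, carrying the monoidal unit $\perf(\sC)$ of $\Catsat(\sC)$ to the unit $KU^{\wedge}_p$. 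The key reduction is that it suffices to prove: for every $\sT\in\Catsat(\sC)$ the object $\LK K(\sT_\sC)$ is dualizable in ${\operatorname{Mod}}_{KU^{\wedge}_p}$ with dual $\LK K((\sT^{\vee})_\sC)$, compatibly with the images under $\LK K(-)$ of the evaluation and coevaluation that witness the (Serre-twisted) self-duality of $\sT$ in $\Cat(\sC)$ (cf.\ \cite[\S 9.4.3]{Lurie17}). Granting this, a formal zig-zag argument using only the coherences of the lax monoidal structure shows that the structure map $\LK K(\sT_{1,\sC})\otimes_{KU^{\wedge}_p}\LK K(\sT_{2,\sC})\to\LK K((\sT_1\otimes_\sC\sT_2)_\sC)$ is an equivalence as soon as \emph{one} of the two factors has this property; so it is enough to verify the duality-preservation.

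I would first verify it for the geometric subcategory $\Catcsat(\sC)$, which is stable under $\otimes_\sC$ because $\perf(X)\otimes_\sC\perf(Y)\simeq\perf(X\times_\sC Y)$ for smooth proper $X,Y$ over $\sC$. For $\sT=\perf(X)$, the étale-descent description of $K(1)$-local $K$-theory underlying Thomason's spectral sequence~\eqref{bakido} gives $\LK K(X)\simeq R\Gamma_\et(X_\sC,KU^{\wedge}_p)$, the continuous étale cohomology of $X_\sC$ with constant coefficients; this is perfect over $KU^{\wedge}_p$ since $X_\sC$ has finite étale cohomological dimension and finite-type cohomology, and Poincaré duality for $X_\sC$ supplies the self-duality datum (matching Serre duality under the comparison). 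Combined with the reduction above, this proves the Künneth equivalence whenever \emph{one} factor is of geometric origin, hence in particular Conjecture~\ref{kunnethK1} on $\Catcsat(\sC)$.

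For arbitrary $\sT\in\Catsat(\sC)$ one must establish the same duality-preservation for categories not of geometric origin, and this is the main obstacle. The natural tool is the cyclotomic trace together with the motivic/BMS-type filtrations of \cite{BMS2} and \cite{Thomason} developed in this paper, which express $\LK K(\sT_\sC)$ through $\TP$, $\TCn$ and topological Hochschild homology. On the periodic-cyclic side the analogous statement is at hand: $\HH$ is symmetric monoidal, and smoothness and properness make the $S^1$-Tate construction finite, so $\HP(-/\sC)$ is symmetric monoidal on $\Catsat(\sC)$ and a fortiori preserves duals. The difficulty — and the reason this is posed as a conjecture rather than a theorem — is to transport this along the cyclotomic trace, i.e.\ to control the \emph{integral} ($p$-adic) discrepancy between $\LK K$ and its topological counterpart for smooth proper categories that are not $\perf(X)$, where the finiteness of étale cohomology exploited in the geometric case is no longer available. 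This is precisely the $K(1)$-local shadow of Blanc's lattice conjecture~\ref{lattice}, and any proof of Conjecture~\ref{kunnethK1} must confront it.
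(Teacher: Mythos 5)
The statement you are addressing is labelled \texttt{conj} in the paper and is never proved there: it is Conjecture~\ref{kunnethK1}, one of the running assumptions of the whole paper. The only fragment the paper actually establishes is Proposition~\ref{commkunnethK1}, the K\"unneth equivalence for objects of geometric origin $\perf(X_1),\perf(X_2)$ (and under the extra hypothesis $K/\Q_p$ finite). So there is no ``paper's own proof'' of the full statement to compare against, and your proposal is right to treat the general case as open rather than claim a proof.

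Where a comparison is meaningful is on the geometric case, and there your route is genuinely different from the paper's. The paper's proof of Proposition~\ref{commkunnethK1} first identifies $\sC\cong\bC$ abstractly (this is why it needs $K/\Q_p$ finite, to control the cardinality of $\sC$), transports $\LK K(X_i)$ to $K_{\mathrm{top}}(X_{i,\bC}^{\mathrm{an}})^{\wedge}_p$ via Thomason, and then invokes Atiyah's K\"unneth theorem for topological K-theory of finite CW complexes. You instead stay over $\sC$ and argue through \'etale descent, $\LK K(X)\simeq R\Gamma_{\et}(X_\sC,KU^{\wedge}_p)$, establishing perfectness of this $KU^{\wedge}_p$-module from finiteness of \'etale cohomology and then feeding \'etale Poincar\'e duality into the ``lax monoidal functor sends dual data to dual data $\Rightarrow$ K\"unneth against that object'' lemma. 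Your version has the advantage of not needing the abstract field isomorphism or the finiteness of $K/\Q_p$; its cost is that the compatibility you invoke between \'etale Poincar\'e duality on the $KU$-side and the internal Serre-twisted self-duality of $\perf(X)$ in $\Catsat(\sC)$ is asserted rather than argued and would require real work (it is the \'etale analogue of the Hirzebruch--Riemann--Roch compatibility). The paper's route via Atiyah sidesteps that compatibility entirely. Both are correct strategies for the commutative case.

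One small caveat on the reduction step: in $\Catsat(\sC)$ \emph{every} object is dualizable, so ``reducing to the statement that $\LK K(-)$ preserves duality data'' is not a weakening of Conjecture~\ref{kunnethK1} but a reformulation equivalent to it (a lax symmetric monoidal functor is strong monoidal on a rigid source iff it preserves all duality data). That is fine as bookkeeping, and it does localize the difficulty usefully, but it should not be presented as having reduced the conjecture to something smaller. Your diagnosis of the remaining obstruction --- controlling the cyclotomic trace from $\LK K$ to $\TP$/$\TCn$ for noncommutative smooth proper $\sT$ in the absence of \'etale finiteness, and the analogy with Blanc's lattice Conjecture~\ref{lattice} --- matches the paper's own framing of why this is left as a conjecture.
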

 The conjecture holds for commutative algebraic varieties when $K/\Q_p$ is a finite extension.
\begin{prop}\label{commkunnethK1}
  We assume $K/\Q_p$ is a finite extension. For smooth proper varieties $X_1,X_2$ over $\sC$, the natural map 
\[
L_{K(1)}K(X_1) \otimes_{L_{K(1)}K(\sC)} L_{K(1)}K(X_2) \to L_{K(1)}K(X_1\times_{\Spec \sC} X_2)
\]
is an equivalence.
\end{prop}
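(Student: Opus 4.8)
The plan is to deduce the statement from the K\"unneth formula for $p$-adic \'etale cohomology, using Thomason's comparison between $K(1)$-local $K$-theory and \'etale $K$-theory together with rigidity over the algebraically closed field $\sC$.

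First I would record, following \cite{Thomason}, that for a smooth $\sC$-scheme $Y$ the spectrum $\LK K(Y)$ is the \'etale hypercohomology $R\Gamma_{\et}(Y,\sF_Y)$ of the \'etale sheafification $\sF_Y$ of $Y'\mapsto\LK K(Y')$ on the small \'etale site of $Y$, the descent spectral sequence being \eqref{bakido}. Since $\sC$ is algebraically closed of characteristic $0$, Suslin rigidity for $K$-theory with finite coefficients identifies the stalks of $\sF_Y$ with $\LK K(\sC)$, and one has $\pi_*\LK K(\sC)\cong\Z_p[\beta^{\pm1}]$ with $|\beta|=2$; hence $\LK K(\sC)\simeq KU_p$, the sheaf $\sF_Y$ is the constant sheaf of spectra $\ul{KU_p}$, and $\LK K(Y)\simeq R\Gamma_{\et}(Y,\ul{KU_p})$, naturally and multiplicatively in $Y$. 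The hypothesis that $K/\Q_p$ is finite enters here, through the finiteness of $\pi_*\LK K(\sC)$ over $\Z_p$ and the computation $\LK K(\sC)\simeq KU_p$; combined with $\mathrm{cd}_p(X)\le 2\dim X$ and the finiteness of $H^*_{\et}(X,\Z/p)$ for a proper $\sC$-scheme $X$, it guarantees that the descent and K\"unneth spectral sequences below converge and that $\LK K(X)$ is a perfect $KU_p$-module.

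Next, write $p_i\colon X_1\times_{\Spec\sC}X_2\to X_i$ for the projections. The morphism $p_1$ is smooth and proper and, crucially, it is the base change of $X_2\to\Spec\sC$ along $X_1\to\Spec\sC$, so smooth and proper base change for torsion coefficients shows that $R^qp_{1*}\ul{\Z/p^n}$ is the constant sheaf $\ul{H^q_{\et}(X_2,\Z/p^n)}$ on $X_1$; passing to the limit over $n$ and $KU_p$-linearizing gives $Rp_{1*}\ul{KU_p}\simeq\ul{R\Gamma_{\et}(X_2,\ul{KU_p})}$. Applying $R\Gamma_{\et}(X_1,-)$, and using that $R\Gamma_{\et}(X_1,\ul E)\simeq R\Gamma_{\et}(X_1,\ul{KU_p})\otimes_{KU_p}E$ for a perfect $KU_p$-module $E$ (again by the finiteness of the \'etale cohomology of the proper scheme $X_1$), one obtains
\[
R\Gamma_{\et}(X_1,\ul{KU_p})\otimes_{KU_p}R\Gamma_{\et}(X_2,\ul{KU_p})\;\iso\;R\Gamma_{\et}\!\left(X_1\times_{\Spec\sC}X_2,\ul{KU_p}\right).
\]
By the previous step, and by the multiplicativity of Thomason's descent equivalence, this is precisely the canonical comparison map $\LK K(X_1)\otimes_{\LK K(\sC)}\LK K(X_2)\to\LK K(X_1\times_{\Spec\sC}X_2)$, which is therefore an equivalence.

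The hard part is making the middle step legitimate at the level of sheaves of spectra rather than of $\Z_p$-complexes: that \'etale-sheafified $\LK K$ is genuinely the constant sheaf $\ul{KU_p}$ over $\sC$ (rigidity, with the Bott element pinned down), and that $Rp_{1*}$ and the external product behave as in the torsion-coefficient case — which rests on the constructibility and finiteness afforded by $X_1,X_2$ being smooth and proper over the algebraically closed $\sC$, ensuring the relevant spectral sequences converge. Granting this, the equivalence reduces on $E_2$-pages to the classical K\"unneth isomorphism $R\Gamma_{\et}(X_1\times_\sC X_2,\Z_p)\simeq R\Gamma_{\et}(X_1,\Z_p)\otimes^L_{\Z_p}R\Gamma_{\et}(X_2,\Z_p)$ for proper varieties over a separably closed field. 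Alternatively one can avoid the \'etale site: rigidity identifies $\LK K(X_{i,\sC})$ with $\LK K(X_{i,\bC})$ for a model of $X_i$ over $\ol\Q$, hence with the $p$-completed topological $K$-theory $KU_p\wedge_{KU}KU(X_i(\bC))$ of the analytification, and one then invokes the K\"unneth formula for $KU$ on the finite complexes $X_i(\bC)$.
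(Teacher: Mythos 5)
Your proposal is correct, and the main route is genuinely different from the paper's. The paper fixes an abstract field isomorphism $\sigma\colon\sC\isom\bC$ (available because $K/\Q_p$ is finite, so $\sC$ has the cardinality and transcendence degree of $\bC$), invokes Thomason's comparison $\LK K(X_i)\simeq K_{\operatorname{top}}(X_{i,\bC}^{\mathrm{an}})^{\wedge}_p$, and reduces the statement to Atiyah's K\"unneth theorem for topological $K$-theory of finite CW complexes. Your main argument never leaves algebraic geometry: it identifies $\LK K$ on smooth $\sC$-schemes with \'etale hypercohomology of the constant sheaf of spectra $\ul{KU_p}$ (Thomason descent plus Suslin--Gabber rigidity over the algebraically closed $\sC$) and then deduces the K\"unneth equivalence from proper base change and finiteness of \'etale cohomology with torsion coefficients. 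What each approach buys: the paper's is short and directly quotable once $\sC\simeq\bC$ is granted; yours, once the sheaf-of-spectra details are filled in carefully, would actually eliminate the hypothesis that $K/\Q_p$ is finite, since the inputs it uses --- rigidity over $\sC$, finite mod-$p$ cohomological dimension of proper $\sC$-varieties, finiteness of their mod-$p$ \'etale cohomology --- hold for any algebraically closed field of characteristic $0$, not only for those abstractly isomorphic to $\bC$. The alternative you sketch in the final sentence (rigidity to descend to a model over $\ol{\Q}$, then compare with $\bC$-points) is in substance the paper's own proof.

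One small misattribution worth flagging: you locate the use of the hypothesis ``$K/\Q_p$ finite'' in the computation $\LK K(\sC)\simeq KU_p$ and the finiteness of $\pi_*\LK K(\sC)$ over $\Z_p$. In fact these hold for \emph{every} algebraically closed field of characteristic $0$ with no restriction on $K$; the finiteness hypothesis is needed only for the paper's transcendental shortcut $\sC\simeq\bC$, which your main argument avoids. Also be sure, when upgrading the proper-base-change step from torsion complexes to sheaves of $KU_p$-module spectra, that you argue via the canonical base-change transformation $p_1^*R\pi_{2*}\to Rp_{1*}p_2^*$ for the cartesian square over $\Spec\sC$ (rather than merely comparing stalks), so that the identification $Rp_{1*}\ul{KU_p}\simeq\ul{R\Gamma_{\et}(X_2,\ul{KU_p})}$ is produced canonically and multiplicatively; this is what makes the resulting equivalence agree with the lax monoidal comparison map $\LK K(X_1)\otimes_{\LK K(\sC)}\LK K(X_2)\to\LK K(X_1\times_{\Spec\sC}X_2)$ in the statement.
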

\begin{proof}
We fix an isomorphism of fields $\sigma:\sC \simeq \bC$. We denote $X_{i}\times_{\Spec \sC} \Spec \bC$ by $X_{i,\bC}$. Due to Thomason \cite{Thomason}, we have an equivalence $L_{K(1)}K(X_i)\simeq K_{\text{top}}(X_{i,\bC}^{an})^{\wedge}_p$. Due to Atiyah \cite{Atiyah}, since a CW complex which comes from a compact complex manifold is finite, we know a natural map
\[
K_{\text{top}}(X_{1,,\bC}^{an}) \otimes_{K_{\text{top}}(\text{pt})} K_{\text{top}}(X_{2,\bC}^{an}) \to K_{\text{top}}(X_{1,\bC}^{an}\times X_{2,\bC}^{an})
\]
is an equivalence. The isomorphism of fields $\sigma:\sC \simeq \bC$ induces an equivalence of ring spectra $K_{\text{top}}(\text{pt})^{\wedge}_p \simeq \LK K(\bC)$. We have a diagram
\[
\xymatrix{
L_{K(1)}K(X_1) \otimes_{L_{K(1)}K(\sC)} L_{K(1)}K(X_2) \ar[d]_-{\simeq} \ar[r]&L_{K(1)}K(X_1\times_{\Spec \sC} X_2)\ar[d]^{\simeq}\\
K_{\text{top}}(X_{1,,\bC}^{an}) \otimes_{K_{\text{top}}(\text{pt})} K_{\text{top}}(X_{2,\bC}^{an})\ar[r]_-{\simeq}&K_{\text{top}}(X_{1,\bC}^{an}\times X_{2,\bC}^{an})
}
\]
and we obtain the claim.
\end{proof}

 Fix a sequence of elements $\zeta_n\in K$ inductively such that $\zeta_0=1$ and $(\zeta_{n+1})^p=\zeta_n$, let $\epsilon=(\zeta_0,\zeta_1,\zeta_2,...)\in \lim_{\text{Frob}}\sO_\Cu/p$, and let $\epsilon=(\zeta_0,\zeta_1,\zeta_2,...)\in \lim_{\text{Frob}}\sO_\Cu/p$, and let $[\epsilon]\in \Ainf$ be the Teichm\"uller lift. Let $\mS=W[[z]]$, and let $\tilde{\theta}:\mS \to \sO_K$ be the usual map whose kernel is generated by Eisenstein polynomial $E$ of $\pi$. We choose $\xi \in \Ainf$ of the kernel of the canonical map $\Ainf\to \sO_\sC$. We write $\mu=[\epsilon]-1$. Let $\varphi:\mS \to \mS$ be a Frobenius endomorphism which is Frobenius on $W$ and sends $z$ to $z^p$. In this paper, we prove the following.

\begin{thm}[{Theorem~\ref{NCBMS}}, {Non-commutative version of \cite{BMS2}}]\label{NCBMSintro}
  Let $\sT$ be an $\sO_K$-linear idempotent-complete, small smooth proper stable $\infty$-category. Then there is a natural number $n$ satisfying the following holds:
 \begin{itemize}
   \item[(1)] For any $i\geq n$, $\pi_i \TCn(\sT/\bS[z];\Z_p)$ is a Breuil-Kisin module.
   \item[(2)] (K(1)-local K theory comparison) Assume Conjecture \ref{kunnethK1}. For any $i\geq n$, after scalar extension along $\ol{\phi}:\mS\to \Ainf$ which sends $z$ to $[\pi^{\flat}]^p$ and is the Frobenius on $W$, one recovers K(1)-local K theory of the generic fiber
   \[
   \pi_i \TCn(\sT/\bS[z];\Z_p) \otimes_{\mS,\ol{\phi}} \Ainf[\frac{1}{\mu}]^{\wedge}_p \simeq \pi_i\LK K(\sT_{\Cu})\otimes_{\Z_p} \Ainf[\frac{1}{\mu}]^{\wedge}_p
   \]
   \item[(3)] (topological periodic homology theory comparison) For any $i\geq n$, after scalar extension along the map $\tilde{\phi}:\mS\to W$ which is the Frobenius on $W$ and sends $z$ to $0$, one recovers topological periodic homology theory of the special fiber:
  \[
   \pi_i \TCn(\sT/\bS[z];\Z_p)[\frac{1}{p}] \otimes_{\mS[\frac{1}{p}],\tilde{\phi}} K_0 \simeq \pi_i \TP(\sT_k;\Z_p)[\frac{1}{p}].
   \]
   \item[(4)] If there is a smooth proper variety $X$ such that $\sT=\perf(X)$, then for amy $i\geq n$, there is an isomorphism
   \[
    \pi_i \TCn(X/\bS[z];\Z_p) \otimes_{\mS,\ol{\phi}} \Ainf[\frac{1}{\mu}]^{\wedge}_p \simeq \pi_i\LK K(X_{\Cu})\otimes_{\Z_p} \Ainf[\frac{1}{\mu}]^{\wedge}_p
  \]
  \end{itemize}
\end{thm}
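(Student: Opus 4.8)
The plan is to reduce all four statements to the commutative theorem of \cite{BMS2} applied to $\sO_K$, using two structural facts: a smooth proper stable $\infty$-category is a dualizable object, and $\THH$ is a strong symmetric monoidal functor. Put $A:=\THH(\sO_K/\bS[z];\Z_p)$, an $\mathbb{E}_\infty$-ring with $S^1$-action, where $\bS[z]\to\sO_K$ classifies a chosen lift of $\pi$, and for $\sT\in\Catsat(\sO_K)$ let $\THH(\sT/\bS[z];\Z_p)$ be the $p$-completed relative $\THH$, an $A$-module with $S^1$-action. Since $\sT$ is smooth and proper over $\sO_K$ it is a dualizable object of $\Cat(\sO_K)$, so, as $\THH(-/\bS[z];\Z_p)$ is symmetric monoidal, $\THH(\sT/\bS[z];\Z_p)$ is a perfect, hence dualizable, $A$-module. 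By \cite{BMS2} the homotopy of $\TCn(\sO_K/\bS[z];\Z_p)=A^{hS^1}$ and $\TP(\sO_K/\bS[z];\Z_p)=A^{tS^1}$ is, after the relevant completion, the Breuil-Kisin ring $\mS=W[[z]]$ in even degrees with its Frobenius $\varphi$; the maps $\ol{\phi}$, $\tilde{\phi}$ of the theorem refine respectively the $\Ainf$-realization and the crystalline specialization of $\mS$, and one uses that $\ol{\phi}(E)$ becomes a unit in $\Ainf[1/\mu]^{\wedge}_p$ (it agrees up to a unit with $\tilde{\xi}=\varphi(\mu)/\mu$, a unit there) while $\tilde{\phi}(E)=E(0)$ is $p$ times a unit.

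For (1): first propagate the dualizability of $\THH(\sT/\bS[z];\Z_p)$ over $A$, after $p$-completion, to dualizability of $\TCn(\sT/\bS[z];\Z_p)$ over $\TCn(\sO_K/\bS[z];\Z_p)$ and of $\TP(\sT/\bS[z];\Z_p)$ over $\TP(\sO_K/\bS[z];\Z_p)$, the non-commutative analogue of the perfectness results of \cite{BMS2}; the point requiring care is that $(-)^{hS^1}$ does not commute with tensor products, so one must use $p$-completeness and the boundedness of the motivic graded pieces to control the homotopy-fixed-point and Tate spectral sequences. In particular the non-commutative prismatic cohomology attached to $\sT$ --- the perfect $\mS$-complex underlying the motivic graded object of $\TP(\sT/\bS[z];\Z_p)$, cf.\ \cite{petrov} --- is bounded, and the motivic/Nygaard spectral sequence yields an $n$ such that for $i\ge n$ the group $\pi_i\TCn(\sT/\bS[z];\Z_p)$ is finitely generated over $\mS$, assembled from finitely many Frobenius twists of that complex without completion or extension interference. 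Finally the cyclotomic Frobenius gives $\varphi\colon\TCn(\sT/\bS[z];\Z_p)\to\TP(\sT/\bS[z];\Z_p)$, which --- as a module-theoretic consequence of the corresponding fact for $\sO_K$ --- becomes an equivalence after inverting the class of $E$; thus $\varphi$ induces an isomorphism $\varphi^{*}\bigl(\pi_i\TCn(\sT/\bS[z];\Z_p)\bigr)[1/E]\cong\pi_i\TCn(\sT/\bS[z];\Z_p)[1/E]$, which together with finite generation exhibits $\pi_i\TCn(\sT/\bS[z];\Z_p)$ as a Breuil-Kisin module for $i\ge n$.

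Statements (3) and (4) are the specializations of (1) along $\tilde{\phi}$ and $\ol{\phi}$, each obtained by combining base change for relative $\THH$ with the matching commutative comparison. For (3): base change of $\sO_K$-linear categories along $\sO_K\to k$, together with $z\mapsto 0$, identifies the corresponding base change of $\THH(\sT/\bS[z];\Z_p)$ with $\THH$ of the (again smooth proper) special fiber $\sT_k=\sT\otimes_{\sO_K}k$; since $\tilde{\phi}(E)=E(0)$ is invertible after inverting $p$, $\TCn$ and $\TP$ agree there, and passing to the Tate construction yields $\TP(\sT_k;\Z_p)[1/p]$, which is the crystalline specialization of \cite{BMS2}. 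For (4): with $\sT=\perf(X)$, base change of $\TCn(X/\bS[z];\Z_p)$ along $\ol{\phi}$ followed by inverting $\mu$ and $p$-completing is its $\Ainf$-realization; by \cite{BMS2} (via the $\Ainf$-cohomology \'etale comparison and \cite{Falcrys}) its motivic graded pieces become those of $R\Gamma_{\et}(X_\Cu,\Z_p)\otimes_{\Z_p}\Ainf[1/\mu]^{\wedge}_p$, which by Thomason's spectral sequence \eqref{bakido} and the identifications \eqref{even02}, \eqref{odd03} --- with the Tate twists trivialized over $\Ainf[1/\mu]^{\wedge}_p$ --- assemble to $\pi_i\LK K(X_\Cu)\otimes_{\Z_p}\Ainf[1/\mu]^{\wedge}_p$. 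This is unconditional and in particular settles the unit $\sT=\perf(\Spec\sO_K)$.

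For (2), work with spectra rather than homotopy groups. From $\TCn(-/\bS[z];\Z_p)$, base change along $\ol{\phi}$, invert $\mu$, $p$-complete, and then use the identification of $K(1)$-local $K$-theory of the generic fiber with topological cyclic homology (by continuity and the Dundas-Goodwillie-McCarthy / henselian-pair theorems) to build a natural map of $\Ainf[1/\mu]^{\wedge}_p$-modules $\TCn(\sT/\bS[z];\Z_p)\otimes_{\mS,\ol{\phi}}\Ainf[1/\mu]^{\wedge}_p\to\LK K(\sT_\Cu)\otimes_{\Z_p}\Ainf[1/\mu]^{\wedge}_p$, natural in $\sT\in\Catsat(\sO_K)$. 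After this base change the source agrees with the same construction applied to $\TP(\sT/\bS[z];\Z_p)$ (since $E$ has been inverted), which is strong symmetric monoidal in $\sT$ by the Tate projection formula against the dualizable module $\THH(\sT/\bS[z];\Z_p)$; the target is strong symmetric monoidal precisely by Conjecture~\ref{kunnethK1}; and the map is an equivalence on the unit by (4) for $\Spec\sO_K$. Since every object of $\Catsat(\sO_K)$ is dualizable, $\Catsat(\sO_K)$ is rigid, and a symmetric monoidal natural transformation between strong symmetric monoidal functors out of a rigid symmetric monoidal $\infty$-category is automatically an equivalence (the component at a dualizable $x$ is inverted by the component at $x^{\vee}$ through the duality data); taking $\pi_i$ for $i\ge n$ gives (2). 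The main obstacle is the generic-fiber comparison underlying (4) --- equivalently, constructing the map in (2): in the commutative case this is the delicate $\Ainf$-/\'etale comparison of \cite{BMS2} and \cite{Falcrys}, and for a general stable $\infty$-category no \'etale cohomology is available as a target, which is precisely why (2) must be made conditional on Conjecture~\ref{kunnethK1}; a secondary difficulty is propagating dualizability through the $S^1$-fixed-point and Tate constructions of (1) and tracking the Nygaard/$E$-adic completions.
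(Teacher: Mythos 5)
Your overall strategy --- reduce everything to the commutative case by combining dualizability of $\sT$ in $\Cat(\sO_K)$, strong symmetric monoidality of $\THH(-/\bS[z];\Z_p)$, and the principle (Proposition~\ref{compmonoid}) that a symmetric monoidal natural transformation between symmetric monoidal functors out of a rigid symmetric monoidal $\infty$-category is automatically an equivalence --- is exactly the engine running Propositions~\ref{TPTCkunneth}, \ref{compmonoid} and the downstream theorems in the paper, so the core idea is right. But your execution of (1) has a genuine gap. You obtain finite generation of $\pi_i\TCn(\sT/\bS[z];\Z_p)$ via ``boundedness of the motivic graded pieces'' and a ``non-commutative prismatic cohomology attached to $\sT$'' with a motivic/Nygaard spectral sequence. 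No such filtration or prismatic complex is constructed in this paper (or available from \cite{petrov}, which is crystalline over $\mathbb{F}_p$, not Breuil--Kisin over $\mS$) for a general $\sT\in\Catsat(\sO_K)$; positing it is an unproven intermediate step. The paper avoids it entirely: since $\pi_*\THH(\sO_K/\bS[z];\Z_p)\cong\sO_K[u]$ is even and regular noetherian, $\THH(\sT/\bS[z];\Z_p)$ is perfect (Proposition~\ref{propdualisperf}), the \emph{lax} monoidal exact functors $(-)^{hS^1}$ and $(-)^{tS^1}$ then preserve perfectness with no spectral-sequence control at all (Proposition~\ref{TPTCkunneth}), finite generation of $\pi_*\TCn(\sT/\bS[z];\Z_p)$ over $\mS[u,v]/(uv-E)$ is immediate, and eventual $u$-periodicity is Lemma~\ref{periodlemma}. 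So the ``point requiring care'' you flag about $(-)^{hS^1}$ not commuting with tensor products simply does not arise here.

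Parts (3) and (4) are also argued along routes the paper does not take. For (4) you propagate through $R\Gamma_{\Ainf}$, the \'etale comparison of \cite{BMS1}, \cite{Falcrys}, and Thomason's spectral sequence; the paper instead fixes an abstract isomorphism $\Cu\cong\bC$, uses Thomason's $L_{K(1)}K(X)\simeq K_{\mathrm{top}}(X_\bC^{an})^{\wedge}_p$, Atiyah's K\"unneth for compact spaces (Proposition~\ref{commkunnethK1}), and the cyclotomic trace together with Theorem~\ref{TPTCcomp} (Theorem~\ref{KTPcompcommutative}), thus never touching $R\Gamma_{\Ainf}$ or any motivic filtration degeneration; if you want to run your étale route you would need to verify that degeneration explicitly, which you do not. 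For (3), ``since $\tilde\phi(E)=E(0)$ is invertible after inverting $p$, $\TCn$ and $\TP$ agree there'' elides the Frobenius twist built into $\tilde\phi$ and papers over the actual argument: the paper compares along the composite $\TCn(\sT/\bS[z];\Z_p)\to\TCn(\sT_k;\Z_p)\xrightarrow{\varphi^{hS^1}_k}\TP(\sT_k;\Z_p)$ and uses the flatness of $\pi_*\TP(\sT/\bS[z];\Z_p)[1/p]$ over $\pi_*\TP(\sO_K/\bS[z];\Z_p)[1/p]$ coming from finite freeness of the Breuil--Kisin modules after inverting $p$ (Theorem~\ref{TCnTPcomp3}), a step your sketch silently assumes. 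Your treatment of (2) is essentially the paper's (trace map, K\"unneth via Conjecture~\ref{kunnethK1}, rigidity trick).
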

\begin{cor}[{Corollary~\ref{Bcrycor}}]\label{maincor}
We assume the conjecture~\ref{kunnethK1} holds. For an $\sO_K$-linear idempotent-complete, small smooth proper stable $\infty$-category $\sT$ and an integer $i$, there is an isomorphism of $\Bcry$-modules:
\[
\pi_i\TP(\sT_k;\Z_p)\otimes_W \Bcry \simeq \pi_i L_{K(1)}K(\sT_\Cu)\otimes_{\Z_p} \Bcry.
\]
\end{cor}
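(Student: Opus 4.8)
The plan is to treat the Breuil--Kisin module $\mathfrak{M}_i := \pi_i\TCn(\sT/\bS[z];\Z_p)$ supplied by Theorem~\ref{NCBMSintro} as a common interpolating object between the two sides and to deduce the statement from the classical crystalline comparison for finite--height modules (Fontaine, Kisin, \cite{BMS2}). The first reduction is to the range $i\ge n$ of Theorem~\ref{NCBMSintro}. Both functors in the corollary are $2$--periodic: $\TP(\sT_k;\Z_p)$ is a module over the $2$--periodic ring spectrum $\TP(k;\Z_p)$ (periodicity class $\sigma$ of degree $2$), and $\LK K(\sT_\Cu)$ is a module over $\LK K(\Cu)\simeq KU^{\wedge}_p$ (periodicity class $\beta$ of degree $2$; here one uses rigidity, cf.\ \cite{Thomason}, \cite{Atiyah}). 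Under the identifications of Theorem~\ref{NCBMSintro}(2)--(3) these two periodicities correspond to one another --- both amount, on homotopy groups, to a Tate twist --- so an isomorphism as in the corollary for one value of $i$ yields one for $i\pm 2$, and it suffices to establish it for $i\ge n$.

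Fix $i\ge n$ and set $\mathfrak{M} = \mathfrak{M}_i$, a Breuil--Kisin module over $\mS = W[[z]]$ with Frobenius (any $p$--power torsion in $\mathfrak{M}$ is harmless below, since $\Bcry$ is a $\Q_p$--algebra), $T = \pi_i\LK K(\sT_\Cu)$ and $V = T\otimes_{\Z_p}\Q_p$. By Theorem~\ref{NCBMSintro}(2),
\[
\mathfrak{M}\otimes_{\mS,\ol{\phi}}\Ainf[\tfrac{1}{\mu}]^{\wedge}_p \;\simeq\; T\otimes_{\Z_p}\Ainf[\tfrac{1}{\mu}]^{\wedge}_p
\]
compatibly with $G_K$ and Frobenius; since $\Z_p\to\Ainf[\tfrac{1}{\mu}]^{\wedge}_p$ is faithfully flat and the left side is a finitely generated module, $T$ is a finitely generated $\Z_p$--module and the displayed isomorphism realizes $\mathfrak{M}$ as an $\mS$--lattice in the étale $\varphi$--module attached to $V$. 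Invoking the theory of Breuil--Kisin modules and crystalline representations (Kisin's theorem that crystalline representations are exactly those of finite $E$--height, together with the refinements of \cite{BMS2} needed for higher Hodge--Tate weights), $V$ is a crystalline $G_K$--representation and its filtered $\varphi$--module is obtained by base change of $\mathfrak{M}$ along $\tilde{\phi}\colon\mS\to W\to K_0$, $z\mapsto 0$: one has $D_{\cry}(V)\simeq \mathfrak{M}\otimes_{\mS,\tilde{\phi}}K_0$ as filtered $\varphi$--modules, the Frobenius induced by that of $\mathfrak{M}$ and the filtration being the image of the Nygaard filtration on $\TCn$, matching the Hodge filtration. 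Comparing with Theorem~\ref{NCBMSintro}(3) yields a Frobenius-- and filtration--compatible isomorphism $D_{\cry}(V)\simeq \pi_i\TP(\sT_k;\Z_p)[\tfrac1p]$ (note $G_K$ acts trivially on the special--fibre side).

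The statement then follows by unwinding the definition of ``crystalline'': since $V$ is crystalline, there is a natural $G_K$--, Frobenius-- and filtration--compatible isomorphism $\Bcry\otimes_{\Q_p}V\simeq \Bcry\otimes_{K_0}D_{\cry}(V)$, whence
\[
\pi_i\TP(\sT_k;\Z_p)\otimes_W\Bcry \;\simeq\; D_{\cry}(V)\otimes_{K_0}\Bcry \;\simeq\; V\otimes_{\Q_p}\Bcry \;\simeq\; \pi_i\LK K(\sT_\Cu)\otimes_{\Z_p}\Bcry,
\]
which is the assertion of the corollary; in particular $\pi_i\LK K(\sT_\Cu)\otimes_{\Z_p}\Q_p$ is crystalline, as predicted in Conjecture~\ref{NCcryconj}. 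Conjecture~\ref{kunnethK1} enters only insofar as it is used in Theorem~\ref{NCBMSintro}(2).

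The step I expect to be the main obstacle is the \emph{filtered} $\varphi$--module identification $D_{\cry}(V)\simeq \pi_i\TP(\sT_k;\Z_p)[\tfrac1p]$: the two base--change maps $\ol{\phi},\tilde{\phi}\colon\mS\to\Bcry$ (sending $z$ to $[\pi^{\flat}]^p$, resp.\ to $0$) genuinely differ, so the coincidence of the resulting base changes is precisely the crystalline Frobenius comparison and must be extracted from the $\varphi$--structure of $\mathfrak{M}$; moreover one has to transport the Nygaard/motivic filtrations on $\TCn(\sT/\bS[z];\Z_p)$ and on $\TP(\sT_k;\Z_p)$ through these maps and recognize the Hodge filtration entering $D_{\cry}$, all in the non--commutative setting where no underlying variety is available. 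Equivalently --- and this is the route I would actually carry out, following \cite{BMS2} --- one bypasses Kisin's classification and constructs the $\Bcry$--linear comparison directly, by base--changing $\mathfrak{M}$ along $\mS\to\Ainf\to\Acry\to\Bcry$, using the Frobenius to match this single $\Bcry$--module with $\mathfrak{M}\otimes_{\mS,\tilde{\phi}}\Bcry$, and using finiteness of the homotopy groups together with $\Acry[\tfrac1\mu]^{\wedge}_p$ being $p$--complete to see that the same module recovers the étale side.
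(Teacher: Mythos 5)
Your main argument has a genuine gap exactly at the step you flag: from the fact that $\mathfrak{M}_i=\pi_i\TCn(\sT/\bS[z];\Z_p)$ is a Breuil--Kisin module and that $\mathfrak{M}_i\otimes_{\mS,\ol\phi}\Ainf[\tfrac1\mu]^{\wedge}_p\simeq T\otimes_{\Z_p}\Ainf[\tfrac1\mu]^{\wedge}_p$, you infer that $V=T\otimes\Q_p$ is a \emph{crystalline} $G_K$--representation with $D_{\cry}(V)\simeq\mathfrak{M}_i\otimes_{\mS,\tilde\phi}K_0$. That does not follow. The existence of a Breuil--Kisin module realizing a $G_{K_\infty}$--lattice is far from forcing crystallinity of the $G_K$--action: one needs a descent datum of the type encoded in Gao's Breuil--Kisin $G_K$--modules (produced only later, in Theorem~\ref{BKGKmodulethm}), and even with that one only obtains \emph{semi-stability}, never crystallinity. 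This is precisely the obstruction the paper isolates in the remark after Theorem~\ref{main}: Theorem~\ref{NCBMSintro}, even granting Conjecture~\ref{kunnethK1}, does not imply the non-commutative crystalline Conjecture~\ref{NCcryconj}, because the would-be crystalline comparison requires inverting $\mu$ in the Nygaard-completed $\widehat\Acry$, where $\mu=0$. So the sentence ``$V$ is a crystalline $G_K$--representation'' is not a consequence of the inputs you have; no crystallinity of $V$ is proved anywhere in the paper, and your route---if one could complete it---would settle the open Conjecture~\ref{NCcryconj}, which the corollary does not claim.

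Note also that you are proving a stronger statement than the corollary asserts. The corollary only asks for an isomorphism of $\Bcry$--modules, with no $G_K$-, $\varphi$-, or filtration-compatibility. The paper's argument is correspondingly minimal: it applies Lemma~\ref{BKlemma} (the Fargues--Fontaine/BMS1 statement that for any Breuil--Kisin module $M$, the $\Bcry$--base changes of $M_{\Ainf}=M\otimes_{\mS,\ol\phi}\Ainf$ and of $M'=M_{\Ainf}\otimes_{\Ainf}W$ are noncanonically $\varphi$-equivariantly isomorphic) to $\mathfrak{M}_i$, and then substitutes the identifications of Theorem~\ref{NCBMSintro}(2) and~(3) into the two sides. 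No Galois-theoretic classification of $V$ is used. Your final paragraph---the alternative route ``base-change $\mathfrak{M}$ along $\mS\to\Ainf\to\Acry\to\Bcry$, use the Frobenius to match with $\mathfrak{M}\otimes_{\mS,\tilde\phi}\Bcry$''---is essentially this lemma and essentially the paper's proof; that is the version you should carry out, and you can discard the Kisin-classification detour entirely and drop the claim of $G_K$-equivariance.
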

\begin{thm}[{Theorem~\ref{mainwithproof}}, Main theorem]\label{main}
\begin{itemize}
 \item[(1)] For an $\sO_K$-linear idempotent-complete, small smooth proper stable $\infty$-category $\sT$, there is a natural number $n$ such that for any $i\geq n$, $\pi_i \TCn(\sT/\bS[z];\Z_p)_{\free}$ is a Breuil-Kisin $G_K$-module in the sense of \cite{Gao}. In particular, $\bigl( \widehat{\pi_i\TCn(\sT/\bS[z];\Z_p)_{\free}}\otimes_{\Ainf} W(\Cu) \bigr)^{\widehat{\varphi}\otimes\varphi_{\Cu}=1}$ is a $\Z_p$-lattice of a semi-stable representation, where $\varphi$ is the Frobenius map of $\TCn(\sT/\bS[z];\Z_p)_{\free}$ and $\widehat{\varphi}$ is given in \eqref{power} and $\varphi_{\Cu}$ is the Frobenius of $W(\Cu)$.

  \item[(2)] We assume the conjecture~\ref{kunnethK1} holds. Then for an $\sO_K$-linear idempotent-complete, small smooth proper stable $\infty$-category $\sT$ and an integer $i$, there is a $G_K$-equivariant isomorphism
\[
\pi_i \LK K(\sT_{\Cu})\otimes_{\Z_p}\Q_p \simeq \bigl(\bigl( \widehat{\pi_i\TCn(\sT/\bS[z];\Z_p)_{\free}}\otimes_{\Ainf} W(\Cu) \bigr)^{\widehat{\varphi}\otimes \varphi_{\Cu}=1}\otimes_{\Z_p}\Q_p.
\]
In particular, $\pi_i \LK K(\sT_{\Cu})\otimes_{\Z_p}\Q_p$ is a semi-stable representation.

\item[(3)] For a smooth proper variety $X$ over $\sO_K$, $H^*_\et(X_{\Cu},\Q_p)$ is a semi-stable representation. 
\end{itemize}
\end{thm}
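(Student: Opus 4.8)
\emph{Overall strategy and Part~(1).} The plan is to feed the output of Theorem~\ref{NCBMSintro} into Gao's classification of Breuil--Kisin $G_K$-modules \cite{Gao}, and, for part~(3), to transport semi-stability along the isomorphisms \eqref{even02}--\eqref{odd03}. For part~(1), fix $i\geq n$. By Theorem~\ref{NCBMSintro}(1) the $\mS$-module $\pi_i\TCn(\sT/\bS[z];\Z_p)$ is a Breuil--Kisin module, so $\pi_i\TCn(\sT/\bS[z];\Z_p)_{\free}$ is finite free over $\mS=W[[z]]$ and carries a Frobenius that becomes an isomorphism after inverting the Eisenstein polynomial $E$ of $\pi$. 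What must be supplied is the descent datum promoting it to a Breuil--Kisin $G_K$-module in the sense of \cite{Gao}, namely a continuous, Frobenius-compatible $G_K$-action on the relevant base change. I would obtain this from functoriality of $\TCn$ under $\sO_K\to\sO_{\Cu}$: the base change $\pi_i\TCn(\sT/\bS[z];\Z_p)\otimes_{\mS,\ol\phi}\Ainf$ is the corresponding $\Ainf$-valued invariant of $\sT_{\sO_{\Cu}}$ (where $\sO_{\Cu}=\Ainf/\xi$), and the $G_K$-equivariance of $\sO_{\Cu}$ and of the prism $(\Ainf,\xi)$ equips it with a continuous $G_K$-action, compatible with $\varphi$ and with the operator $\widehat\varphi$ of \eqref{power}. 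Granting that Gao's axioms then hold, his classification theorem gives exactly that
\[
\bigl(\widehat{\pi_i\TCn(\sT/\bS[z];\Z_p)_{\free}}\otimes_{\Ainf}W(\Cu)\bigr)^{\widehat{\varphi}\otimes\varphi_{\Cu}=1}
\]
is a $\Z_p$-lattice in a semi-stable $G_K$-representation.

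\emph{Part~(2).} Assume Conjecture~\ref{kunnethK1}; write $M_i=\pi_i\TCn(\sT/\bS[z];\Z_p)_{\free}$ and let $T(M_i)$ denote the lattice of part~(1). The étale-realization formalism for Breuil--Kisin modules identifies $T(M_i)\otimes_{\Z_p}\Ainf[\frac{1}{\mu}]^{\wedge}_p$ with $M_i\otimes_{\mS,\ol\phi}\Ainf[\frac{1}{\mu}]^{\wedge}_p$, $G_K$- and $\varphi$-equivariantly; combined with Theorem~\ref{NCBMSintro}(2) this gives a $G_K$-equivariant isomorphism
\[
\pi_i\LK K(\sT_{\Cu})\otimes_{\Z_p}\Ainf[\frac{1}{\mu}]^{\wedge}_p\;\simeq\;T(M_i)\otimes_{\Z_p}\Ainf[\frac{1}{\mu}]^{\wedge}_p\qquad(i\geq n).
\]
Since both sides are étale realizations, this will descend to a $G_K$-equivariant isomorphism $\pi_i\LK K(\sT_{\Cu})\otimes_{\Z_p}\Q_p\simeq T(M_i)\otimes_{\Z_p}\Q_p$: apply $(-)^{\varphi=1}$ and use $\bigl(\Ainf[\frac{1}{\mu}]^{\wedge}_p\bigr)^{\varphi=1}=\Z_p$, equivalently the full faithfulness of the étale-realization functor of \cite{Gao}. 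Then $\pi_i\LK K(\sT_{\Cu})\otimes_{\Z_p}\Q_p$ is semi-stable for $i\geq n$ by part~(1). Finally, $\LK K(\sT_{\Cu})$ is a module over $\LK K(\Cu)$, whose homotopy is $\Z_p(m)$ in degree $2m$ and $0$ in odd degrees, so multiplication by the Bott class gives $\pi_{i+2}\LK K(\sT_{\Cu})\simeq\pi_i\LK K(\sT_{\Cu})(1)$; interpreting the right-hand side for $i<n$ via the Breuil--Kisin twist of a representative $i+2N\geq n$, both the displayed isomorphism and semi-stability propagate to all $i\in\Z$ by Tate twisting.

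\emph{Part~(3).} Apply parts~(1)--(2) with $\sT=\perf(X)$ for $X$ smooth proper over $\sO_K$. Here Theorem~\ref{NCBMSintro}(4) is available in place of Theorem~\ref{NCBMSintro}(2), so Conjecture~\ref{kunnethK1} is not needed, and $\pi_i\LK K(X_{\Cu})\otimes_{\Z_p}\Q_p$ is semi-stable for $i\geq n$, hence, by Bott periodicity and twist-invariance of semi-stability, for every $i$, in particular $i=0,1$. By \eqref{even02} and \eqref{odd03}, $\pi_0\LK K(X_{\Cu})\otimes_{\Z_p}\Q_p$ and $\pi_1\LK K(X_{\Cu})\otimes_{\Z_p}\Q_p$ are finite direct sums of Tate twists of the groups $H^j_{\et}(X_{\Cu},\Q_p)$; since semi-stable representations are closed under direct summands and under Tate twists, every $H^j_{\et}(X_{\Cu},\Q_p)$ is semi-stable.

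\emph{Main obstacle.} The main obstacle will be part~(1): one must check that $\pi_i\TCn(\sT/\bS[z];\Z_p)_{\free}$, equipped with the structures it inherits from $\TCn$ and its $\Ainf$-base change, genuinely satisfies the axioms of a Breuil--Kisin $G_K$-module of \cite{Gao} --- the delicate points being the compatibility of the $G_K$-descent datum with the Frobenius $\widehat\varphi$ of \eqref{power}, the finiteness and freeness of the free part in the stated range of degrees, and the requisite triviality of the descent action modulo the relevant ideal. The descent from $\Ainf[\frac{1}{\mu}]^{\wedge}_p$-coefficients to $\Q_p$-representations in part~(2) is the second point needing care, although it should be formal once the étale-realization functor of \cite{Gao} is in hand.
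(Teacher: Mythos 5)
Your overall strategy coincides with the paper's: deduce (1) from Gao's theory of Breuil--Kisin $G_K$-modules, deduce (2) by combining Theorem~\ref{NCBMSintro}(2) with the \'etale realization of Breuil--Kisin $G_K$-modules, and deduce (3) by substituting the commutative K\"unneth formula for Conjecture~\ref{kunnethK1} and transporting semi-stability back to $H^*_{\et}$ via Lemma~1.2. Part (3) of your write-up is actually more explicit than the paper's one-line citation of Proposition~\ref{commkunnethK1}; note only that you should carry along the hypothesis that $K/\mathbb{Q}_p$ is finite, which is present in the body's Theorem~\ref{mainwithproof}(3) but dropped in the introduction's restatement, since Proposition~\ref{commkunnethK1} relies on choosing an abstract field isomorphism $\Cu\simeq\bC$.

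The genuine gap is the point you yourself flag as the ``main obstacle'': verifying that $\pi_i\TCn(\sT/\bS[z];\Z_p)_\free$, equipped with the $G_K$-action on its $\Ainf$-base change, actually satisfies Gao's four axioms. The paper devotes a separate result (Theorem~\ref{BKGKmodulethm}) to this, and the existence and $\widehat{\varphi}$-compatibility of the $G_K$-action (your conditions (1)--(3)) do follow from the functoriality you describe, supplemented by the identifications $\pi_i\TCn(\sT/\bS[z];\Z_p)\otimes_{\mS,\phi}\Ainf\simeq\pi_i\TCn(\sT_{\sO_\Cu};\Z_p)$ and Theorem~\ref{TPTCnOCOCcomp}; but Gao's condition (4), that $\fM/z\fM$ lands inside $(\widehat{\fM}\otimes_{\Ainf}W(\ol k))^{G_K}$, is the delicate one, and the paper proves it via a new comparison (Theorem~\ref{TPTPOColkcomp}) between $\pi_i\TP(\sT_{\sO_\Cu};\Z_p)[\frac{1}{p}]\otimes W(\ol k)[\frac{1}{p}]$ and $\pi_i\TP(\sT_{\ol k};\Z_p)[\frac{1}{p}]$, together with a freeness argument after inverting $p$. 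Your proposal does not supply this step. A secondary caveat in part (2): the $(-)^{\varphi=1}$ argument needs the isomorphism of Theorem~\ref{NCBMSintro}(2) to be equivariant for the twisted Frobenius $\widehat{\varphi}$, which is not recorded in that statement; the paper instead establishes part (2) through an explicit commutative $G_K$-equivariant diagram of Cartesian squares in which the $\widehat{\varphi}$-compatibility is built in before invoking Gao's Proposition~7.1.4.
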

\begin{remark}
Statement (3) is directly deduced from the crystalline comparison theorem. In this paper, we will give a $K$-theoretical proof.
\end{remark}
\begin{remark}
  Bhatt-Morrow-Scholze's Breuil-Kisin cohomology theory \cite[Theorem~1.2]{BMS2} implies crystalline comparison theorem \cite{Falcrys}. On the other hand, even assuming Conjecture~\ref{kunnethK1}, Theorem~\ref{NCBMSintro} does not imply Conjecture~\ref{NCcryconj}. This difference arises as follows. On Breuil-Kisin cohomology theory, there is a $G_K$-equivariant isomorphism (see \cite[Theorem 1.8 (iii)]{BMS1})
  \begin{equation}\label{diffCNC}
  R\Gamma_{\Ainf}(X_{\sO_\Cu})\otimes_{\Ainf} \Acry \simeq R\Gamma_{\cry}(X_{\sO_\Cu/p}/\Acry).
  \end{equation}
Combine \eqref{diffCNC} with \cite[Theorem~1.8 (iv)]{BMS1}, there is canonical $(G_K,\varphi)$-equivariant isomorphism 
\begin{equation}\label{BMSmulocal}
R\Gamma_{\et}(X_{\Cu},\Z_p)\otimes_{\Z_p} \Acry[\frac{1}{p\mu}] \simeq R\Gamma_{\cry}(X_{\sO_\Cu/p}/\Acry)[\frac{1}{p\mu}].
\end{equation}
This induces the crystalline comparison theorem (see \cite[Theorem 14.4]{BMS1}). On the other hand, the non-commutative analogue of \eqref{diffCNC} becomes the following $G_K$-equivariant isomorphism
 \begin{equation}\label{diffCNC2}
 \pi_i \TP(\sT_{\sO_\Cu};\Z_p)\otimes_{\Ainf} \widehat{\Acry} \simeq \pi_i\TP(\sT_{\sO_\Cu/p};\Z_p),
 \end{equation}
 where $\widehat{\Acry}$ is the completion of $\Acry$ with respect to the Nygaard filtration (see \cite[Definition 8.9]{BMS2}). The problem is that $\mu$ is $0$ in $\widehat{\Acry}$ (see \cite[Corollary 2.11 and Corollary 2.12]{remarkonK1}). Thus the non-commutative analogue of \eqref{BMSmulocal} becomes the trivial equitation
 \[
 0=\pi_i \LK K(\sT_\Cu)\otimes_{\Z_p} \widehat{\Acry}[\frac{1}{p\mu}] \simeq \pi_i\TP(\sT_{\sO_\Cu/p};\Z_p)[\frac{1}{p\mu}]=0.
 \]
\end{remark}

\section*{Acknowledgement}
I would like to thank Federico Binda, Lars Hesselholt, Shane Kelly, Hyungseop Kim, Hiroyasu Miyazaki for helpful discussions related to this subject. I would also like to thank Ryomei Iwasa for comments on an earlier draft and Alexender Petrov for useful comments on a draft.

\section{Non-commutative version of Breuil-Kisin cohomology}


\subsection{Breuil-Kisin modules and Breuil-Kisin cohomology theory $R\Gamma_{\mS}$} Let us start by recalling the theory of Breuil–Kisin modules.
\begin{defn}
  A Breuil–Kisin module is a finitely generated $\mS$-module $M$ equipped with an isomorphism
  \[
  \varphi_M:M\otimes_{\mS,\varphi} \mS[\frac{1}{E}] \simeq M[\frac{1}{E}].
  \]
\end{defn}
\begin{lemma}[{\cite[Corollaire 11.1.14]{FarguesFontaine}} and {\cite[Lemma 4.27]{BMS1}}]\label{BKlemma} Let $\ol{\phi}:\mS \to \Ainf$ be the map that sends $z$ to $[\pi^\flat]^p$ and is Frobenius on $W$. Let $M$ be a Breuil–Kisin module, and let $M_{\Ainf}=M\otimes_{\mS,\ol{\phi}}\Ainf$. Then $M'[\frac{1}{p}]=M_{\Ainf}[\frac{1}{p}]\otimes_{\Ainf[\frac{1}{p}]}K_0$ is a finite free $K_0$-module equipped with a Frobenius automorphism. Fix a section $k \to \sO_{\Cu}/p$, then here is a (noncanonical) $\varphi$-equivariant isomorphism
\[
M_{\Ainf}\otimes_{\Ainf}\Bcry \simeq M'[\frac{1}{p}]\otimes_{K_0}\Bcry.
\]
\end{lemma}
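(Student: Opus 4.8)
\emph{Overall strategy.} I would treat Lemma~\ref{BKlemma} as a statement about Frobenius modules and reduce it to the structure theory of $\varphi$-modules over $\Bcry$, the upshot being that $M'[\tfrac1p]$ is nothing but the isocrystal obtained by specialising $M$ at $z=0$. First one records that $M[\tfrac1p]$ is finite free over $\mS[\tfrac1p]=W[[z]][\tfrac1p]$: that ring is a one-dimensional regular domain, hence a principal ideal domain, and $M[\tfrac1p]$ is torsion free because $\varphi\colon\mS\to\mS$ is finite of degree $p$ along the $z$-direction, so $\varphi^{*}$ multiplies the length of any $\mS[\tfrac1p]$-torsion module by $p$, which is incompatible with $\varphi_{M}$ being an isomorphism after inverting $E$ (the locus $V(E)$ being a single closed point which $\varphi$ does not preserve). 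Hence $M_{\Ainf}[\tfrac1p]=M[\tfrac1p]\otimes_{\mS[1/p],\ol\phi}\Ainf[\tfrac1p]$ is finite free over $\Ainf[\tfrac1p]$, and $M'[\tfrac1p]$ is a finite-dimensional $K_{0}$-vector space of the same rank $r$. For the Frobenius: the composite $\mS\xrightarrow{\ol\phi}\Ainf[\tfrac1p]\to K_{0}$ defining $M'$ is $\varphi$-equivariant and sends $\ol\phi(z)=[\pi^{\flat}]^{p}$ to $0$ (since $\pi^{\flat}$ lies in the maximal ideal of $\sO_\Cu^{\flat}$ and $W\subset\Ainf$ is compatible with the chosen section), so it equals $z\mapsto 0$ followed by the Frobenius of $W$; since $E(0)\in pW^{\times}$ it carries $E$ to a unit times $p$. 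Base-changing $\varphi_{M}$ along this map and inverting $p$ therefore yields a $\sigma$-semilinear automorphism $\varphi_{M'}$ of $M'[\tfrac1p]$, making it an isocrystal over $k$, in particular finite free over $K_{0}$.

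\emph{The comparison over $\Bcry$.} The key point is that $\ol\phi(E)$ becomes a unit in $\Bcry$. Indeed $\tilde\theta\circ\ol\phi\colon\mS\to\sO_\Cu$ is the canonical map $\mS\to\sO_{K}\hookrightarrow\sO_\Cu$, so $\ol\phi(E)$ generates $\ker(\tilde\theta\colon\Ainf\to\sO_\Cu)$ — one checks it is primitive of degree one using that $E$ is Eisenstein — equivalently $\ol\phi(E)$ is a unit multiple of $\varphi(\xi)$. Now in $\Bcry=\Acry[\tfrac1p][\tfrac1t]$ the element $t$ is a unit multiple of $\mu=[\epsilon]-1$ (the series $\log(1+\mu)/\mu$ converges to a unit of $\Acry[\tfrac1p]$, $\mu$ being topologically nilpotent there), while $\mu=\xi\cdot\varphi^{-1}(\xi)\cdots\varphi^{-(n-1)}(\xi)\cdot\varphi^{-n}(\mu)$ for every $n$; hence inverting $t$ inverts $\xi$ together with all its Frobenius translates, in particular $\varphi(\xi)$ and $\ol\phi(E)$. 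Consequently $V:=M_{\Ainf}\otimes_{\Ainf}\Bcry=M[\tfrac1p]\otimes_{\mS[1/p]}\Bcry$ is a finite free $\Bcry$-module of rank $r$ on which $\varphi_{M}$ induces a $\varphi$-semilinear automorphism, i.e.\ a $\varphi$-module over $\Bcry$. By the Fargues--Fontaine classification (\cite[Corollaire~11.1.14]{FarguesFontaine}), base change along $W(\ol k)[\tfrac1p]\to\Bcry$ gives an equivalence from isocrystals over $\ol k$ to $\varphi$-modules over $\Bcry$, so $V\simeq\mathcal D\otimes_{W(\ol k)[1/p]}\Bcry$ for an isocrystal $\mathcal D$ over $\ol k$, recovered from $V$ in a $\varphi$-functorial way (via its slope filtration, which splits over $\Bcry$).

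\emph{Identification of isocrystals, and the main obstacle.} It remains to identify $\mathcal D$ with $M'[\tfrac1p]\otimes_{K_{0}}W(\ol k)[\tfrac1p]$ as isocrystals; granting this, tensoring with $\Bcry$ and using $M'[\tfrac1p]\otimes_{K_{0}}W(\ol k)[\tfrac1p]\otimes_{W(\ol k)[1/p]}\Bcry=M'[\tfrac1p]\otimes_{K_{0}}\Bcry$ produces the asserted $\varphi$-equivariant isomorphism $M_{\Ainf}\otimes_{\Ainf}\Bcry\simeq M'[\tfrac1p]\otimes_{K_{0}}\Bcry$, noncanonical precisely because it depends on the section $k\to\sO_\Cu/p$. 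This last identification is where I expect the real work to lie, and it is exactly the content borrowed from \cite[Corollaire~11.1.14]{FarguesFontaine} and \cite[Lemme~4.27]{BMS1}: the naive approach — specialising $\Ainf\to W(\ol k)$ along the residue field of $\sO_\Cu^{\flat}$ — fails, since that map kills $t$ (hence $\mu$, hence $\xi$) and does not extend to $\Bcry$, so one cannot simply ``reduce modulo the maximal ideal''. Instead one must match the $\xi$-adic behaviour of $\varphi_{M}$ over $\Bcry$ with the $p$-adic behaviour of $\varphi_{M'}$ over $K_{0}$; the reason these agree is that $\ol\phi$ sends $E$ to a generator of $\ker\tilde\theta$ while $E(0)\in pW^{\times}$, so the Newton polygons of $V$ and of $M'[\tfrac1p]$ coincide, after which the Fargues--Fontaine equivalence upgrades this numerical match to the isomorphism, with $\varphi$-equivariance then automatic.
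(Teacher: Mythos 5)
The paper does not actually prove Lemma~\ref{BKlemma}; it is stated as a citation to \cite[Corollaire 11.1.14]{FarguesFontaine} and \cite[Lemma 4.27]{BMS1}, with no proof environment following it. So there is no in-paper argument to compare against. Judging your sketch on its own merits: the overall architecture is the standard one and several of the auxiliary observations are correct — that $M[\tfrac1p]$ should be finite free over the PID $\mS[\tfrac1p]$, that $\ol\phi(E)=\varphi_{\Ainf}(\xi)$ becomes a unit in $\Bcry$ (your factorization of $\mu$ and the unit $t/\mu$ are the right mechanisms), and that one then wants to invoke Dieudonn\'e--Manin/Fargues--Fontaine to split the resulting $\varphi$-module over $\Bcry$.

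There are, however, two gaps, and the second is the real one. First, your torsion-freeness argument for $M[\tfrac1p]$ is only a heuristic: the length-multiplication-by-$p$ count must be reconciled with the fact that $\varphi_M$ is an isomorphism only after inverting $E$, and the bookkeeping at the point $V(E)$ (which $\Spec\varphi$ does move, but in a way that depends on the arithmetic of $E(z^p)$) is not carried out; the clean statement is Kisin's theorem that the $\mS$-torsion of a Breuil--Kisin module is $p$-power torsion, which you should invoke directly. Second and more seriously, you explicitly do not establish the identification of the isocrystal $\mathcal D$ associated to $V:=M_{\Ainf}\otimes_{\Ainf}\Bcry$ with $M'[\tfrac1p]\otimes_{K_0}W(\ol k)[\tfrac1p]$ — you flag it as ``where the real work lies'' — and the proposed replacement (matching Newton polygons because $\ol\phi(E)$ generates $\ker\tilde\theta$ and $E(0)\in pW^{\times}$) is an analogy, not an argument: the slopes of $V$ as a $\varphi$-module over $\Bcry$ are not visibly governed by where $E$ alone is sent, and you give no computation of them. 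In \cite{BMS1} this step is not a Newton-polygon comparison at all; it rests on an intermediate isomorphism $N\otimes_{\Ainf}W(\ol k)[\tfrac1p]\simeq (M/zM)[\tfrac1p]\otimes_W W(\ol k)[\tfrac1p]$ of isocrystals (their Lemma~4.26), obtained from the $W(\ol k)$-algebra structure on $\Acry$ supplied by the chosen section $k\to\sO_\Cu/p$, followed by a deformation/rigidity statement lifting it over $\Bcry^+$. Since that intermediate isomorphism is precisely the content being cited, your sketch is an honest outline of how one would set up the reduction, but it does not contain a proof of the decisive step.
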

In \cite{BMS2}, Bhatt-Morrow-Scholze constructed a cohomology theory valued in Breuil–Kisin modules for smooth proper varieties over $\sO_K$. Let $\varphi_{\Ainf}:\Ainf \to \Ainf$ be the Frobenius endomorphism of $\Ainf$. Let $\phi:\mS \to \Ainf$ be the $W$-linear map that sends $z$ to $[\pi^\flat]$. Note that the diagram 
\begin{equation}\label{SAinfdiag}
\xymatrix
{\mS \ar[r]^{\varphi} \ar[d]_{\phi} \ar[rd]^{\ol{\phi}}& \mS \ar[d]^{\phi} \\
\Ainf \ar[r]_{\varphi_{\Ainf}}^{\simeq } & \Ainf
}
\end{equation}
commutes.
\begin{thm}[{\cite{BMS2}}]\label{BKcoh}
 Let $X/\sO_K$ be a proper smooth formal scheme. Then there exists a $\mS$-linear cohomology theory $R\Gamma_{\mS}(X)$ equipped with a $\varphi$-semilinear map, with the following properties:
 \begin{itemize}
   \item[(1)] All $H^i_\mS(X) :=H^i(R\Gamma_\mS(X))$ are Breuil-Kisin modules.
   \item[(2)] (\'etale comparison) After scalar extension along $\ol{\phi}:\mS\to \Ainf$, one recovers \'etale cohomology of the generic fiber
   \[
   R\Gamma_{\mS}(X) \otimes_{\mS} \Ainfmu \simeq R\Gamma_{\et}(X_\Cu,\Z_p)\otimes_{\Z_p} \Ainfmu 
   \]
   \item[(3)] (crystalline comparison) After scalar extension along the map $\mS\to W$ which is the Frobenius on $W$ and sends $z$ to $0$, one recovers crystalline cohomology of the special fiber:
   \[
   R\Gamma_\mS(X)\otimes_{\mS}^{\mathbb{L}}W \simeq R\Gamma_{\cry}(X_k/W).
   \]
   \item[(4)] (de Rham comparison) After scalar extension along the map $\tilde{\theta}\circ \varphi:\mS \to \sO_K$, one recovers de Rham cohomology:
   \[
   R\Gamma_{\mS}(X)\otimes_{\mS}^{\mathbb{L}}\sO_K \simeq R\Gamma_{\text{dR}}(X/\sO_K).
   \]
   \end{itemize}
\end{thm}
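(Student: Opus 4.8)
This is (essentially) \cite[Theorem~1.2]{BMS2}; the plan is to recall how it is proved, in a form compatible with the topological-cyclic-homology picture used in Theorem~\ref{NCBMSintro} below. One constructs $R\Gamma_{\mS}(X)$ affine-locally and then by quasisyntomic descent. Over a quasisyntomic $\sO_K$-algebra $R$ one has the $p$-complete $\THH(R;\Z_p)$ with its circle action and the associated $\TCn(R;\Z_p)$ and $\TP(R;\Z_p)$; the key local computation of \cite{BMS2} gives $\pi_0\TCn(\sO_K;\Z_p)\cong\mS$ and $\pi_0\TP(\sO_K;\Z_p)\cong\mS$, with the Frobenius and the Nygaard filtration on the $\THH$-side matching the Frobenius $\varphi$ of $\mS$ and the $E$-adic filtration, $E=\ker(\tilde\theta)$. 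Sheafifying $\TP(-;\Z_p)$ and $\TCn(-;\Z_p)$ on the quasisyntomic site of the formal scheme $X$ and passing to the associated motivic filtration --- equivalently, forming the prismatic cohomology relative to the Breuil--Kisin prism $(\mS,(E))$ attached to $\pi$ --- yields the $\mS$-linear complex $R\Gamma_{\mS}(X)$ together with its $\varphi$-semilinear map.

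The comparisons (2)--(4) are then each a base change of the single complex $R\Gamma_{\mS}(X)$ along the indicated ring map, matched against the corresponding classical comparison. For (2): base change along $\ol\phi$, which by the commuting triangle \eqref{SAinfdiag} equals $\varphi_{\Ainf}\circ\phi$, identifies $R\Gamma_{\mS}(X)\otimes_{\mS,\phi}\Ainf$ --- up to the Frobenius bookkeeping recorded in \eqref{SAinfdiag} --- with the $\Ainf$-cohomology $R\Gamma_{\Ainf}(X_{\sO_\Cu})$ of \cite{BMS1}; inverting $\mu$ and invoking the \'etale comparison \cite[Theorem~1.8]{BMS1} gives the stated isomorphism with $R\Gamma_{\et}(X_\Cu,\Z_p)\otimes_{\Z_p}\Ainfmu$. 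For (3): the map $\mS\to W$, $z\mapsto 0$, that is Frobenius on $W$ is a morphism from the Breuil--Kisin prism $(\mS,(E))$ to the crystalline prism $(W,(p))$, and base change along it recovers crystalline cohomology $R\Gamma_{\cry}(X_k/W)$ of the special fibre (locally, via the identification of $\pi_*\TP(-;\Z_p)$ of a smooth $k$-algebra with crystalline cohomology over $W$). For (4): $\tilde\theta\circ\varphi:\mS\to\sO_K$ is the Frobenius of the prism followed by its structure map $\tilde\theta$, and the base change is the de Rham specialization $R\Gamma_{\mS}(X)\otimes^{\L}_{\mS}\sO_K\simeq R\Gamma_{\text{dR}}(X/\sO_K)$, checked locally via the Hochschild--Kostant--Rosenberg filtration.

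For (1) one must see that each $H^i_{\mS}(X)$ is a finitely generated $\mS$-module on which $\varphi_M$ becomes an isomorphism after inverting $E$. The $E$-isogeny property is exactly the content of (2) over the locus $\mS[\tfrac{1}{E}]$, where the Frobenius on $\Ainf$-cohomology is invertible, so it is automatic once (2) is available. Finiteness is the genuinely non-formal point: properness of $X$ makes $R\Gamma_{\mS}(X)$ a perfect complex of $\mS$-modules, and following \cite{BMS2} I would deduce this by base changing to the residue field $k=\mS/(p,z)$, using (4) together with finiteness of Hodge--de Rham cohomology of the special fibre to see that the result has finite-dimensional total cohomology, and then running a derived Nakayama argument over the complete Noetherian local ring $\mS$.

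The main obstacle is precisely this construction-plus-finiteness step: assembling the local $\THH$-computations into a single global object over $W[[z]]$ carrying the correct Frobenius and Nygaard structure, and controlling its perfectness over $\mS$ rather than only after the base changes appearing in (2)--(4). Once that object is in hand, the Breuil--Kisin module property and all three comparison isomorphisms reduce to the corresponding, already established, comparison theorems for $\Ainf$-cohomology, crystalline cohomology, and de Rham cohomology.
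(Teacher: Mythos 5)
The paper gives no proof of this theorem at all: it is quoted verbatim (up to the choice of uniformizer presentation of $\mS$) from Bhatt--Morrow--Scholze and used as a black box, so there is no ``paper's own proof'' to compare against. Your write-up is therefore being judged only as a recall of the BMS2 argument, and as such it is accurate in outline: the quasisyntomic/$\THH$-descent construction, the local identification of $\pi_0$ with $\mS$, the isogeny property extracted from the Frobenius on the $\TCn\to\TP$ side, the reduction of (2)--(4) to the $\Ainf$-, crystalline, and de Rham comparison theorems, and the finiteness argument by perfectness over the complete Noetherian local ring $\mS$ after base change to the residue field --- these are the actual steps in \cite{BMS2}.

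Two small imprecisions worth fixing. First, the local computations you quote are for the \emph{relative} theories $\TCn(\sO_K/\bS[z];\Z_p)$ and $\TP(\sO_K/\bS[z];\Z_p)$, not the absolute $\TP(\sO_K;\Z_p)$; the paper itself is careful to keep the $/\bS[z]$ everywhere (see \eqref{BMScalcu} and \cite[Prop.\ 11.10]{BMS2}), and the absolute rings have different $\pi_0$. Second, the step ``the $E$-isogeny property is exactly the content of (2)'' is looser than what BMS2 actually does: $\ol\phi:\mS\to\Ainf$ is not faithfully flat, so one cannot simply descend invertibility from $\Ainf[\tfrac1\mu]$ to $\mS[\tfrac1E]$. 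In BMS2 (and in the non-commutative analogue given as Theorem~\ref{BKtheorem} later in this paper) the isogeny is obtained directly from the observation that on $\pi_*\TCn(\sO_K/\bS[z];\Z_p)\to\pi_*\TP(\sO_K/\bS[z];\Z_p)$ the cyclotomic Frobenius $\varphi^{hS^1}$ becomes invertible after inverting $u$, while $\can$ becomes invertible after inverting $E$, and then a flatness/K\"unneth argument transfers these to modules --- a genuinely local-to-$\mS$ argument rather than one deduced through $\Ainf$.
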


\subsection{Perfect modules and K\"unneth formula} Let $(\mathcal{A},\otimes,1_{\mathcal{A}})$ be a symmetric monoidal, stable $\infty$-category with biexact tensor product. Firstly we recall from \cite[section 1]{KunnethTP}.
\begin{defn}[{\cite[Definition 1.2]{KunnethTP}}]
  An object $X\in A$ is perfect if it belongs to the thick subcategory generated by the unit.
\end{defn}
For a lax symmetric monoidal, exact $\infty$-functor $F:\mathcal{A} \to \Sp$, $F(1_{\mathcal{A}})$ is naturally an $\mathbb{E}_\infty$-ring. For any $X,Y\in \mathcal{A}$, we have a natural map
\begin{equation}\label{laxmon}
F(X)\otimes_{F(1_{\mathcal{A}})} F(Y) \to F(X\otimes Y).
\end{equation}
Since $F$ is exact, if $X$ is perfect, then the map \eqref{laxmon} is an equivalence, and $F(X)$ is perfect as an $F(1_{\mathcal{A}})$-module. 

We regard $\sO_K$ as an $\bS [z]$-algebra via $z\mapsto \pi$. There is a symmetric monooidal $\infty$-functor
\[
\THH(-/\bS[z];\Z_p):\Cat (\sO_K) \to \Mod_{\THH(\sO_K/\bS[z];\Z_p)}(\Sp^{BS^1}).
\]
Let us study this functor from \cite[section 11]{BMS2}. By the base change along $\bS[z] \to \bS[z^{1/{p^\infty}}]~|~z\mapsto z$, there is a natural equivalence
\begin{equation}\label{basechange}
\THH(\sO_K/\bS[z]) \otimes_{\bS[z]} \bS[z^{1/{p^\infty}}] \simeq \THH(\sO_K[\pi^{1/p^{\infty}}]/\bS[z^{1/{p^\infty}}]).
\end{equation}
The natural map $\THH(\bS[z^{1/{p^{\infty}}}];\Z_p) \to \bS[z^{1/{p^{\infty}}}]^{\wedge}_p$ is an equivalence (see \cite[Proposition. 11.7]{BMS2}), we obtain equivalences
\begin{equation}
\THH(\sO_K[\pi^{1/p^{\infty}}];\Z_p) \simeq \THH(\sO_K[\pi^{1/p^{\infty}}]/\bS[z^{1/{p^\infty}}];\Z_p) \overset{\eqref{basechange}}{\simeq}\THH(\sO_K/\bS[z];\Z_p) \otimes_{\bS[z]^{\wedge}_p} \bS[z^{1/{p^\infty}}]^{\wedge}_p.
\end{equation}
Since a morphism of homotopy groups $\pi_*(\bS[z]^{\wedge}_p) \to \pi_*(\bS[z^{1/{p^\infty}}]^{\wedge}_p)$ is faithfully flat and there is an isomorphism
$\pi_*\THH(\sO_K[\pi^{1/p^{\infty}}];\Z_p) \simeq \sO_K[\pi^{1/p^{\infty}}][u]$, where $\deg u=2$ (see \cite[section. 6]{BMS2}), we obtain an isomorphism
\begin{equation}\label{BMScalcu}
\pi_*\THH(\sO_K/\bS[z];\Z_p) \simeq \sO_K[u],
\end{equation}
where $u$ has degree $2$ (see \cite[Proposition. 11.10]{BMS2}).
\begin{prop}\label{propdualisperf}
Any dualizable object in $\Mod_{\THH(\sO_K/\bS[z];\Z_p)}(\Sp^{BS^1})$ is perfect.
\end{prop}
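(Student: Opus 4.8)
The plan is to reduce the statement to a computation with homotopy groups, exploiting the explicit description $\pi_*\THH(\sO_K/\bS[z];\Z_p)\simeq \sO_K[u]$ with $\deg u=2$ from \eqref{BMScalcu}. Write $R=\THH(\sO_K/\bS[z];\Z_p)$, a connective $\mathbb{E}_\infty$-ring in $\Sp^{BS^1}$ with $\pi_*R\simeq \sO_K[u]$. The key point is that $\sO_K[u]$ is a (graded) regular Noetherian ring: $\sO_K$ is a discrete valuation ring, hence regular of dimension one, so $\sO_K[u]$ has finite global dimension. Therefore every finitely generated $\pi_*R$-module admits a finite resolution by finitely generated projective (in fact, after localizing, free) modules.

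First I would recall that an object $X$ of $\Mod_R(\Sp^{BS^1})$ (or already of $\Mod_R(\Sp)$) is perfect in the sense of \cite[Definition 1.2]{KunnethTP} if and only if it lies in the thick subcategory generated by the unit $R$; by a standard argument this is equivalent to $X$ being a compact object of $\Mod_R$, equivalently to $X$ being a \emph{dualizable} object of $\Mod_R$ — so over a ring spectrum "perfect" and "dualizable" coincide, and the content of the proposition is that a module which is dualizable \emph{over} $R$ but a priori only built from $R$ by colimits is actually compact. Next I would invoke the criterion that for a connective $\mathbb{E}_1$-ring $R$, an $R$-module $M$ is perfect if and only if $M$ is almost perfect (i.e. $\pi_i M$ is a finitely generated $\pi_0 R$-module for all $i$ and $\pi_i M=0$ for $i\ll 0$) \emph{and} $M$ has finite Tor-amplitude over $R$. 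A dualizable $R$-module is automatically almost perfect, so the whole problem is to bound the Tor-amplitude.

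The heart of the argument is then: because $\pi_*R\simeq\sO_K[u]$ has finite global dimension $d$ (concretely $d\le 2$, since $\operatorname{Spec}\sO_K$ has dimension $1$ and we adjoin one polynomial generator), any almost perfect $R$-module has Tor-amplitude bounded by a constant depending only on $d$ and on the (finite) range of nonvanishing homotopy groups. One clean way to see this: for a dualizable $X$, the function spectrum $\operatorname{Map}_R(X,R)$ is again dualizable, and $\pi_*\operatorname{Map}_R(X,R)$ is a finitely generated $\sO_K[u]$-module; the Tor-amplitude of $X$ over $R$ is controlled by the projective dimension of $\pi_*X$ (respectively $\pi_*\operatorname{Map}_R(X,R)$) over $\sO_K[u]$, which is at most $d<\infty$. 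Having established finite Tor-amplitude together with almost perfectness, Lurie's characterization of perfect modules \cite[Proposition 7.2.4.23 or 7.2.4.17]{Lurie17} gives that $X$ is perfect as an $R$-module, and the $S^1$-equivariant refinement follows because the forgetful functor $\Sp^{BS^1}\to\Sp$ is conservative, symmetric monoidal, and detects dualizability and compactness in the relevant sense (alternatively one runs the same homotopy-group argument internally, since $\Sp^{BS^1}$-modules over $R$ still have $\pi_*=\sO_K[u]$ as the relevant bookkeeping ring).

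The main obstacle I anticipate is the passage from "dualizable $R$-module" to "finite Tor-amplitude": dualizability gives finiteness of homotopy groups but, over a non-regular base, need not give perfectness (this is exactly the phenomenon that fails in general), so the argument genuinely must use regularity of $\sO_K$ — i.e. that $K$ is discretely valued with perfect residue field — and propagate it through $\sO_K[u]$. Getting the homological-algebra bookkeeping right, in particular handling the grading coming from $u$ and the $p$-completion (so that one works with $p$-complete modules and the relevant Noetherian, finite-global-dimension ring is really $\sO_K[u]$ or its $p$-completion), is where the care is needed; the rest is a formal application of the perfect $=$ almost perfect $+$ finite Tor-amplitude dictionary.
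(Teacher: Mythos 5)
The paper's proof is a one-line citation: it observes that $\pi_*\THH(\sO_K/\bS[z];\Z_p)\simeq\sO_K[u]$ is a regular Noetherian ring of finite Krull dimension concentrated in even degrees, and then invokes \cite[Theorem 2.15]{KunnethTP} of Antieau--Mathew--Nikolaus, which states precisely that for such a ring $A$ with $S^{1}$-action, every dualizable object of $\Mod_A(\Sp^{BS^1})$ is perfect.

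Your proposal has a genuine gap, and it is located exactly at the step you treat as routine. Early on you assert that in $\Mod_R(\Sp^{BS^1})$ ``perfect $=$ compact $=$ dualizable'' is a standard equivalence; this is false here. That chain of equivalences is valid for $\Mod_R(\Sp)$ over a ring spectrum $R$, because there the unit $R$ is compact. But in $\Mod_R(\Sp^{BS^1})$ with $R=\THH(\sO_K/\bS[z];\Z_p)$ the unit is \emph{not} compact (one needs infinitely many cells coming from $BS^1=\mathbb{CP}^\infty$), so the thick subcategory generated by the unit (``perfect''), the compact objects, and the dualizable objects are three genuinely different classes; only the inclusion perfect $\subseteq$ dualizable is automatic. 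The nontrivial content of the proposition is precisely the reverse inclusion inside $\Sp^{BS^1}$, and that is exactly what \cite[Theorem~2.15]{KunnethTP} proves, using the evenness and regularity hypotheses in an essential, $S^1$-equivariant way. Your later ``almost perfect $+$ finite Tor-amplitude'' criterion is again a statement about $\Mod_R(\Sp)$ (\cite[7.2.4]{Lurie17}), and the closing parenthetical that one can ``run the same homotopy-group argument internally'' does not address the failure of the unit to be compact; the forgetful functor $\Sp^{BS^1}\to\Sp$ is conservative and symmetric monoidal and so detects dualizability, but it does \emph{not} detect membership in the thick subcategory of the unit. So the proposal replaces the real content of the proposition with a formal-looking reduction that does not go through; one should either reproduce the $S^1$-equivariant filtration/descent argument of Antieau--Mathew--Nikolaus or cite their theorem as the paper does.
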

\begin{proof}
By the isomorphism \eqref{BMScalcu}, $\pi_*\THH(\sO_K/\bS[z];\Z_p)$ is a regular noetherian ring of finite Krull dimension concentrated in even degrees. We obtain the claim by \cite[Theorem 2.15]{KunnethTP}.
\end{proof}
\begin{prop}\label{TPTCkunneth}
Let $\sT_1,\sT_2$ be $\sO_K$-linear idempotent-complete, small stable $\infty$-categories and suppose $\sT_1$ is smooth and proper. Then $\THH(\sT_1/\bS[z];\Z_p)$ is perfect module $\Mod_{\THH(\sO_K/\bS[z];\Z_p)}(\Sp^{BS^1})$, and $\TCn(\sT_1/\bS[z];\Z_p)$ (resp. $\TP(\sO_K/\bS[z];\Z_p)$) is a perfect $\TCn(\sT_1/\bS[z];\Z_p)$-module (resp. $\TP(\sO_K/\bS[z];\Z_p)$-module) and the natural map
\[
\TCn(\sT_1/\bS[z];\Z_p) \otimes_{\TCn(\sO_K/\bS[z];\Z_p)}\TCn(\sT_2/\bS[z];\Z_p) \to \TCn(\sT_1\otimes_{\sO_K}\sT_2/\bS[z];\Z_p)
\]
\[
(\text{resp. }\TP(\sT_1/\bS[z];\Z_p) \otimes_{\TP(\sO_K/\bS[z];\Z_p)}\TP(\sT_2/\bS[z];\Z_p) \to \TP(\sT_1\otimes_{\sO_K}\sT_2/\bS[z];\Z_p) \quad )
\]
is an equivalence.
\end{prop}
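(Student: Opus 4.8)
The plan is to deduce the proposition formally from Proposition~\ref{propdualisperf} together with the elementary facts about perfect objects recalled at the start of this subsection (following \cite[Section~1]{KunnethTP}); the only genuinely external ingredient is the characterization of smoothness and properness via dualizability.

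First I would use that an object of $\Cat(\sO_K)$ is smooth and proper if and only if it is dualizable in the symmetric monoidal $\infty$-category $\Cat(\sO_K)$ (the usual characterization, see \cite{Lurie17}). Since $\THH(-/\bS[z];\Z_p)\colon \Cat(\sO_K)\to \Mod_{\THH(\sO_K/\bS[z];\Z_p)}(\Sp^{BS^1})$ is symmetric monoidal and symmetric monoidal functors preserve dualizable objects, $\THH(\sT_1/\bS[z];\Z_p)$ is dualizable in $\Mod_{\THH(\sO_K/\bS[z];\Z_p)}(\Sp^{BS^1})$. By Proposition~\ref{propdualisperf} it is therefore perfect, i.e.\ it lies in the thick subcategory generated by the unit $\THH(\sO_K/\bS[z];\Z_p)$. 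Moreover, because $\THH(-/\bS[z];\Z_p)$ is genuinely (not merely laxly) symmetric monoidal, its structure equivalences give, for all $\sT_1,\sT_2$,
\[
\THH(\sT_1/\bS[z];\Z_p)\otimes_{\THH(\sO_K/\bS[z];\Z_p)}\THH(\sT_2/\bS[z];\Z_p)\iso \THH(\sT_1\otimes_{\sO_K}\sT_2/\bS[z];\Z_p)
\]
in $\Mod_{\THH(\sO_K/\bS[z];\Z_p)}(\Sp^{BS^1})$.

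For $\TCn$ and $\TP$ I would factor the relevant invariants as $\TCn(-/\bS[z];\Z_p)=(-)^{hS^1}\circ \THH(-/\bS[z];\Z_p)$ and $\TP(-/\bS[z];\Z_p)=(-)^{tS^1}\circ \THH(-/\bS[z];\Z_p)$, where $(-)^{hS^1}=\lim_{BS^1}$ and $(-)^{tS^1}$ are lax symmetric monoidal and exact functors $\Mod_{\THH(\sO_K/\bS[z];\Z_p)}(\Sp^{BS^1})\to \Mod_{\TCn(\sO_K/\bS[z];\Z_p)}(\Sp)$, resp.\ $\Mod_{\TP(\sO_K/\bS[z];\Z_p)}(\Sp)$ --- exactness holding because $(-)^{hS^1}$ preserves finite limits and $(-)^{tS^1}$ is the cofibre of the norm map between the exact functors $(-)_{hS^1}$ and $(-)^{hS^1}$. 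Since $\THH(\sT_1/\bS[z];\Z_p)$ is perfect, the general fact recalled before Proposition~\ref{propdualisperf} --- a lax symmetric monoidal exact functor $F$ sends a perfect $X$ to a perfect $F(1)$-module and makes the lax structure map \eqref{laxmon} an equivalence on $X$ --- applies with $F=(-)^{hS^1}$ (resp.\ $(-)^{tS^1}$), $X=\THH(\sT_1/\bS[z];\Z_p)$, $Y=\THH(\sT_2/\bS[z];\Z_p)$. This shows that $\TCn(\sT_1/\bS[z];\Z_p)$ is perfect over $\TCn(\sO_K/\bS[z];\Z_p)$ (resp.\ $\TP(\sT_1/\bS[z];\Z_p)$ over $\TP(\sO_K/\bS[z];\Z_p)$), and that the natural comparison
\[
\TCn(\sT_1/\bS[z];\Z_p)\otimes_{\TCn(\sO_K/\bS[z];\Z_p)}\TCn(\sT_2/\bS[z];\Z_p)\to \bigl(\THH(\sT_1/\bS[z];\Z_p)\otimes_{\THH(\sO_K/\bS[z];\Z_p)}\THH(\sT_2/\bS[z];\Z_p)\bigr)^{hS^1}
\]
is an equivalence; combining with the $\THH$-level equivalence of the previous paragraph identifies the target with $\TCn(\sT_1\otimes_{\sO_K}\sT_2/\bS[z];\Z_p)$, and the identical argument with $(-)^{tS^1}$ gives the $\TP$ statement.

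The argument is essentially formal, so the points deserving care are: (i) invoking the characterization of ``smooth and proper'' as ``dualizable'' in $\Cat(\sO_K)$ and using the symmetric monoidality of the $p$-completed functor $\THH(-/\bS[z];\Z_p)$ in exactly the form needed for Proposition~\ref{propdualisperf} to apply on the nose; and (ii) checking that, after passing to module categories over $\THH(\sO_K/\bS[z];\Z_p)$, $\TCn(\sO_K/\bS[z];\Z_p)$ and $\TP(\sO_K/\bS[z];\Z_p)$, the functors $(-)^{hS^1}$ and $(-)^{tS^1}$ retain the lax symmetric monoidality and exactness needed to quote the cited formalism. I do not expect any serious obstacle beyond Proposition~\ref{propdualisperf} itself.
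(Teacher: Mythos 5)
Your proposal is essentially the paper's own proof: dualizability in $\Cat(\sO_K)$, symmetric monoidality of $\THH(-/\bS[z];\Z_p)$ to get dualizability of $\THH(\sT_1/\bS[z];\Z_p)$, Proposition~\ref{propdualisperf} to upgrade that to perfectness, and then the lax symmetric monoidal exact functors $(-)^{hS^1}$, $(-)^{tS^1}$ together with the formalism around \eqref{laxmon} to conclude. The paper simply cites \cite[Corollary~I.4.3]{NS18} for the exactness and lax symmetric monoidality of $(-)^{hS^1}$ and $(-)^{tS^1}$ where you spell out the norm cofibre argument; the reasoning is otherwise the same.
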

\begin{proof}
Note that $\infty$-functors $(-)^{hS^1}:\Mod_{\THH(\sO_K/\bS[z];\Z_p)}(\Sp^{BS^1}) \to \Mod_{\TCn(\sO_K/\bS[z];\Z_p)}(\Sp)$ and $(-)^{tS^1}:\Mod_{\THH(\sO_K/\bS[z];\Z_p)}(\Sp^{BS^1}) \to \Mod_{\TCn(\sO_K/\bS[z];\Z_p)}(\Sp)$ are lax symmetric monoidal, exact (see \cite[Corollary I.4.3]{NS18}). It is enough to show $\THH(\sT_1/\bS[z];\Z_p)$ is perfect in $\Mod_{\THH(\sO_K/\bS[z];\Z_p)}(\Sp^{BS^1})$. Since a $\infty$-functor
\[
\THH(-/\bS[z];\Z_p): \Cat(\sO_K) \to\Mod_{\THH(\sO_K/\bS[z];\Z_p)}(\Sp^{BS^1})
\]
is symmetric monoidal, and $\sT_1$ is dualizable in $\Cat(\sO_K)$ (Cf. \cite[Ch. 11]{Lurie17}). Thus $\THH(\sT_1/\bS[z];\Z_p)$ is dualizable in $\Mod_{\THH(\sO_K/\bS[z];\Z_p)}(\Sp^{BS^1})$, by Proposition \ref{propdualisperf}, we obtain the claim. 
\end{proof}

\subsection{Breuil-Kisin module of stable $\infty$-categories} Let us recall Antieau-Mathew-Nikolaus's comparison theorem of symmetric monoidal $\infty$-functors from \cite{KunnethTP}.
\begin{prop}[{\cite[Proposition 4.6]{KunnethTP}}]\label{compmonoid}
Let $\sT,\hat{\sT}$ be symmetric monoidal $\infty$-categories. Let $F_1,F_2:\sT\to\hat{\sT}$ be symmetric monoidal functors and let $t:F_1\Longrightarrow F_2$ be a symmetric monoidal natural transformation. Suppose every object of $\sT$ is dualizable. Then $t$ is an equivalence.
\end{prop}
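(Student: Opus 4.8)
The plan is to encode the natural transformation $t$ as a single symmetric monoidal functor and then invoke the fact that symmetric monoidal functors preserve duals. Equip the functor category $\mathrm{Fun}(\Delta^1,\hat{\sT})$ with the pointwise symmetric monoidal structure; for this structure the evaluation functors $\mathrm{ev}_0,\mathrm{ev}_1\colon \mathrm{Fun}(\Delta^1,\hat{\sT})\to\hat{\sT}$ at the two vertices are symmetric monoidal, and the unit object is the identity arrow $\mathrm{id}_{\mathbf{1}}$. The data of a symmetric monoidal natural transformation $t\colon F_1\Rightarrow F_2$ is equivalent to that of a symmetric monoidal functor $\widetilde{F}\colon \sT\to\mathrm{Fun}(\Delta^1,\hat{\sT})$ with $\mathrm{ev}_0\circ\widetilde{F}\simeq F_1$ and $\mathrm{ev}_1\circ\widetilde{F}\simeq F_2$, so that $\widetilde{F}(X)$ is the arrow $t_X\colon F_1(X)\to F_2(X)$. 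Since $\widetilde{F}$ is symmetric monoidal it carries dualizable objects to dualizable objects, and every $X\in\sT$ is dualizable by hypothesis; hence each $t_X$ is a dualizable object of $\mathrm{Fun}(\Delta^1,\hat{\sT})$.

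The crux is then the following: a morphism $f\colon Y\to Z$ of $\hat{\sT}$, viewed as an object of $\mathrm{Fun}(\Delta^1,\hat{\sT})$, is dualizable for the pointwise tensor product if and only if $Y$ and $Z$ are dualizable and $f$ is an equivalence. Granting this, each $t_X$ is an equivalence and therefore $t$ is an equivalence, which is the assertion. To see the nontrivial direction, let $f^{\ast}\colon Y^{\ast}\to Z^{\ast}$ be a dual of $f$, with unit $\eta\colon \mathrm{id}_{\mathbf{1}}\to f\otimes f^{\ast}$ and counit $\varepsilon\colon f^{\ast}\otimes f\to\mathrm{id}_{\mathbf{1}}$. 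Reading off the two vertex components, $(\eta_0,\varepsilon_0)$ exhibits $Y^{\ast}$ as a dual of $Y$ and $(\eta_1,\varepsilon_1)$ exhibits $Z^{\ast}$ as a dual of $Z$, while the commuting squares that make $\eta$ and $\varepsilon$ morphisms in $\mathrm{Fun}(\Delta^1,\hat{\sT})$ record the compatibilities $\varepsilon_0\simeq\varepsilon_1\circ(f^{\ast}\otimes f)$ and $(f\otimes f^{\ast})\circ\eta_0\simeq\eta_1$. From these, together with the triangle identities at the two vertices, one builds a two-sided inverse of $f$, namely the composite $Z\to Y\otimes Y^{\ast}\otimes Z\to Y\otimes Z^{\ast}\otimes Z\to Y$ obtained from $\eta_0$, $f^{\ast}$, and $\varepsilon_1$, the zigzag identities showing it is inverse to $f$. (Alternatively one can work directly with $t$: strong monoidality of $F_1,F_2$ gives $F_i(X^{\ast})\simeq F_i(X)^{\vee}$, monoidality of $t$ makes $t_{X^{\ast}}$ compatible with the evaluation and coevaluation of $X$, and the transpose of $t_{X^{\ast}}$ is then checked to be inverse to $t_X$.)

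The main obstacle is carrying out this bookkeeping coherently in the $\infty$-categorical setting rather than merely on homotopy categories: one must pin down the pointwise symmetric monoidal structure on $\mathrm{Fun}(\Delta^1,\hat{\sT})$ and the identification of its unit, treat the triangle identities as coherent homotopies, and produce the inverse $s_X$ of $t_X$ together with the homotopies $t_X\circ s_X\simeq \mathrm{id}$ and $s_X\circ t_X\simeq\mathrm{id}$ naturally in $X$. Since this statement is \cite[Proposition 4.6]{KunnethTP}, I would either cite that reference directly or run the argument above, whose only external inputs are the existence of the pointwise symmetric monoidal structure on a functor category and the preservation of dualizable objects by symmetric monoidal functors.
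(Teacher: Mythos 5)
The paper gives no proof of this proposition; it is stated with a direct citation to \cite[Proposition 4.6]{KunnethTP}, so there is no in-text argument to compare against. Your argument is correct. The reduction---encoding $t$ as a symmetric monoidal functor $\widetilde{F}\colon\sT\to\operatorname{Fun}(\Delta^1,\hat{\sT})$ for the pointwise tensor product and then characterizing the dualizable objects of the arrow category---is sound, and the composite $Z\to Y\otimes Y^{\vee}\otimes Z\to Y\otimes Z^{\vee}\otimes Z\to Y$ built from $\eta_0$, $f^{\vee}$ and $\varepsilon_1$ is indeed a two-sided inverse of $f$ by exactly the triangle identities at the two vertices and the two compatibility squares you record. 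One remark: the ``coherence'' worries you raise in the last paragraph are not genuine obstacles, because both ``$X$ is dualizable'' and ``$f$ is an equivalence'' are properties of an $\infty$-category detected in its homotopy category; once one grants that $\operatorname{Fun}(\Delta^1,\hat{\sT})$ carries the pointwise symmetric monoidal structure and that $\widetilde{F}$ is symmetric monoidal (which is the correct $\infty$-categorical formulation of ``$t$ is a symmetric monoidal natural transformation''), the remaining verification may be carried out in $h\hat{\sT}$ without loss. For comparison, the proof in \cite{KunnethTP}---and the classical $1$-categorical argument going back to Saavedra Rivano and Deligne--Milne---is precisely your parenthetical alternative: one shows directly that the transpose of $t_{X^{\vee}}$ is a two-sided inverse of $t_X$. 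That version is shorter; your arrow-category route packages the same bookkeeping into a single application of ``symmetric monoidal functors preserve duals,'' at the cost of first proving the characterization of dualizable arrows. Both are perfectly serviceable.
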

In \cite[Proposition 11.10]{BMS2}, Bhatt-Morrow-Scholze showed the followings: on homotopy groups, 
\[
\pi_i \TCn(\sO_K/\bS[z];\Z_p) \simeq \mS[u,v]/(uv-E)
\]
where $u$ is of degree $2$ and $v$ is of degree $-2$, 
\[
\pi_i \TP(\sO_K/\bS[z];\Z_p) \simeq \mS[\sigma^{\pm}]
\]
where $\sigma$ is of degree $2$. Let $\varphi$ be the endomorphism of $\mS$ determined by the Frobenius on $W$ and $z\mapsto z^{p}$. Scholze-Nikolaus construct two maps
\[
\can,\varphi^{hS^1}_\sT:\TCn(\sT/\bS[z];\Z_p) \rightrightarrows \TP(\sT/\bS[z];\Z_p),
\]
where $\can_\sT$ and $\varphi^{hS^1}_\sT$ are constructed in \cite{NS18}. In \cite[Proposition 11.10]{BMS2}, Bhatt-Morrow-Scholze also showed the morphism 
\[
\can_{\sO_K}:\pi_*\TCn(\sO_K/\bS[z];\Z_p) \to \pi_*\TP(\sO_K/\bS[z];\Z_p)
\]
sends $u$ to $E\sigma$ and $v$ to $\sigma^{-1}$, and the morphism
\[
\varphi^{hS^1}_{\sO_K}:\pi_*\TCn(\sO_K/\bS[z];\Z_p) \to \pi_*\TP(\sO_K/\bS[z];\Z_p)
\]
is $\varphi$-linear map which send $u$ to $\sigma$ and $v$ to $\varphi(E) \sigma^{-1}$. Since $\sigma$ is an invertible element in $\TP(\sO_K/\bS[z];\Z_p)$, $\varphi^{hS^1}_{\sO_K}$ induces a map
\begin{equation}\label{localFrob}
\tilde{\varphi}^{hS^1}_{\sT}:\TCn(\sT/\bS[z];\Z_p)[\frac{1}{u}] \to \TP(\sT/\bS[z];\Z_p)
\end{equation}
for an $\sO_K$-linear stable $\infty$-category $\sT$. Note that on homotopy groups,
\[  \tilde{\varphi}^{hS^1}_{\sO_K}:\pi_*\TCn(\sO_K/\bS[z];\Z_p)[\frac{1}{u}] \to \pi_*\TP(\sO_K/\bS[z];\Z_p)
\]
is given by $\mS[u^{\pm}] \to \mS[\sigma^{\pm}]$ which is $\varphi$-semi-linear and sends $u$ to $\sigma$.
\begin{lemma}\label{periodlemma}
Let $\sT$ be an $\sO_K$-linear idempotent-complete, small smooth proper stable $\infty$-category. Then the followings hold:
\begin{itemize}
  \item[(1)] $\pi_*\TCn(\sT/\bS[z];\Z_p)$ is a finitely generated $\pi_*\TCn(\sO_K/\bS[z];\Z_p)$-module, thus all $\pi_i\TCn(\sT/\bS[z];\Z_p)$ is finitely generated $\mS$-module.
  \item[(2)] There is a natural number $n$ satisfying that for any $j \geq n$, $\pi_j\TCn(\sT/\bS[z];\Z_p)\overset{u\cdot }{\to} \pi_{j+2}\TCn(\sT/\bS[z];\Z_p)$ is an isomorphism. 
\end{itemize}
\end{lemma}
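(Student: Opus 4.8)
The plan for part~(1) is to combine the perfectness statement of Proposition~\ref{TPTCkunneth} with noetherianity of the coefficient ring. Applying that proposition with $\sT_1=\sT$, the spectrum $\TCn(\sT/\bS[z];\Z_p)$ is a perfect module over $A:=\TCn(\sO_K/\bS[z];\Z_p)$, i.e. it lies in the thick subcategory of $\Mod_A(\Sp)$ generated by the unit. By \cite[Proposition 11.10]{BMS2} we have $R:=\pi_*A\simeq\mS[u,v]/(uv-E)$ with $\deg u=2$ and $\deg v=-2$, and since $\mS=W[[z]]$ is noetherian, $R$ is a noetherian graded ring. I would then run the usual induction on the number of cells: for a cofiber sequence $L\to M\to N$ of $A$-modules, $\pi_*M$ fits in exact sequences expressing it through a quotient of $\pi_*L$ and a submodule of $\pi_*N$, so over the noetherian ring $R$ finite generation propagates through cofiber sequences and through retracts; starting from $\pi_*A=R$ itself, this shows that $M_*:=\pi_*\TCn(\sT/\bS[z];\Z_p)$ is a finitely generated graded $R$-module. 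For the remaining assertion, write $M_*=\sum_i R\cdot m_i$ with the $m_i$ homogeneous; then each graded piece is $M_j=\sum_i R_{j-\deg m_i}\cdot m_i$, and every graded component $R_n$ of $R$ is a finitely generated $\mS$-module (it is $\mS u^{n/2}$, $\mS v^{-n/2}$, or $0$), so each $\pi_j\TCn(\sT/\bS[z];\Z_p)=M_j$ is finitely generated over $\mS$.

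For part~(2), keep $R$ and $M_*$ as above, so $M_*$ is a finitely generated graded $R$-module; I want to show $u\cdot\colon M_j\to M_{j+2}$ is bijective for $j\gg 0$, detecting surjectivity and injectivity separately. The mechanism is that modding out $u$ collapses the degree range: $R/uR\simeq(\mS/E\mS)[v]\simeq\sO_K[v]$ with $v$ in degree $-2$, hence is concentrated in degrees $\le 0$; and, since $E$ is a nonzerodivisor in the domain $\mS$, $u$ is a nonzerodivisor in $R$, so $R/u^kR$ has a length-$k$ filtration whose subquotients are shifts $(R/uR)(-2i)$ for $0\le i<k$, and therefore $R/u^kR$ is concentrated in degrees $\le 2k-2$. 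Now $M_*/uM_*$ is a finitely generated graded $R/uR$-module, hence vanishes in degrees $>N_3$ for some $N_3$; thus $M_{j+2}/uM_j=(M_*/uM_*)_{j+2}=0$ for $j+2>N_3$, i.e. $u\cdot$ is surjective there. For injectivity, set $T:=\{m\in M_*:u^km=0\text{ for some }k\ge 0\}$; this is an increasing union of submodules of the noetherian module $M_*$, hence $T=\ker(u^{k_0}\cdot\colon M_*\to M_*)$ for some $k_0$, so $T$ is a finitely generated graded $R/u^{k_0}R$-module and vanishes in degrees $>N_1$ for some $N_1$; since $\ker(u\cdot\colon M_j\to M_{j+2})\subseteq T_j$, the map $u\cdot$ is injective for $j>N_1$. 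Choosing $n$ larger than both $N_1$ and $N_3$ then gives part~(2).

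The step that requires genuine care — as opposed to a new idea — is the passage from finite generation over $R$ to the degreewise and asymptotic conclusions. Since $R$ is not module-finite over $\mS$, part~(1) really uses the grading; and in part~(2) the operator $u\cdot$ need not be injective on an arbitrary finitely generated $R$-module even though $u$ is a nonzerodivisor in $R$, so one cannot simply embed $M_*$ into $M_*[u^{-1}]$ over $R[u^{-1}]\simeq\mS[u^{\pm1}]$ and invoke periodicity of finitely generated graded $\mS[u^{\pm1}]$-modules. Instead the torsion module $T$ and the cotorsion module $M_*/uM_*$ must be controlled directly, which is precisely where the explicit, bounded-above description of $R/u^kR$ enters. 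Everything upstream is imported: perfectness from Proposition~\ref{TPTCkunneth} and the ring computation $\pi_*\TCn(\sO_K/\bS[z];\Z_p)\simeq\mS[u,v]/(uv-E)$ from \cite[Proposition 11.10]{BMS2}.
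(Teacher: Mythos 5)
Your proof is correct and follows the same strategy the paper gestures at (perfectness of $\TCn(\sT/\bS[z];\Z_p)$ as a $\TCn(\sO_K/\bS[z];\Z_p)$-module, plus the explicit computation $\pi_*\TCn(\sO_K/\bS[z];\Z_p)\simeq\mS[u,v]/(uv-E)$ from \cite[Proposition~11.10]{BMS2}). The paper's own proof is only two sentences and essentially asserts that claim~(2) follows from claim~(1) together with $u\cdot$ being an isomorphism on $\pi_j$ for $j\ge 0$; your treatment of the $u$-torsion submodule $T$ via noetherianity and of $M_*/uM_*$ as a graded $\sO_K[v]$-module supplies precisely the commutative-algebra bookkeeping that makes that deduction rigorous, and your caveat that one cannot simply embed $M_*$ into $M_*[u^{-1}]$ is a genuine point.
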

\begin{proof}
Claim (1) directly follows from Proposition ~\ref{TPTCkunneth}. For any $j \geq 0$, by the calculation of $\pi_* \TCn(\sO_K/\bS[z];\Z_p)$ (see \cite[Proposition 11.10]{BMS2}), we know $\pi_j\TCn(\sO_K/\bS[z];\Z_p)\overset{u\cdot}{\to} \pi_{j+2}\TCn(\sO_K/\bS[z];\Z_p)$ is an isomorphism. These yields claim (2) by claim (1).
\end{proof}
\begin{thm}\label{BKtheorem}
Let $\sT$ be an $\sO_K$-linear idempotent-complete, small smooth proper stable $\infty$-category. Then there is a natural number $n$ such that the homotopy groupy group $\pi_i \TCn(\sT/\bS[z];\Z_p)$ is a Breuil-Kisin module for any $i\geq n$, where Frobenius comes from cyclotomic Frobenius map.
\end{thm}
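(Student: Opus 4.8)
For $M := \pi_i\TCn(\sT/\bS[z];\Z_p)$ with $i$ large, the plan is to verify the two defining properties of a Breuil--Kisin module: that $M$ is a finitely generated $\mS$-module, and that the cyclotomic Frobenius induces an isomorphism $\varphi_M\colon M\otimes_{\mS,\varphi}\mS[\tfrac1E]\xrightarrow{\ \sim\ }M[\tfrac1E]$. The first point is exactly Lemma~\ref{periodlemma}(1). For the second, the idea is to compare $\TCn(\sT/\bS[z];\Z_p)$ with $\TP(\sT/\bS[z];\Z_p)$ through the two maps $\can_\sT$ and $\varphi^{hS^1}_\sT$ and to produce $\varphi_M$ as a localization of ``$\can_\sT^{-1}\circ\varphi^{hS^1}_\sT$''; over the coefficient category $\sO_K$ this is precisely the content of \cite[Proposition~11.10]{BMS2}, so the task is to propagate it to an arbitrary smooth proper $\sT$.

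For the propagation I would argue as follows. Since $\THH(\sT/\bS[z];\Z_p)$ is perfect over $\THH(\sO_K/\bS[z];\Z_p)$ (Proposition~\ref{TPTCkunneth}), the lax symmetric monoidal exact functors $(-)^{hS^1}$ and $(-)^{tS^1}$ — as well as their localizations at $u$ and at $E$ and the Frobenius twist $\varphi^*(-):=(-)\otimes_{\mS,\varphi}\mS$ — restrict to honestly symmetric monoidal functors on $\Catsat(\sO_K)$; this is where perfectness (Proposition~\ref{propdualisperf}) enters, since it makes the lax structure maps equivalences. Now $\can$ gives a symmetric monoidal natural transformation from $\TCn(-/\bS[z];\Z_p)[\tfrac1E]$ to $\TP(-/\bS[z];\Z_p)[\tfrac1E]$, and on the unit $\sO_K$ it is an equivalence by the computation $\can_{\sO_K}\colon u\mapsto E\sigma,\ v\mapsto\sigma^{-1}$ of \cite[Proposition~11.10]{BMS2}; similarly the $\mS$-linearization of $\tilde{\varphi}^{hS^1}$ gives a symmetric monoidal natural transformation from $\varphi^*\bigl(\TCn(-/\bS[z];\Z_p)[\tfrac1u]\bigr)$ to $\TP(-/\bS[z];\Z_p)$, which on $\sO_K$ is an equivalence because $\tilde{\varphi}^{hS^1}_{\sO_K}\colon u\mapsto\sigma$ linearizes to an isomorphism. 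Every object of $\Catsat(\sO_K)$ is dualizable, so Proposition~\ref{compmonoid} promotes both natural transformations to equivalences:
\[
\can_\sT\colon \TCn(\sT/\bS[z];\Z_p)[\tfrac1E]\xrightarrow{\ \sim\ }\TP(\sT/\bS[z];\Z_p)[\tfrac1E],
\]
\[
\varphi^*\bigl(\TCn(\sT/\bS[z];\Z_p)[\tfrac1u]\bigr)\xrightarrow{\ \sim\ }\TP(\sT/\bS[z];\Z_p).
\]

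Finally, fix $n$ as in Lemma~\ref{periodlemma}(2), so that $u$ acts invertibly on $\pi_j\TCn(\sT/\bS[z];\Z_p)$ for $j\geq n$. Then for every $i\geq n$ the localization maps $M\to\pi_i\bigl(\TCn(\sT/\bS[z];\Z_p)[\tfrac1u]\bigr)$ and $M[\tfrac1E]\to\pi_i\bigl(\TCn(\sT/\bS[z];\Z_p)[\tfrac1E]\bigr)$ are isomorphisms. Taking $\pi_i$ of the two displayed equivalences — using that $\varphi\colon\mS=W[[z]]\to\mS$ is finite free, hence faithfully flat, so $\varphi^*$ is exact and commutes with homotopy — yields, for all $i\geq n$, natural isomorphisms of $\mS[\tfrac1E]$-modules
\[
M\otimes_{\mS,\varphi}\mS[\tfrac1E]\ \xrightarrow{\ \sim\ }\ \pi_i\TP(\sT/\bS[z];\Z_p)[\tfrac1E]\ \xleftarrow{\ \sim\ }\ M[\tfrac1E],
\]
the left one induced by $\varphi^{hS^1}_\sT$ and the right one by $\can_\sT$; their composite is the desired Frobenius $\varphi_M$, which by construction is induced by the cyclotomic Frobenius. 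Together with Lemma~\ref{periodlemma}(1), this exhibits $\pi_i\TCn(\sT/\bS[z];\Z_p)$ as a Breuil--Kisin module for all $i\geq n$.

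The step I expect to be the main obstacle is the second paragraph: one must check carefully that the localized $\THH^{hS^1}$/$\THH^{tS^1}$-functors together with the Frobenius twist genuinely assemble into symmetric monoidal functors on $\Catsat(\sO_K)$ and that $\can$ and the linearized $\tilde{\varphi}^{hS^1}$ are symmetric monoidal natural transformations, so that Proposition~\ref{compmonoid} is applicable; once this formal point is secured, all the concrete input is the coefficient computation \cite[Proposition~11.10]{BMS2}, with no further $\THH(\sO_K/\bS[z];\Z_p)$-calculation required.
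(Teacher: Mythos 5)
Your proposal is correct and follows essentially the same route as the paper: use Proposition~\ref{TPTCkunneth} and Proposition~\ref{propdualisperf} to show that both sides of the $\tilde\varphi^{hS^1}$-base-change map and of the $E$-localized $\can$ map are symmetric monoidal functors on $\Catsat(\sO_K)$, invoke Proposition~\ref{compmonoid} to upgrade each natural transformation to an equivalence, pass to homotopy using the flatness of $\varphi\colon\mS\to\mS$, and finish with Lemma~\ref{periodlemma} to identify $\pi_i\TCn(\sT/\bS[z];\Z_p)$ with its $u$-localization in large degrees. The paper packages the Frobenius-twisted side as a scalar extension $\TCn(\sT/\bS[z];\Z_p)[\tfrac1u]\otimes_{\TCn(\sO_K/\bS[z];\Z_p)[\tfrac1u],\tilde\varphi^{hS^1}_{\sO_K}}\TP(\sO_K/\bS[z];\Z_p)$ rather than as ``$\varphi^*$ of the functor,'' but the content is identical.
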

\begin{proof}
The morphism $\tilde{\varphi}^{hS^1}_\sT$ induces a morphism of $\TP(\sO_K/\bS[z];\Z_p)$-module:
\begin{equation}\label{TCTP}
\TCn(\sT/\bS[z];\Z_p) [\frac{1}{u}]\otimes_{\TCn(\sO_K/\bS[z];\Z_p)[\frac{1}{u}],\tilde{\varphi}^{hS^1}_{\sO_K}} \TP(\sO_K/\bS[z];\Z_p) \to \TP(\sT/\bS[z];\Z_p).
\end{equation}
By proposition~\ref{TPTCkunneth}, both sides of the map \eqref{TCTP} yields symmetric monoidal functors from $\Catsat(\sO_K)$ to $\Mod_{\TP(\sO_K/\bS[z];\Z_p)}(\Sp)$, and the map \eqref{TCTP} yields a symmetric monoidal natural transformation between them. By Proposition~\ref{compmonoid}, the morphism \eqref{TCTP} is an equivalence. On homotopy groups, $\tilde{\varphi}^{hS^1}_{\sO_K}$ is $\varphi:\mS\to \mS$ on each degree. Since $\varphi$ is flat, one has an isomorphism
\begin{equation}\label{preTCnKisin}
\pi_i\TCn(\sT/\bS[z];\Z_p)[\frac{1}{u}] \otimes_{\mS,\varphi} \mS \simeq \pi_i \TP(\sT/\bS[z];\Z_p)
\end{equation}
for any $i$. By Lemma~\ref{periodlemma} (2), one obtains an isomorphism 
\[
\pi_i\TCn(\sT/\bS[z];\Z_p) \simeq \pi_i\TCn(\sT/\bS[z];\Z_p)[\frac{1}{u}] 
\]
for any $i \geq n$, thus we have an isomorphism on homotopy groups:
\begin{equation}\label{TCnKisin}
\pi_i\TCn(\sT/\bS[z];\Z_p) \otimes_{\mS,\varphi} \mS \simeq \pi_i \TP(\sT/\bS[z];\Z_p)
\end{equation}
for any $i \geq n$.

After localization $E\in \mS \simeq \pi_0 \TCn(\sO_K/\bS[z];\Z_p)$, the morphism $\can_\sT$ induces a morphism of $\TP(\sO_K/\bS[z];\Z_p)[\frac{1}{E}]$-module:
\begin{equation}\label{TCTPcan}
\TCn(\sT/\bS[z];\Z_p)[\frac{1}{E}] \otimes_{\TCn(\sO_K/\bS[z];\Z_p)[\frac{1}{E}],\can_{\sO_K}} \TP(\sO_K/\bS[z];\Z_p)[\frac{1}{E}] \to \TP(\sT/\bS[z];\Z_p)[\frac{1}{E}].
\end{equation}
Both sides of the map \eqref{TCTPcan} yield symmetric monoidal functors from $\Catsat(\sO_K)$ to $\Mod_{\TP(\sO_K/\bS[z];\Z_p)[\frac{1}{E}]}(\Sp)$, and the map \eqref{TCTPcan} yield a symmetric monoidal natural transformation between them. Thus the morphism \eqref{TCTPcan} is an equivalence. Note that the morphism 
\[
\can_{\sO_K}:\pi_*\TCn(\sO_K/\bS[z];\Z_p)[\frac{1}{E}] \to \pi_*\TP(\sO_K/\bS[z];\Z_p)[\frac{1}{E}]
\]
is an isomorphism (see \cite[Proposition 11.10]{BMS2}). This yields an isomorphism
\begin{equation}\label{canisom}
\can_{\sT}[\frac{1}{E}]:\pi_*\TCn(\sT/\bS[z];\Z_p)[\frac{1}{E}] \simeq \pi_*\TP(\sT/\bS[z];\Z_p)[\frac{1}{E}].
\end{equation}
Combine \eqref{canisom} with \eqref{TCnKisin}, we obtain an isomorphism
\[
\pi_i\TCn(\sT/\bS[z];\Z_p) \otimes_{\mS,\varphi} \mS [\frac{1}{E}]\overset{\eqref{TCnKisin}}{\simeq} \pi_i \TP(\sT/\bS[z];\Z_p)[\frac{1}{E}] \overset{\eqref{canisom}}{\simeq} \pi_i\TCn(\sT/\bS[z];\Z_p)[\frac{1}{E}] 
\]
for any $i \geq n$.
\end{proof} 

\subsection{Comparison between $\TCn(\sT/\bS[z];\Z_p)$ and $\TP(\sT_{\sO_\Cu};\Z_p)$} In this section, for an $\sO_K$-linear idempotent-complete, small smooth proper stable $\infty$-category $\sT$, we will compare between $\TCn(\sT/\bS[z];\Z_p)$ and $\TP(\sT_{\sO_\Cu};\Z_p)$. Denote $K_\infty=K(\pi^{1/p^{\infty}})$ and by $\sO_{K_\infty}$ by the integer ring of $K_\infty$.
\begin{lemma}[{\cite[Corollary 11.8]{BMS2}}]\label{BMSTHHlemma}
 For any $\sO_K$-linear stable $\infty$-category $\mathcal{W}$, the natural map
 \[ \THH(\mathcal{W}_{\sO_\Cu};\Z_p) 
 \to \THH(\mathcal{W}_{\sO_\Cu}/\bS[z^{1/p^{\infty}}];\Z_p) 
 \]
 is an equivalence which is compatible with $S^1$-action, and $G_{K_\infty}$-action. In particular, the natural map
 \[ \TP(\mathcal{W}_{\sO_\Cu};\Z_p) 
 \to \TP(\mathcal{W}_{\sO_\Cu}/\bS[z^{1/p^{\infty}}];\Z_p)
 \]
 is an equivalence which is compatible with $G_{K_\infty}$-action.
\end{lemma}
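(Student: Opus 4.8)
The plan is to categorify the ring-level fact already used above: since $\THH(\bS[z^{1/p^{\infty}}];\Z_p)\to \bS[z^{1/p^{\infty}}]^{\wedge}_p$ is an equivalence by \cite[Proposition 11.7]{BMS2}, relative and absolute $p$-complete $\THH$ over $\bS[z^{1/p^{\infty}}]$ agree, and this passes to $\sO_K$-linear stable $\infty$-categories via the base change formula for relative $\THH$.

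First I would pin down the linear and equivariant structures. A compatible system $(\pi^{1/p^n})_{n\geq 0}$ of $p$-power roots of $\pi$ lies in $\ol{K}\subset\Cu$, so $\sO_\Cu$ is naturally an $\bS[z^{1/p^{\infty}}]$-algebra via $z^{1/p^n}\mapsto \pi^{1/p^n}$; hence $\mathcal{W}_{\sO_\Cu}:=\mathcal{W}\otimes_{\sO_K}\sO_\Cu$ is $\sO_\Cu$-linear, a fortiori $\bS[z^{1/p^{\infty}}]$-linear. The group $G_{K_\infty}=\text{Gal}(\ol{K}/K_\infty)$ fixes every $\pi^{1/p^n}$, so it acts on $\sO_\Cu$ by $\bS[z^{1/p^{\infty}}]$-algebra automorphisms; this is exactly why the statement is about $G_{K_\infty}$ rather than the full $G_K$, which permutes the roots $\pi^{1/p^n}$ by $p$-power roots of unity.

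Next I would invoke the base change formula for relative topological Hochschild homology (cf. \cite[Section 11]{BMS2}): for an $\mathbb{E}_\infty$-ring $B$ and a $B$-linear stable $\infty$-category $\mathcal{V}$ there is a natural $S^1$-equivariant equivalence $\THH(\mathcal{V}/B)\simeq \THH(\mathcal{V})\otimes_{\THH(B)}B$, where $\THH(\mathcal{V})$ is a $\THH(B)$-module through $B$-linearity and lax monoidality of $\THH$, $B$ carries the trivial $S^1$-action, and under this identification the canonical map $\THH(\mathcal{V})\to\THH(\mathcal{V}/B)$ becomes base change along the augmentation $\THH(B)\to B$. Applying this with $B=\bS[z^{1/p^{\infty}}]$ and $\mathcal{V}=\mathcal{W}_{\sO_\Cu}$, $p$-completing, and then invoking \cite[Proposition 11.7]{BMS2} to see that $\THH(\bS[z^{1/p^{\infty}}];\Z_p)\to \bS[z^{1/p^{\infty}}]^{\wedge}_p$ is an equivalence, the base change is an equivalence, so
\[
\THH(\mathcal{W}_{\sO_\Cu};\Z_p)\longrightarrow \THH(\mathcal{W}_{\sO_\Cu}/\bS[z^{1/p^{\infty}}];\Z_p)
\]
is an $S^1$-equivariant equivalence. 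Since every functor in play ($-\otimes_{\sO_K}\sO_\Cu$, $\THH$, $(-)\otimes_{\THH(B)}B$, and $p$-completion) is functorial for $\sO_\Cu$-algebra automorphisms, this equivalence is $G_{K_\infty}$-equivariant. Applying $(-)^{tS^1}$ then yields the asserted $G_{K_\infty}$-equivariant equivalence $\TP(\mathcal{W}_{\sO_\Cu};\Z_p)\simeq\TP(\mathcal{W}_{\sO_\Cu}/\bS[z^{1/p^{\infty}}];\Z_p)$ (here one uses the standard compatibility of the Tate construction with $p$-completion on bounded-below spectra, as in \cite{BMS2}).

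The genuinely hard input is \cite[Proposition 11.7]{BMS2}, which we may cite; granting it, the only delicate point is the bookkeeping of the $S^1$- and $G_{K_\infty}$-actions through the base change formula — in particular verifying that the relative-$\THH$ base change equivalence is functorial enough to carry the cyclotomic $S^1$-action together with the Galois action, and recognizing that restricting from $G_K$ to $G_{K_\infty}$ is precisely what makes the $\bS[z^{1/p^{\infty}}]$-linear structure on $\mathcal{W}_{\sO_\Cu}$ Galois-equivariant.
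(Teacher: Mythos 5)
Your proposal is correct and follows essentially the same route as the paper: identify the map with base change along the augmentation $\THH(\bS[z^{1/p^{\infty}}];\Z_p)\to\bS[z^{1/p^{\infty}}]^{\wedge}_p$, invoke \cite[Proposition 11.7]{BMS2} to see that augmentation is an equivalence, and observe that $G_{K_\infty}$-equivariance holds because every $\pi^{1/p^n}$ already lies in $K_\infty$. Your write-up is somewhat more explicit about why the $\bS[z^{1/p^\infty}]$-linear structure and the $G_{K_\infty}$-action are compatible, which is a useful clarification but not a different argument.
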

\begin{proof}
The morphism $\bS[z^{1/p^{\infty}}] \to \sO_{K_\infty}~|~ z^{1/p^n}\mapsto \pi^{1/p^n}$ fits into the following commutative diagram
\begin{equation}\label{diagram}
\xymatrix{
 &\sO_K\ar[r] &\sO_{K_\infty} \ar[r] & \sO_{\Cu}\\
\bS\ar[r] &\bS[z]\ar[r] \ar[u]& \bS[z^{1/p^{\infty}}]\ar[u] &
}
\end{equation}
The diagram yields a map 
\[
\THH(\mathcal{W}_{\sO_\Cu};\Z_p) 
 \to \THH(\mathcal{W}_{\sO_\Cu}/\bS[z^{1/p^{\infty}}];\Z_p)\simeq \THH(\mathcal{W}_{\sO_\Cu};\Z_p)\otimes_{\THH(\bS[z^{1/p^{\infty}}];\Z_p)}\bS[z^{1/p^{\infty}}]^{\wedge}_{p}
\]
which is compatible with $S^1$-action. By \cite[Proposition 11.7]{BMS2}, the natural map $\THH(\bS[z^{1/p^{\infty}}];\Z_p) \to \bS[z^{1/p^{\infty}}]^{\wedge}_{p}$ is an equivalence. Since $\pi^{1/p^n}$ is in $K_\infty$ for any $n$, thus the equivalence is $G_{K_\infty}$-equivariant.
\end{proof}

\begin{prop}\label{THHOCperfect}
  For an $\sO_K$-linear idempotent-complete, small smooth proper stable $\infty$-category $\sT$, $\THH(\sT_{\sO_\Cu}/\bS[z^{1/p^{\infty}}];\Z_p)$ is a perfect object in $\Mod_{\THH({\sO_\Cu}/\bS[z^{1/p^{\infty}}];\Z_p)}(\Sp^{BS^1})$. In particular, $\TP(\sT_{\sO_\Cu}/\bS[z^{1/p^{\infty}}];\Z_p)$ is a perfect object in $\Mod_{\TP({\sO_\Cu}/\bS[z^{1/p^{\infty}}];\Z_p)}(\Sp)$.
\end{prop}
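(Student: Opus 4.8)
The plan is to mimic the proof of Proposition~\ref{TPTCkunneth}, with $\bS[z]/\sO_K$ replaced by $\bS[z^{1/p^\infty}]/\sO_\Cu$, the key input being that the base ring $\THH(\sO_\Cu/\bS[z^{1/p^\infty}];\Z_p)$ has homotopy concentrated in even degrees and forming a sufficiently nice ring. First I would record the computation of $\pi_*\THH(\sO_\Cu/\bS[z^{1/p^\infty}];\Z_p)$. By the base-change equivalence \eqref{basechange} applied over $\sO_\Cu$ (or equivalently by \cite[Proposition~11.10, 11.7]{BMS2}), together with Lemma~\ref{BMSTHHlemma}, one gets $\THH(\sO_\Cu/\bS[z^{1/p^\infty}];\Z_p)\simeq \THH(\sO_\Cu;\Z_p)$, and by the Bökstedt-type computation in \cite[Section~6]{BMS2} one has $\pi_*\THH(\sO_\Cu;\Z_p)\simeq \sO_\Cu[u]$ with $\deg u = 2$. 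Thus $\pi_*\THH(\sO_\Cu/\bS[z^{1/p^\infty}];\Z_p)$ is a polynomial ring over the (non-noetherian, non-discrete) valuation ring $\sO_\Cu$, concentrated in even degrees.

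Next, I would invoke the dualizability argument exactly as in Proposition~\ref{TPTCkunneth}: the functor $\THH(-/\bS[z^{1/p^\infty}];\Z_p):\Cat(\sO_\Cu)\to \Mod_{\THH(\sO_\Cu/\bS[z^{1/p^\infty}];\Z_p)}(\Sp^{BS^1})$ is symmetric monoidal, and $\sT_{\sO_\Cu}$ is dualizable in $\Cat(\sO_\Cu)$ since $\sT$ is smooth and proper over $\sO_K$ (smoothness and properness are preserved under base change along $\sO_K\to\sO_\Cu$). Hence $\THH(\sT_{\sO_\Cu}/\bS[z^{1/p^\infty}];\Z_p)$ is dualizable in $\Mod_{\THH(\sO_\Cu/\bS[z^{1/p^\infty}];\Z_p)}(\Sp^{BS^1})$. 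It then remains to pass from dualizable to perfect, and to deduce the statement for $\TP$ by applying the lax symmetric monoidal exact functor $(-)^{tS^1}$ as in the proof of Proposition~\ref{TPTCkunneth}.

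The main obstacle is the step ``dualizable $\Rightarrow$ perfect'': Proposition~\ref{propdualisperf} was proved by citing \cite[Theorem~2.15]{KunnethTP}, whose hypothesis is that $\pi_*$ of the base ring is \emph{regular noetherian of finite Krull dimension}, and $\sO_\Cu[u]$ is neither noetherian nor of finite Krull dimension. So I cannot cite Proposition~\ref{propdualisperf} verbatim. The fix I would pursue is to avoid needing the full strength of that result: since $\sT$ is defined over $\sO_K$ and $\sO_K$ is a discrete valuation ring with $\pi_*\THH(\sO_K/\bS[z];\Z_p)\simeq\sO_K[u]$ regular noetherian of Krull dimension $2$, Proposition~\ref{TPTCkunneth} already gives that $\THH(\sT/\bS[z];\Z_p)$ is perfect — i.e. lies in the thick subcategory generated by the unit — in $\Mod_{\THH(\sO_K/\bS[z];\Z_p)}(\Sp^{BS^1})$. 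Perfectness, being a statement about membership in the thick subcategory generated by the unit, is preserved under the base-change (symmetric monoidal, colimit-preserving) functor $\Mod_{\THH(\sO_K/\bS[z];\Z_p)}(\Sp^{BS^1})\to \Mod_{\THH(\sO_\Cu/\bS[z^{1/p^\infty}];\Z_p)}(\Sp^{BS^1})$ induced by the diagram \eqref{diagram} together with Lemma~\ref{BMSTHHlemma}; under this functor $\THH(\sT/\bS[z];\Z_p)$ is carried to $\THH(\sT_{\sO_\Cu}/\bS[z^{1/p^\infty}];\Z_p)$ (again by base change of $\THH$ along $\sO_K\to\sO_\Cu$ and Lemma~\ref{BMSTHHlemma}). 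This yields perfectness directly, and then applying $(-)^{tS^1}$ gives that $\TP(\sT_{\sO_\Cu}/\bS[z^{1/p^\infty}];\Z_p)$ is perfect over $\TP(\sO_\Cu/\bS[z^{1/p^\infty}];\Z_p)$. The one point to check carefully is that the relevant base-change square of $\THH$'s commutes compatibly with the $S^1$-actions, which follows from functoriality of $\THH$ applied to \eqref{diagram}.
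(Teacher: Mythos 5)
Your final argument --- base-changing the perfectness of $\THH(\sT/\bS[z];\Z_p)$ over $\THH(\sO_K/\bS[z];\Z_p)$ along an exact, unit-preserving functor to $\Mod_{\THH(\sO_\Cu/\bS[z^{1/p^\infty}];\Z_p)}(\Sp^{BS^1})$ --- is exactly the paper's proof, which merely factors it as a two-step base change (first $\sO_K\to\sO_\Cu$ over $\bS[z]$, then $\bS[z]\to\bS[z^{1/p^\infty}]$) rather than a single one. Your preliminary observation that the naive dualizable-implies-perfect route breaks down because $\pi_*\THH(\sO_\Cu/\bS[z^{1/p^\infty}];\Z_p)\simeq\sO_\Cu[u]$ is not noetherian is a correct and useful remark that the paper leaves implicit.
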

\begin{proof}
  We already know $\THH(\sT/\bS[z];\Z_p)$ is a perfect object in $\Mod_{\THH(\sO_K/\bS[z];\Z_p)}(\Sp^{BS^1})$ (see Proposition~\ref{TPTCkunneth}). Since the functor 
  \[
  -\otimes_{\THH(\sO_K/\bS[z];\Z_p)}\THH(\sO_{\Cu}/\bS[z];\Z_p) :\Mod_{\THH(\sO_K/\bS[z];\Z_p)}(\Sp^{BS^1}) \to \Mod_{\THH(\sO_{\Cu}/\bS[z];\Z_p)}(\Sp^{BS^1})
  \]
  is exact and sends $\THH(\sT/\bS[z];\Z_p)$ to $\THH(\sT_{\sO_\Cu}/\bS[z];\Z_p)$, $\THH(\sT_{\sO_\Cu}/\bS[z];\Z_p)$ is a perfect object in $\Mod_{\THH(\sO_{\Cu}/\bS[z];\Z_p)}(\Sp^{BS^1})$. Since the functor 
  \[
  -\otimes_{\THH(\bS[z^{1/p^{\infty}}]/\bS[z];\Z_p)}\bS[z^{1/p^{\infty}}]^{\wedge}_p :\Mod_{\THH(\sO_{\Cu}/\bS[z];\Z_p)}(\Sp^{BS^1}) \to \Mod_{\THH(\sO_{\Cu}/\bS[z^{1/p^\infty}];\Z_p)}(\Sp^{BS^1})
  \]
  is exact and sends $\THH(\sT_{\sO_\Cu}/\bS[z];\Z_p)$ to $\THH(\sT_{\sO_\Cu}/\bS[z^{1/p^{\infty}}];\Z_p)$, thus $\THH(\sT_{\sO_\Cu}/\bS[z^{1/p^{\infty}}];\Z_p)$ is a perfect object in $\Mod_{\THH(\sO_{\Cu}/\bS[z^{1/p^\infty}];\Z_p)}(\Sp^{BS^1})$.
\end{proof}

Lemma~\ref{BMSTHHlemma} and Proposition~\ref{THHOCperfect} imply the following.
\begin{prop}\label{THHOCbSperf}
  For an $\sO_K$-linear idempotent-complete, small smooth proper stable $\infty$-category $\sT$, $\THH(\sT_{\sO_\Cu};\Z_p)$ is a perfect object in $\Mod_{\THH({\sO_\Cu};\Z_p)}(\Sp^{BS^1})$.
\end{prop}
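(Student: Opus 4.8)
The plan is to transport perfectness across the equivalence of Lemma~\ref{BMSTHHlemma}. First I would apply that lemma to the base, i.e. to $\mathcal{W}=\perf(\sO_K)$, which yields an equivalence of $\mathbb{E}_\infty$-rings with $S^1$-action
\[
\THH(\sO_\Cu;\Z_p)\iso \THH(\sO_\Cu/\bS[z^{1/p^\infty}];\Z_p).
\]
Extension of scalars along this equivalence gives a symmetric monoidal equivalence of module $\infty$-categories
\[
\Mod_{\THH(\sO_\Cu;\Z_p)}(\Sp^{BS^1})\iso \Mod_{\THH(\sO_\Cu/\bS[z^{1/p^\infty}];\Z_p)}(\Sp^{BS^1})
\]
which in particular carries the unit to the unit.

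Next I would use the naturality of Lemma~\ref{BMSTHHlemma} in $\mathcal{W}$: taking $\mathcal{W}=\sT$, the equivalence $\THH(\sT_{\sO_\Cu};\Z_p)\iso\THH(\sT_{\sO_\Cu}/\bS[z^{1/p^\infty}];\Z_p)$ is linear over the base equivalence above, so under the displayed equivalence of module categories the object $\THH(\sT_{\sO_\Cu};\Z_p)$ is identified with $\THH(\sT_{\sO_\Cu}/\bS[z^{1/p^\infty}];\Z_p)$. By Proposition~\ref{THHOCperfect} the latter lies in the thick subcategory generated by the unit of $\Mod_{\THH(\sO_\Cu/\bS[z^{1/p^\infty}];\Z_p)}(\Sp^{BS^1})$; since a symmetric monoidal equivalence preserves the unit and sends thick subcategories to thick subcategories, $\THH(\sT_{\sO_\Cu};\Z_p)$ belongs to the thick subcategory generated by the unit of $\Mod_{\THH(\sO_\Cu;\Z_p)}(\Sp^{BS^1})$, i.e. it is perfect, as desired.

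The only point requiring care — and the mild obstacle here — is bookkeeping the compatibilities in Lemma~\ref{BMSTHHlemma}: one needs the equivalence there not merely as $S^1$-spectra but as $S^1$-equivariant $\mathbb{E}_\infty$-rings in the case $\mathcal{W}=\perf(\sO_K)$, and as a morphism of modules over it in the case $\mathcal{W}=\sT$, so that the passage to module categories and the identification of objects are both legitimate. This is already built into the construction of $\THH(-;\Z_p)$ as a (lax) symmetric monoidal functor together with the base-change argument used in the proof of Lemma~\ref{BMSTHHlemma}, so no new input is needed beyond assembling these naturalities. (If wanted, applying the lax symmetric monoidal, exact functor $(-)^{tS^1}$ then also shows $\TP(\sT_{\sO_\Cu};\Z_p)$ is perfect over $\TP(\sO_\Cu;\Z_p)$.)
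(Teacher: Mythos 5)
Your proof is correct and follows exactly the route the paper takes: the paper simply cites Lemma~\ref{BMSTHHlemma} and Proposition~\ref{THHOCperfect} without elaboration, and you have correctly filled in the bookkeeping (applying the lemma both to the base $\perf(\sO_K)$ and to $\sT$, then transporting perfectness across the resulting symmetric monoidal equivalence of module categories).
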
 

The following is a non-commutative version of the comparison theorem between $R\Gamma_\mS$ and $R\Gamma_{\Ainf}$ in \cite[Theorem 1.2 (1)]{BMS2}
\begin{thm}\label{TPTCcomp}
Let $\sT$ be an $\sO_K$-linear idempotent-complete, small smooth proper stable $\infty$-category. Then there is a natural number $n$ satisfying that there is a $G_{K_\infty}$-equivariant isomorphism
\[
\pi_i \TCn(\sT/\bS[z];\Z_p)\otimes_{\mS,\ol{\phi}} \Ainf \simeq \pi_i \TP(\sT_{\sO_\Cu}/\bS[z^{1/p^{\infty}}];\Z_p)\simeq \pi_i \TP(\sT_{\sO_\Cu};\Z_p)
\]
for any $i \geq n$, where $\ol{\phi}$ is the map which sends $z$ to $[\pi^{\flat}]^p$ and is the Frobenius on $W$, and $g\in G_{K_\infty}$ acts on $1\otimes g$ on left hand side.
\end{thm}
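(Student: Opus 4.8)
The plan is to deduce Theorem~\ref{TPTCcomp} by combining the Breuil--Kisin comparison of Theorem~\ref{BKtheorem} with the base-change identification of $\THH$ over $\bS[z^{1/p^\infty}]$ from Lemma~\ref{BMSTHHlemma}, exactly parallelling Bhatt--Morrow--Scholze's proof of \cite[Theorem~1.2(1)]{BMS2} but replacing the cohomology ring with the homotopy groups of $\TCn$ and $\TP$ of $\sT$. First I would fix $n$ as in Theorem~\ref{BKtheorem} and Lemma~\ref{periodlemma}, so that for $i \geq n$ the groups $\pi_i\TCn(\sT/\bS[z];\Z_p)$ are finitely generated Breuil--Kisin modules and the $u$-multiplication maps are isomorphisms; in particular $\pi_i\TCn(\sT/\bS[z];\Z_p) \simeq \pi_i\TCn(\sT/\bS[z];\Z_p)[\tfrac1u]$ and, via \eqref{preTCnKisin}, $\pi_i\TP(\sT/\bS[z];\Z_p) \simeq \pi_i\TCn(\sT/\bS[z];\Z_p)\otimes_{\mS,\varphi}\mS$ for such $i$.

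Next I would recall the computation $\pi_*\TP(\sO_K/\bS[z];\Z_p) \simeq \mS[\sigma^{\pm}]$ and analyze the base change of $\TP(\sT/\bS[z];\Z_p)$ along the maps appearing in \eqref{SAinfdiag}. The key point, following \cite[\S11]{BMS2}, is that the composite $\ol{\phi}\colon \mS \to \Ainf$, $z \mapsto [\pi^\flat]^p$, factors as $\varphi$ followed by $\phi\colon z \mapsto [\pi^\flat]$, and that $\phi$ is exactly the map realizing the base change $\THH(\sO_K/\bS[z];\Z_p)\otimes_{\bS[z]}\bS[z^{1/p^\infty}] \simeq \THH(\sO_{K_\infty};\Z_p)$ upon further base change to $\sO_\Cu$. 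Concretely I would argue:
\begin{itemize}
\item[(a)] By Proposition~\ref{THHOCbSperf} and Proposition~\ref{TPTCkunneth}, all the relevant $\THH$'s of $\sT$ are perfect modules over the corresponding $\THH$ of the base, so taking $(-)^{tS^1}$ commutes with the base-change functors and with the identification of Lemma~\ref{BMSTHHlemma}; hence $\TP(\sT_{\sO_\Cu};\Z_p) \simeq \TP(\sT_{\sO_\Cu}/\bS[z^{1/p^\infty}];\Z_p) \simeq \TP(\sT/\bS[z];\Z_p)\otimes_{\TP(\sO_K/\bS[z];\Z_p)} \TP(\sO_\Cu/\bS[z^{1/p^\infty}];\Z_p)$.
\item[(b)] The ring map $\pi_*\TP(\sO_K/\bS[z];\Z_p) \simeq \mS[\sigma^{\pm}] \to \pi_*\TP(\sO_\Cu/\bS[z^{1/p^\infty}];\Z_p)$ is, on $\pi_0$, precisely $\phi\colon \mS \to \Ainf$ up to the twist absorbed by $\sigma$, and it is flat (indeed $\Ainf$ is $p$-torsion-free with the relevant $u,v$ structure), so the base change is computed on homotopy groups: $\pi_i\TP(\sT_{\sO_\Cu};\Z_p) \simeq \pi_i\TP(\sT/\bS[z];\Z_p)\otimes_{\mS,\phi}\Ainf$ for $i \geq n$ (using even-concentration of the base to avoid $\Tor$ terms).
\end{itemize}
Then combining (b) with the isomorphism $\pi_i\TP(\sT/\bS[z];\Z_p)\simeq \pi_i\TCn(\sT/\bS[z];\Z_p)\otimes_{\mS,\varphi}\mS$ and the commutativity of \eqref{SAinfdiag} gives
\[
\pi_i\TP(\sT_{\sO_\Cu};\Z_p) \simeq \pi_i\TCn(\sT/\bS[z];\Z_p)\otimes_{\mS,\varphi}\mS\otimes_{\mS,\phi}\Ainf \simeq \pi_i\TCn(\sT/\bS[z];\Z_p)\otimes_{\mS,\ol{\phi}}\Ainf,
\]
which is the desired isomorphism.

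Finally I would address $G_{K_\infty}$-equivariance: the action on the right comes from the Galois action on $\sO_\Cu$ (equivalently on $\THH(\sO_\Cu;\Z_p)$), and Lemma~\ref{BMSTHHlemma} already records that the two identifications there are $G_{K_\infty}$-equivariant; on the left the action is the one induced on $1\otimes g$ through $\phi$ (the copy of $\Ainf$), and one checks the comparison maps from Theorem~\ref{BKtheorem} and Proposition~\ref{TPTCkunneth} are built from $S^1$-equivariant, $\bS[z^{1/p^\infty}]$-linear maps that are manifestly $G_{K_\infty}$-equivariant since $\pi^{1/p^n}\in K_\infty$. I expect the main obstacle to be bookkeeping rather than conceptual: precisely matching the Frobenius twist in \eqref{SAinfdiag} with the $\varphi$ appearing in \eqref{preTCnKisin}, and justifying that all base changes along $\varphi$ and $\phi$ are flat enough (on even-degree-concentrated, regular noetherian homotopy rings) that the spectral-level tensor products are computed degreewise on $\pi_i$ in the stable range $i \geq n$. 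The perfectness statements of Propositions~\ref{TPTCkunneth}, \ref{THHOCperfect}, \ref{THHOCbSperf} are exactly what make this permissible.
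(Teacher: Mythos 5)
Your proposal is correct and follows essentially the same route as the paper's proof. The paper also factors the comparison map as the composite of the cyclotomic Frobenius $\varphi^{hS^1}_\sT\colon\TCn(\sT/\bS[z];\Z_p)\to\TP(\sT/\bS[z];\Z_p)$ with the base-change map $\TP(\sT/\bS[z];\Z_p)\to\TP(\sT_{\sO_\Cu}/\bS[z^{1/p^\infty}];\Z_p)$ (induced by the commutative square \eqref{diagram}) and then inverts $u$; it establishes the K\"unneth-type equivalence by checking both sides are symmetric monoidal functors on $\Catsat(\sO_K)$ via the perfectness propositions and invoking Proposition~\ref{compmonoid}, and reads off the result on homotopy using the $\ol\phi$-linear flat map $\mS[u^\pm]\to\Ainf[\sigma^\pm]$, $u\mapsto\sigma$ (flatness via \cite[Lemma 4.30]{BMS1}), together with Lemma~\ref{periodlemma}(2) in the stable range. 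The only cosmetic difference is that the paper handles the composite in a single step (so the twist $\ol\phi=\phi\circ\varphi$ never needs to be split), whereas you first reuse \eqref{TCnKisin} to identify $\pi_i\TP(\sT/\bS[z];\Z_p)\simeq\pi_i\TCn(\sT/\bS[z];\Z_p)\otimes_{\mS,\varphi}\mS$ and then base-change along $\phi$; for that second step you should explicitly record that the induced map on $\pi_0\TP$ is $\phi\colon\mS\to\Ainf$, $z\mapsto[\pi^\flat]$, and that $\phi$ (not just $\ol\phi$) is flat, both of which are exactly what \cite[Lemma 4.30]{BMS1} and \cite[\S11]{BMS2} supply. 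With those two lines inserted, your argument is complete and equivalent to the paper's.
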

\begin{proof}
For an $\sO_K$-linear idempotent-complete, small smooth proper stable $\infty$-category $\sT$, consider the following morphism
\[
\TCn(\sT/\bS[z];\Z_p) \overset{\varphi^{hS^1}_{\sT}}{\to} \TP(\sT/\bS[z];\Z_p) \to \TP(\sT_{\sO_\Cu}/\bS[z^{1/p^{\infty}}];\Z_p)
\]
where the second map is given by the diagram~\eqref{diagram}. The map sends $u$ to $\sigma$ and $u$ is an invertible element in $\TP(\sT_{\sO_\Cu}/\bS[z^{1/p^{\infty}}];\Z_p)$, we have a morphism
\begin{equation}
  \underline{\varphi}^{hS^1}_{\sT}:\TCn(\sT/\bS[z];\Z_p)[\frac{1}{u}] \to \TP(\sT_{\sO_\Cu}/\bS[z^{1/p^{\infty}}];\Z_p).
\end{equation}
Let us prove that the map $\underline{\varphi}^{hS^1}_{\sT}$ yields an equivalence of $\TP(\sT_{\sO_\Cu}/\bS[z^{1/p^{\infty}}];\Z_p)$-module:
\begin{equation}\label{AKAMO}
\TCn(\sT/\bS[z];\Z_p)[\frac{1}{u}] \otimes_{\TCn(\sO_K/\bS[z];\Z_p)[\frac{1}{u}],\underline{\varphi}^{hS^1}_{\sO_K}}\TP(\sO_{\Cu}/\bS[z^{1/p^{\infty}}];\Z_p)\simeq\TP(\sT_{\sO_\Cu}/\bS[z^{1/p^{\infty}}];\Z_p)
\end{equation}
which is compatible with $G_{K_\infty}$-action. By Proposition~\ref{TPTCkunneth}, the left side of the map \eqref{AKAMO} yields a symmetric monoidal functor from $\Catsat(\sO_K)$ to $\Mod_{\TP(\sO_{\Cu}/\bS[z^{1/p^{\infty}}];\Z_p)}(\Sp)$. By Proposition~\ref{THHOCperfect}, the right side of the map \eqref{AKAMO} also yields a symmetric monoidal functor from $\Catsat(\sO_K)$ to $\Mod_{\TP(\sO_{\Cu}/\bS[z^{1/p^{\infty}}];\Z_p)}(\Sp)$. By \cite[section 11]{BMS2}, on homotopy groups, the morphism
\[
\underline{\varphi}^{hS^1}_{\sO_K}:\pi_*\TCn(\sO_K/\bS[z];\Z_p)[\frac{1}{u}] \to \pi_*\TP(\sO_{\Cu}/\bS[z^{1/p^\infty}];\Z_p)
\]
is given by $\mS[u^{\pm}] \to \Ainf[\sigma^{\pm}]$ which is $\ol{\phi}$-linear and sends $u$ to $\sigma$, thus it is flat by \cite[Lemma 4.30]{BMS1}. We now have an $G_{K_\infty}$-equivariant isomorphism
\[
\pi_i\TCn(\sT/\bS[z];\Z_p)[\frac{1}{u}] \otimes_{\mS,\ol{\phi}} \Ainf\simeq \pi_i\TP(\sT_{\sO_\Cu}/\bS[z^{1/p^\infty}];\Z_p).
\]
for any $i$. By Lemma \ref{periodlemma} (2), we obtain the claim.
\end{proof}

\subsection{Comparison between $\TCn(\sT/\bS[z];\Z_p)$ and $\TP(\sT_{k};\Z_p)$} In this section, for an $\sO_K$-linear idempotent-complete, small smooth proper stable $\infty$-category $\sT$, we will compare between $\TCn(\sT/\bS[z];\Z_p)$ and $\TP(\sT_{k};\Z_p)$. A Cartesian diagram of $\mathbb{E}_\infty$-ring:
\[
\xymatrix{
k & \sO_K \ar[l] \\
\bS \ar[u] & \bS[z] \ar[u] \ar[l]
}
\]
For an $\sO_K$-linear stable $\infty$-category $\sT$, the diagram yields a morphism 
\[
\TCn(\sT/\bS[z];\Z_p) \to \TCn(\sT_k;\Z_p).
\]
and the composite
\begin{equation}\label{TCnTOkcomp}
\TCn(\sT/\bS[z];\Z_p) \to \TCn(\sT_k;\Z_p) \overset{\varphi^{hS^1}_k}{\to} \TP(\sT_k;\Z_p).
\end{equation}
\begin{thm}\label{TCnTPcomp3}
Let $\sT$ be an $\sO_K$-linear idempotent-complete, small smooth proper stable infinity category. Then there is a natural number $n$ satisfying that the morphism \eqref{TCnTOkcomp} an isomorphism
\[
\pi_i \TCn(\sT/\bS[z];\Z_p)[\frac{1}{p}]\otimes_{\mS [\frac{1}{p}],\tilde{\phi}} K_0 \to \pi_i \TP(\sT_k;\Z_p)[\frac{1}{p}]
\]
for any $i \geq n$, where $\tilde{\phi}$ is the map which sends $z$ to $0$ and Frobenius on $W$.
\end{thm}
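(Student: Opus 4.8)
The plan is to run the argument of Theorems~\ref{BKtheorem} and~\ref{TPTCcomp}: produce a symmetric monoidal natural transformation of functors on $\Catsat(\sO_K)$, promote it to an equivalence of spectra by Proposition~\ref{compmonoid}, and only then pass to homotopy groups. The new wrinkle, which is why $p$ must be inverted, is that the relevant scalar extension $\tilde\phi\colon\mS\to W$ is not flat (unlike the maps $\varphi$ and $\ol\phi$ used there), so a $\operatorname{Tor}_1$-term appears which has to be killed.

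First, by the description of $\varphi^{hS^1}_k$ in \cite[section~11]{BMS2} and the identification $\pi_*\TP(k;\Z_p)=W[\sigma^{\pm}]$, the composite~\eqref{TCnTOkcomp} carries the degree-$2$ class $u$ to (the image of) the invertible element $\sigma$, hence factors through a map
\[
\underline{\psi}_{\sT}\colon\ \TCn(\sT/\bS[z];\Z_p)[\tfrac{1}{u}]\ \longrightarrow\ \TP(\sT_k;\Z_p).
\]
Both $\sT\mapsto\TCn(\sT/\bS[z];\Z_p)[\tfrac{1}{u}]$ and $\sT\mapsto\TP(\sT_k;\Z_p)$, taken with values in $\Mod_{\TP(k;\Z_p)}(\Sp)$ after the evident base change, are symmetric monoidal on $\Catsat(\sO_K)$: the first by Proposition~\ref{TPTCkunneth} (localizing at $u$ is a symmetric monoidal operation), and the second because $\THH(\sT_k;\Z_p)$ is a perfect $\THH(k;\Z_p)$-module — this follows as in Proposition~\ref{propdualisperf}: $\sT_k$ is dualizable over $k$, so $\THH(\sT_k;\Z_p)$ is dualizable over $\THH(k;\Z_p)$, and $\pi_*\THH(k;\Z_p)\cong k[u]$ ($|u|=2$) is regular noetherian, of finite Krull dimension, concentrated in even degrees, so \cite[Theorem~2.15]{KunnethTP} applies; then $\TP(\sT_k;\Z_p)=\THH(\sT_k;\Z_p)^{tS^1}$ is perfect over $\TP(k;\Z_p)$ via the lax symmetric monoidal exact functor $(-)^{tS^1}$. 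Since $\underline{\psi}_{\sT}$ is a symmetric monoidal natural transformation, Proposition~\ref{compmonoid} gives an equivalence of spectra
\[
\TCn(\sT/\bS[z];\Z_p)[\tfrac{1}{u}]\ \otimes_{\TCn(\sO_K/\bS[z];\Z_p)[\tfrac{1}{u}],\,\underline{\psi}_{\sO_K}}\ \TP(k;\Z_p)\ \xrightarrow{\ \sim\ }\ \TP(\sT_k;\Z_p).
\]

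Next I compute homotopy. By \cite[section~11]{BMS2}, $\pi_*\TCn(\sO_K/\bS[z];\Z_p)[\tfrac{1}{u}]=\mS[u^{\pm}]$, and $\underline{\psi}_{\sO_K}$ realizes the $\tilde\phi$-semilinear homomorphism $\mS[u^{\pm}]\to W[\sigma^{\pm}]$, $u\mapsto\sigma$. Since $W=\mS/(z)$ and $z$ is a non-zero-divisor in $\mS$, this exhibits $\TP(k;\Z_p)$ as $\mathrm{cofib}\bigl(z\colon\TCn(\sO_K/\bS[z];\Z_p)[\tfrac{1}{u}]\to\TCn(\sO_K/\bS[z];\Z_p)[\tfrac{1}{u}]\bigr)$, so the equivalence above identifies $\TP(\sT_k;\Z_p)$ with $\mathrm{cofib}\bigl(z\colon\TCn(\sT/\bS[z];\Z_p)[\tfrac{1}{u}]\to\TCn(\sT/\bS[z];\Z_p)[\tfrac{1}{u}]\bigr)$. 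Taking homotopy, and using Lemma~\ref{periodlemma}(2) to replace $\pi_\bullet\TCn(\sT/\bS[z];\Z_p)[\tfrac{1}{u}]$ by $\pi_\bullet\TCn(\sT/\bS[z];\Z_p)$ once $\bullet$ is large, one obtains, for all large $i$, a short exact sequence
\[
0\to\pi_i\TCn(\sT/\bS[z];\Z_p)\otimes_{\mS,\tilde\phi}W\to\pi_i\TP(\sT_k;\Z_p)\to\operatorname{Tor}_1^{\mS}\!\bigl(\pi_{i-1}\TCn(\sT/\bS[z];\Z_p),W\bigr)\to 0,
\]
whose third term is the $z$-torsion of $\pi_{i-1}\TCn(\sT/\bS[z];\Z_p)$.

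The crux is to kill this torsion after inverting $p$; this is the step with no analogue in Theorems~\ref{BKtheorem} and~\ref{TPTCcomp}, where $\varphi$ and $\ol\phi$ were flat. By Theorem~\ref{BKtheorem}, $\pi_{i-1}\TCn(\sT/\bS[z];\Z_p)$ is a Breuil–Kisin module for $i-1$ large. Such a module $M$ has no $(z)$-primary torsion: if it did, then, since $\varphi$ is totally ramified of degree $p$ at the height-one prime $(z)$ (as $\varphi(z)=z^{p}$) and $(z)\neq(E)$, the source and target of the isomorphism $\varphi^{*}M[\tfrac{1}{E}]\simeq M[\tfrac{1}{E}]$ would have different lengths at $(z)$, a contradiction. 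Its remaining $z$-torsion is supported at the maximal ideal $(p,z)$, hence annihilated by a power of $p$. Therefore, after inverting $p$ in the short exact sequence (so $W$ becomes $K_0$ and $\otimes_{\mS,\tilde\phi}W$ becomes $\otimes_{\mS[\tfrac{1}{p}],\tilde\phi}K_0$), the first map is an isomorphism, which is the claim; one then takes $n$ large enough that all the "for large $i$" invocations are valid. The torsion analysis of Breuil–Kisin modules is the only substantive point; everything else is a transcription of the earlier proofs.
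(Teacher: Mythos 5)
Your proposal is correct, and it reaches the conclusion by a route that differs from the paper's in two substantive ways. First, you work directly with the composite $\TCn(\sT/\bS[z];\Z_p)[\tfrac1u]\to\TP(\sT_k;\Z_p)$, establishing a single K\"unneth-type equivalence and reading off $\TP(\sT_k;\Z_p)$ as the cofiber of multiplication by $z$; the paper instead uses the commutative square to reduce the statement to the map $\TP(\sT/\bS[z];\Z_p)\to\TP(\sT_k;\Z_p)$, establishes the K\"unneth equivalence at the $\TP$ level after inverting $p$ (their equation \eqref{ACOMU}), and only then translates back to $\TCn$ using the $\varphi$-twist isomorphism \eqref{TCnKisin} and the relation $\tau\circ\varphi=\tilde\phi$. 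Second — and this is the more interesting divergence — where both of you must account for the non-flatness of $\tilde\phi:\mS\to W$, the paper invokes \cite[Proposition~4.3]{BMS1} to assert that a Breuil--Kisin module becomes finite free over $\mS[\tfrac1p]$ after inverting $p$, hence the graded $\pi_*$ of $\TP(\sT/\bS[z];\Z_p)[\tfrac1p]$ is free and the base change along $\tau$ introduces no $\operatorname{Tor}$; you instead exhibit the $\operatorname{Tor}_1$-term explicitly via the $z$-cofiber sequence and kill it by the elementary length argument at the height-one prime $(z)$: since $\varphi(z)=z^p$ and $(z)\neq(E)$, a non-zero $(z)$-primary torsion piece would have its length multiplied by $p$ under $\varphi^*$, contradicting $\varphi^*M[\tfrac1E]\simeq M[\tfrac1E]$, so the $z$-torsion is supported only at the closed point $(p,z)$ and dies after inverting $p$. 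Your argument is self-contained where the paper outsources the key finiteness to BMS1, and it isolates precisely the minimal torsion-freeness fact needed for the $\operatorname{Tor}$-vanishing, at the cost of requiring the explicit cofiber/short-exact-sequence bookkeeping. One small point worth flagging: the identification of $\TP(k;\Z_p)$ with $\operatorname{cofib}(z)$ requires observing that the induced map on $\pi_*$ is the Frobenius on $W$ composed with $u\mapsto\sigma$, which is an isomorphism only because $k$ (hence $W$) is perfect — this is implicit in your write-up and should be made explicit.
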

\begin{proof}
  The morphism \eqref{TCnTOkcomp} induces a commutative diagram
  \[
  \xymatrix{
  \TCn(\sT/\bS[z];\Z_p) \ar[d]_{\varphi^{hS^1}_{\sT}} \ar[r]& \TCn(\sT_k;\Z_p) \ar[d]^{\varphi^{hS^1}_k} \\
    \TP(\sT/\bS[z];\Z_p)   \ar[r]&\TP(\sT_k;\Z_p) 
  }
  \]
  By Lemma~\ref{periodlemma} (3), it is enough to prove that the morphism $\TP(\sT/\bS[z];\Z_p) \to\TP(\sT_k;\Z_p) $ induces an isomorphism
  \[
  \pi_i \TP(\sT/\bS[z];\Z_p)[\frac{1}{p}]\otimes_{\mS [\frac{1}{p}],\tau} K_0 \to \pi_i \TP(\sT_k;\Z_p)[\frac{1}{p}]
  \]
  for any $i$, where $\tau$ is a $W$-linear and sends $z$ to $0$. By Theorem~\ref{BKtheorem} and \cite[Proposition 4.3]{BMS1}, there is $n$ such that for any $i\geq n$, the homotopy group $\pi_i\TCn(\sT/\bS[z];\Z_p)$ is a Breuil-Kisin module, and $\pi_i\TCn(\sT/\bS[z];\Z_p)[\frac{1}{p}]$ is a finite free $\mS[\frac{1}{p}]$-module. By an isomorphism~\ref{TCnKisin} and the fact that $\TP(\sT/\bS[z];\Z_p)$ is $2$-periodic, we obtain that $\pi_i\TP(\sT/\bS[z];\Z_p)[\frac{1}{p}]\overset{\eqref{TCnKisin}}{\simeq} \pi_i\TCn(\sT/\bS[z];\Z_p)[\frac{1}{p}]\otimes_{\mS[\frac{1}{p}],\phi}\mS[\frac{1}{p}]$ is a finite free $\mS[\frac{1}{p}]$-module for any $i$. Thus, on homotopy groups, one obtains an isomorphism 
  \begin{eqnarray}\label{flatmap}
 \pi_*\TP(\sT/\bS[z];\Z_p)[\frac{1}{p}]\simeq \pi_0\TP(\sT/\bS[z];\Z_p)[\frac{1}{p}]\otimes_{\mS[\frac{1}{p}]} \pi_*\TP(\sO_K/\bS[z];\Z_p)[\frac{1}{p}] \\
 \oplus \pi_1\TP(\sT/\bS[z];\Z_p)[\frac{1}{p}]\otimes_{\mS[\frac{1}{p}]}\pi_*\TP(\sO_K/\bS[z];\Z_p)[\frac{1}{p}],\nonumber
  \end{eqnarray}
  thus $\pi_*\TP(\sT/\bS[z];\Z_p)[\frac{1}{p}]$ is a flat graded $\pi_*\TP(\sO_K/\bS[z];\Z_p)[\frac{1}{p}]$-module. Let us prove the morphism
  \begin{equation}\label{ACOMU}
  \TP(\sT/\bS[z];\Z_p)[\frac{1}{p}] \otimes_{\TP(\sO_K/\bS[z];\Z_p)[\frac{1}{p}]} \TP(k;\Z_p)[\frac{1}{p}] \to \TP(\sT_k;\Z_p)[\frac{1}{p}]
  \end{equation}
  is an equivalence. Both sides of the map \eqref{ACOMU} yield symmetric monoidal functors from $\Catsat(\sO_K)$ to $\Mod_{\TP(k;\Z_p)[\frac{1}{E}]}(\Sp)$, thus by Proposition~\ref{compmonoid}, we know the map \eqref{ACOMU} is an equivalence. By the isomorphism \eqref{flatmap} and the fact that $\TP(\sT_{\sO_\Cu};\Z_p)$ is $2$-periodic, on homotopy groups, we have an isomorphism
\begin{equation}\label{TPTPcomp}
\pi_i \TP(\sT/\bS[z];\Z_p)[\frac{1}{p}]\otimes_{\mS [\frac{1}{p}],\tau} K_0 \to \pi_i \TP(\sT_k;\Z_p)[\frac{1}{p}]
\end{equation}
for any $i$. By isomorphism~\eqref{TCnKisin} and an equality $\tau\circ \phi=\tilde{\phi}$, there is an isomorphism
\begin{eqnarray*}
\pi_i \TCn(\sT/\bS[z];\Z_p)[\frac{1}{p}]\otimes_{\mS [\frac{1}{p}],\tilde{\phi}} K_0 & \simeq & \pi_i \TCn(\sT/\bS[z];\Z_p)[\frac{1}{p}]\otimes_{\mS [\frac{1}{p}],\phi} \mS \otimes_{\mS,\tau} K_0 \\
&\simeq & \pi_i \TP(\sT/\bS[z];\Z_p)[\frac{1}{p}]\otimes_{\mS [\frac{1}{p}],\tau} K_0 \\
&\overset{\eqref{TPTPcomp}}{\simeq} &\pi_i \TP(\sT_k;\Z_p)[\frac{1}{p}].
\end{eqnarray*}
for any $i\geq n$.
\end{proof}

\subsection{Comparison theorem between $\TCn(\sT/\bS[z];\Z_p)$ and $\LK K(\sT_{\Cu})$} Let $\sT$ be an $\sO_K$-linear idempotent-complete, small smooth proper stable infinity category. Using \cite[Thm 2.16]{remarkonK1}, one obtain an equivalence $\LK K(\sT_{\sO_{\Cu}}) \simeq \LK K(\sT_\Cu)$ of ring spectra. We recall some results about topological Hochschild homology theory from \cite{Lars}, \cite{remarkonK1} and \cite{BMS2}. On homotopy groups, there is an isomorphism
\begin{equation}
  \pi_*\TP(\sO_\Cu;\Z_p) \simeq \Ainf[\sigma,\sigma^{-1}]
\end{equation}
where $\sigma$ is a generator $\sigma \in \TP_2(\sO_\Cu;\Z_p)$. Let $\beta\in K_2(\Cu)$ be the Bott element. For any $\sO_\Cu$-linear stable $\infty$-category $\sD$, we have an identification $\LK K(\sD) \simeq \LK K(\sD_\Cu)$ (see \cite[Theorem~2.16]{remarkonK1}). The cyclotomic trace map $\text{tr}:K(\sO_\Cu) \to \TP(\sO_\Cu;\Z_p)$ sends $\beta$ to a $\Z_p^*$-multiple of $([\epsilon]-1)\sigma$ (see \cite{Lars} and \cite[Theorem 1.3.6]{HN}), the cyclotomic trace map induce a morphism
\[
\LK\text{tr}:\LK K(\sD_\Cu)\simeq \LK K(\sD) \to \LK \TP(\sD)\simeq \TP(\sD;\Z_p)[\frac{1}{[\epsilon]-1}]^{\wedge}_p
\]
for any $\sO_\Cu$-linear stable $\infty$-category $\sD$.
\begin{thm}\label{KTCncomp}
Let $\sT$ be an $\sO_K$-linear idempotent-complete, small smooth proper stable infinity category. We assume Conjecture \ref{kunnethK1}, then the trace map induces an equivalence
\begin{equation}\label{KTCnequi}
\LK K(\sT_\Cu)\otimes_{\LK K(\Cu)} \LK \TP(\sO_\Cu) \to \LK \TP(\sT_{\sO_\Cu}).
\end{equation}
In particular, there is a $G_K$-equivariant isomorphism
\[
\pi_i \LK K(\sT_\Cu) \otimes_{\Z_p} \Ainf[\frac{1}{[\epsilon]-1}]^{\wedge}_p \simeq \pi_i \TP(\sT_{\sO_\Cu};\Z_p)[\frac{1}{[\epsilon]-1}]^{\wedge}_p
\]
for any $i$.
\end{thm}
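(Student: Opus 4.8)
The plan is to run the Antieau--Mathew--Nikolaus rigidity argument (Proposition~\ref{compmonoid}) on the localized cyclotomic trace, in the same spirit as the proofs of Theorems~\ref{BKtheorem}, \ref{TPTCcomp} and \ref{TCnTPcomp3}. First I would set up the two competing functors on $\Catsat(\sO_K)$. Base change along $\sO_K \to \sO_\Cu$ and $\sO_K \to \Cu$ gives symmetric monoidal functors $\sT \mapsto \sT_{\sO_\Cu}$ and $\sT \mapsto \sT_\Cu$, and by \cite[Theorem~2.16]{remarkonK1} there is a natural equivalence $\LK K(\sT_{\sO_\Cu}) \simeq \LK K(\sT_\Cu)$. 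Granting Conjecture~\ref{kunnethK1}, the composite $\sT \mapsto \LK K(\sT_\Cu)$ is symmetric monoidal with values in $\Mod_{\LK K(\Cu)}(\Sp)$; post-composing with base change along the localized trace $\LK K(\Cu) \to \LK\TP(\sO_\Cu)$ yields a symmetric monoidal functor $F_1\colon \sT \mapsto \LK K(\sT_\Cu)\otimes_{\LK K(\Cu)}\LK\TP(\sO_\Cu)$ into $\Mod_{\LK\TP(\sO_\Cu)}(\Sp)$. For the other functor, Proposition~\ref{THHOCbSperf} shows $\THH(\sT_{\sO_\Cu};\Z_p)$ is perfect over $\THH(\sO_\Cu;\Z_p)$; applying the lax symmetric monoidal exact functor $(-)^{tS^1}$ and then inverting $[\epsilon]-1$ and $p$-completing produces a symmetric monoidal functor $F_2\colon \sT\mapsto\LK\TP(\sT_{\sO_\Cu})$, also into $\Mod_{\LK\TP(\sO_\Cu)}(\Sp)$.

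Next I would note that the cyclotomic trace is lax symmetric monoidal, so after $K(1)$-localization and the identification $\LK K(\sT_{\sO_\Cu})\simeq\LK K(\sT_\Cu)$ it induces a lax symmetric monoidal natural transformation $F_1\Rightarrow F_2$, which unwinds to exactly the comparison map~\eqref{KTCnequi}. Since every object of $\Catsat(\sO_K)$ is smooth and proper, hence dualizable (Cf.~\cite[Ch.~11]{Lurie17}), $F_1$ and $F_2$ are genuinely symmetric monoidal and the transformation between them is symmetric monoidal as well; Proposition~\ref{compmonoid} then forces~\eqref{KTCnequi} to be an equivalence.

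For the displayed isomorphism I would pass to homotopy. By \cite{Lars} and \cite[section~11]{BMS2} one has $\pi_*\LK\TP(\sO_\Cu)\simeq\Ainf[\frac{1}{[\epsilon]-1}]^\wedge_p[\sigma^{\pm 1}]$ with $\deg\sigma=2$ and $\pi_*\LK K(\Cu)\simeq\Z_p[\beta^{\pm 1}]$ with $\deg\beta=2$, and the trace carries $\beta$ to a $\Z_p^*$-multiple of $([\epsilon]-1)\sigma$, a unit of $\pi_*\LK\TP(\sO_\Cu)$. Hence $\pi_*\LK\TP(\sO_\Cu)$ is flat (in fact faithfully flat) over $\pi_*\LK K(\Cu)$, the Tor spectral sequence for~\eqref{KTCnequi} degenerates, and $\pi_i\LK\TP(\sT_{\sO_\Cu})\simeq\pi_i\LK K(\sT_\Cu)\otimes_{\pi_*\LK K(\Cu)}\pi_*\LK\TP(\sO_\Cu)$. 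Using the $2$-periodicity of $\LK K(\sT_\Cu)$ to match degrees, together with $\LK\TP(\sT_{\sO_\Cu})\simeq\TP(\sT_{\sO_\Cu};\Z_p)[\frac{1}{[\epsilon]-1}]^\wedge_p$, this collapses to $\pi_i\LK K(\sT_\Cu)\otimes_{\Z_p}\Ainf[\frac{1}{[\epsilon]-1}]^\wedge_p\simeq\pi_i\TP(\sT_{\sO_\Cu};\Z_p)[\frac{1}{[\epsilon]-1}]^\wedge_p$. Since $G_K$ acts on $\sO_\Cu$ and $\Cu$ over $\sO_K$ and every functor above as well as the cyclotomic trace is $G_K$-equivariant, the resulting isomorphism is $G_K$-equivariant.

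The hard part will be the bookkeeping in the first step: verifying that Conjecture~\ref{kunnethK1} together with $\sO_\Cu$-rigidity really makes $F_1$ symmetric monoidal with unit $\LK K(\Cu)$, and that the lax symmetric monoidal structure on the cyclotomic trace is compatible with the base changes along $\sO_K\to\sO_\Cu$ and along $\LK K(\Cu)\to\LK\TP(\sO_\Cu)$, so that~\eqref{KTCnequi} is genuinely the transformation produced by the trace and is monoidal. Once these compatibilities are in place, Proposition~\ref{compmonoid} does the work and the homotopy-group computation is routine.
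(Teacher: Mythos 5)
Your proposal is correct and follows essentially the same route as the paper: both sides of~\eqref{KTCnequi} are recognized as symmetric monoidal functors on $\Catsat(\sO_K)$ (via Conjecture~\ref{kunnethK1} for the $K$-theory side and Proposition~\ref{THHOCbSperf} for the $\TP$ side), the rigidity result Proposition~\ref{compmonoid} upgrades the natural transformation induced by the cyclotomic trace to an equivalence, and flatness of $\LK\tr\colon\Z_p[\beta^{\pm}]\to\Ainf[\frac{1}{[\epsilon]-1}]^{\wedge}_p[\sigma^{\pm}]$ (sending $\beta$ to a unit multiple of $([\epsilon]-1)\sigma$) gives the isomorphism on homotopy groups. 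Your write-up is more explicit than the paper's about the definitions of the two functors and the invocation of Proposition~\ref{compmonoid}, which the paper leaves implicit, but the argument is the same.
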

\begin{proof}
Both sides of the map \eqref{KTCnequi} yield symmetric monoidal functors from $\Catsat(\sO_K)$ to $\Mod_{\LK\TP(\sO_\Cu;\Z_p)}(\Sp)$, thus the map \eqref{KTCnequi} is an equivalence by the assumption of Conjecture~\ref{kunnethK1} and Proposition~\ref{THHOCbSperf}. On homotopy groups, the morphism $\LK \tr :\pi_*\LK K(\Cu)\to \pi_*\LK\TP(\sO_\Cu;\Z_p)$ is given by $\Z_p[\beta^{\pm}] \to \Ainf[\frac{1}{[\epsilon]-1}]^{\wedge}_p[\sigma^{\pm}]$ which sends $\beta$ to $([\epsilon]-1)\sigma$, and is flat. Thus we obtain the claim.
\end{proof}

The following corollary is a non-commutative analogue of the finiteness of \'etale cohomology groups for proper varieties over a closed field.
\begin{cor}
Let $\sT$ be an $\sO_K$-linear idempotent-complete, small smooth proper stable infinity category. We assume Conjecture \ref{kunnethK1}, then all $\pi_*\LK K(\sT_\Cu)$ is a finitely generated $\Z_p$-module.
\end{cor}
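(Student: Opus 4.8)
The plan is to use Conjecture~\ref{kunnethK1} to recognize $\LK K(\sT_\Cu)$ as a perfect $\LK K(\Cu)$-module, and then to read off the finiteness from the computation $\pi_*\LK K(\Cu)\cong\Z_p[\beta^{\pm1}]$.

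First I would recall, as already observed in the proof of Theorem~\ref{KTCncomp}, that under Conjecture~\ref{kunnethK1} the composite
\[
\Catsat(\sO_K)\xrightarrow{\ \sT\mapsto\sT_\Cu\ }\Catsat(\Cu)\xrightarrow{\ \LK K(-)\ }\Mod_{\LK K(\Cu)}(\Sp)
\]
is symmetric monoidal: base change $\sT\mapsto\sT_\Cu=\sT\otimes_{\sO_K}\Cu$ preserves smoothness and properness and is symmetric monoidal, while the second functor is symmetric monoidal precisely by the assumed K\"unneth formula (this is the only place where the hypothesis is used; in general $\LK K(-)$ is merely lax symmetric monoidal). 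Since every object of $\Catsat(\sO_K)$ is dualizable (see \cite[Ch.~11]{Lurie17}), and a symmetric monoidal functor carries dualizable objects to dualizable objects, $\LK K(\sT_\Cu)$ is dualizable in $\Mod_{\LK K(\Cu)}(\Sp)$; as the dualizable objects there are exactly the perfect modules, $\LK K(\sT_\Cu)$ lies in the thick subcategory generated by the unit $\LK K(\Cu)$.

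Next I would reduce to an algebraic statement about perfect modules. By Thomason \cite{Thomason} one has $\LK K(\Cu)\simeq KU^{\wedge}_p$, so $\pi_*\LK K(\Cu)\cong\Z_p[\beta^{\pm1}]$ with $\beta$ in degree $2$; in particular every homotopy group of $\LK K(\Cu)$ is finitely generated over $\Z_p$. Let $\mathcal{E}\subseteq\Mod_{\LK K(\Cu)}(\Sp)$ be the full subcategory of modules all of whose homotopy groups are finitely generated $\Z_p$-modules. It contains $\LK K(\Cu)$; it is closed under shifts; it is closed under retracts, since a direct summand of a finitely generated module over the Noetherian ring $\Z_p$ is finitely generated; and it is closed under cofibres, since for a cofibre sequence $A\to B\to C$ with $A,B\in\mathcal{E}$ the long exact homotopy sequence exhibits each $\pi_iC$ as an extension of a submodule of a finitely generated $\Z_p$-module by a quotient of one, hence finitely generated (again because $\Z_p$ is Noetherian). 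Thus $\mathcal{E}$ is a thick subcategory containing $\LK K(\Cu)$, so it contains the thick subcategory generated by $\LK K(\Cu)$, and in particular $\LK K(\sT_\Cu)\in\mathcal{E}$. This is exactly the statement of the corollary.

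The one substantive step is the first: invoking Conjecture~\ref{kunnethK1} to upgrade the lax monoidal structure on $\LK K(-)$ to a genuine monoidal structure, which is what forces $\LK K(\sT_\Cu)$ to be dualizable; everything after that is formal. (Alternatively, one could obtain the underlying finiteness without the conjecture by combining Theorems~\ref{TPTCcomp} and \ref{BKtheorem}, which give for $i\gg0$ that $\pi_i\TP(\sT_{\sO_\Cu};\Z_p)\cong\pi_i\TCn(\sT/\bS[z];\Z_p)\otimes_{\mS,\ol{\phi}}\Ainf$ is a finitely generated $\Ainf$-module, and then descending finite generation along $\Z_p\to\Ainf[\frac{1}{\mu}]^{\wedge}_p$ via Theorem~\ref{KTCncomp}; this route, however, requires carefully tracking the $p$-completion after inverting $\mu$ over the non-Noetherian ring $\Ainf$, so the perfect-module argument is the cleaner one.)
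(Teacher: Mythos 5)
Your proof is correct, and it takes a genuinely different route from the one in the paper. You argue entirely within $K(1)$-local $K$-theory: Conjecture~\ref{kunnethK1} upgrades the lax monoidal $\LK K(-)$ on $\Catsat(\Cu)$ to a symmetric monoidal functor, dualizability of $\sT_\Cu$ then forces $\LK K(\sT_\Cu)$ to be dualizable in $\Mod_{\LK K(\Cu)}(\Sp)$, the regularity and Noetherianity of $\pi_*\LK K(\Cu)\cong\Z_p[\beta^{\pm1}]$ identifies dualizable with perfect (via \cite[Theorem~2.15]{KunnethTP}, the same theorem the paper uses in Proposition~\ref{propdualisperf} for $\THH$), and the thick-subcategory argument over the Noetherian ring $\Z_p$ finishes. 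The paper instead invokes Theorem~\ref{KTCncomp} together with Lemma~\ref{BMSTHHlemma} and Proposition~\ref{THHOCperfect} to transport the question through $\TP(\sT_{\sO_\Cu};\Z_p)$, whose homotopy groups are finitely generated over $\Ainf[\frac{1}{\mu}]^\wedge_p$ by perfectness, and then descends finite generation along the faithfully flat map $\Z_p\to\Ainf[\frac{1}{\mu}]^\wedge_p$. Your route is more self-contained and avoids the descent step and the $p$-completion-after-localization subtleties you rightly flag in your alternative sketch; the paper's route has the virtue of re-using machinery that is already on the table for the main comparison theorems. Both uses of the conjecture are really the same (the conjecture enters the paper's route through Theorem~\ref{KTCncomp}, whose proof requires it); you have just made the appeal visible at the top of the argument.
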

\begin{proof}
Note that $\Z_p \to \Ainf[\frac{1}{[\epsilon]-1}]$ is faithful flat. By Theorem~\ref{KTCncomp}, Lemma~\ref{BMSTHHlemma} and Proposition~\ref{THHOCperfect} we obtain the claim. 
\end{proof}

Without the assumption of Conjecture~\ref{kunnethK1}, we can prove the following.
\begin{thm}\label{KTPcompcommutative}
  Assume $K$ is a finite extension of $\Q_p$. Let $X$ be a smooth proper variety over $K$. Then there is a $G_K$-equivariant isomorphism
\[
\pi_i \LK K(X_\Cu) \otimes_{\Z_p} \Ainf[\frac{1}{[\epsilon]-1}]^{\wedge}_p \simeq \pi_i \TP(X_{\sO_\Cu};\Z_p)[\frac{1}{[\epsilon]-1}]^{\wedge}_p
\]
for any $i$.
\end{thm}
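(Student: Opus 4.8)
The plan is to run the proof of Theorem~\ref{KTCncomp} verbatim, but over the full symmetric monoidal subcategory $\Catcsat(\sO_K) \subset \Catsat(\sO_K)$ of perfect complexes on smooth proper $\sO_K$-schemes, with the conjectural K\"unneth input (Conjecture~\ref{kunnethK1}) replaced by the unconditional Proposition~\ref{commkunnethK1}. Since the statement already refers to $X_{\sO_\Cu}$, I would fix a smooth proper model $\mfX/\sO_K$ with $\mfX_\Cu = X_\Cu$ and $\mfX_{\sO_\Cu} = X_{\sO_\Cu}$, and set $\sT := \perf(\mfX) \in \Catcsat(\sO_K)$. By \cite[Corollary~9.4.3.4, Corollary~9.4.3.8]{Lurie17} every object of $\Catcsat(\sO_K)$ is dualizable (indeed self-dual), and $\perf(\mfX_1 \times_{\sO_K} \mfX_2) \simeq \perf(\mfX_1) \otimes_{\sO_K} \perf(\mfX_2)$ for smooth proper $\mfX_1,\mfX_2$ over $\sO_K$, so Proposition~\ref{compmonoid} is available on $\Catcsat(\sO_K)$.

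First I would upgrade the lax symmetric monoidal functor $\sT \mapsto \LK K(\sT_{\sO_\Cu})$ on $\Catcsat(\sO_K)$ to a symmetric monoidal one: using the natural equivalence $\LK K(\sT_{\sO_\Cu}) \simeq \LK K(\sT_\Cu)$ of \cite[Theorem~2.16]{remarkonK1} together with the monoidality of $\perf(-)$, its lax structure map on $\perf(\mfX_1), \perf(\mfX_2)$ becomes the natural map $\LK K(\mfX_{1,\Cu}) \otimes_{\LK K(\Cu)} \LK K(\mfX_{2,\Cu}) \to \LK K(\mfX_{1,\Cu} \times_{\Cu} \mfX_{2,\Cu})$, which is an equivalence by Proposition~\ref{commkunnethK1} (this is where $K/\Q_p$ finite enters). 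Base change along the $\mathbb{E}_\infty$-ring map $\LK\tr\colon \LK K(\Cu) \to \LK\TP(\sO_\Cu)$ then yields a symmetric monoidal functor $\sT \mapsto \LK K(\sT_{\sO_\Cu}) \otimes_{\LK K(\Cu)} \LK\TP(\sO_\Cu)$ into $\Mod_{\LK\TP(\sO_\Cu;\Z_p)}(\Sp)$. On the target side, Proposition~\ref{THHOCbSperf} gives that $\THH(\sT_{\sO_\Cu};\Z_p)$ is perfect, so $\sT \mapsto \LK\TP(\sT_{\sO_\Cu})$ is also symmetric monoidal into $\Mod_{\LK\TP(\sO_\Cu;\Z_p)}(\Sp)$.

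Next, exactly as in the proof of Theorem~\ref{KTCncomp}, the cyclotomic trace supplies a symmetric monoidal natural transformation from the first functor to the second, so Proposition~\ref{compmonoid} forces it to be an equivalence $\LK K(\sT_\Cu) \otimes_{\LK K(\Cu)} \LK\TP(\sO_\Cu) \iso \LK\TP(\sT_{\sO_\Cu})$. Then I would pass to homotopy groups: by \cite{Lars} and \cite[Theorem~1.3.6]{HN}, $\LK\tr$ carries $\beta$ to a $\Z_p^*$-multiple of $([\epsilon]-1)\sigma$, so on homotopy it is the flat map $\Z_p[\beta^{\pm}] \to \Ainf[\frac{1}{[\epsilon]-1}]^{\wedge}_p[\sigma^{\pm}]$, and flatness collapses the base change to an isomorphism $\pi_i\LK K(X_\Cu) \otimes_{\Z_p} \Ainf[\frac{1}{[\epsilon]-1}]^{\wedge}_p \simeq \pi_i\TP(X_{\sO_\Cu};\Z_p)[\frac{1}{[\epsilon]-1}]^{\wedge}_p$; $G_K$-equivariance is inherited from that of the trace and of $\pi_*\TP(\sO_\Cu;\Z_p) \simeq \Ainf[\sigma^{\pm}]$.

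The step I expect to be the main obstacle is the one just above: promoting Proposition~\ref{commkunnethK1} --- which is phrased for finite products of smooth proper varieties over $\sC$ via a field isomorphism $\sigma\colon \sC \cong \bC$ and Atiyah's finiteness --- to a genuine symmetric monoidal structure on the functor $\sT \mapsto \LK K(\sT_{\sO_\Cu})$ over $\Catcsat(\sO_K)$. This requires checking that the comparison $\LK K(\sT_{\sO_\Cu}) \simeq \LK K(\sT_\Cu)$, the identification $\perf(\mfX_1 \times \mfX_2) \simeq \perf(\mfX_1) \otimes \perf(\mfX_2)$, and the Atiyah--Thomason description $\LK K(\mfX_\Cu) \simeq K_{\text{top}}((\mfX_\Cu)^{an})^{\wedge}_p$ are all compatible as symmetric monoidal data. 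A secondary point is that invoking Proposition~\ref{THHOCbSperf} on the target side forces the existence of the smooth proper model $\mfX/\sO_K$, i.e.\ the good-reduction hypothesis that is implicit in the notation $X_{\sO_\Cu}$.
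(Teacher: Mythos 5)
Your proposal is essentially identical to the paper's own proof: both restrict to $\Catcsat(\sO_K)$, use Proposition~\ref{commkunnethK1} in place of Conjecture~\ref{kunnethK1} as the K\"unneth input, combine it with Proposition~\ref{THHOCbSperf} so that both sides of the trace map become symmetric monoidal functors into $\Mod_{\LK\TP(\sO_\Cu;\Z_p)}(\Sp)$, invoke Proposition~\ref{compmonoid}, and then pass to homotopy using flatness of $\Z_p[\beta^{\pm}] \to \Ainf[\frac{1}{[\epsilon]-1}]^{\wedge}_p[\sigma^{\pm}]$. The two caveats you flag --- that $X_{\sO_\Cu}$ implicitly requires a smooth proper $\sO_K$-model, and that promoting Proposition~\ref{commkunnethK1} (stated for varieties over $\sC$ via a field isomorphism with $\bC$) to a genuine symmetric monoidal structure on $\sT \mapsto \LK K(\sT_{\sO_\Cu})$ over $\Catcsat(\sO_K)$ requires compatibility checks the paper does not spell out --- are both real and are glossed over in the paper's one-paragraph proof as well.
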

\begin{proof}
  The natural map
  \[
  \LK K(X_\Cu)\otimes_{\LK K(\Cu)} \LK \TP(\sO_\Cu) \to \LK \TP(X_{\sO_\Cu})
  \]
  is an equivalence, since both sides of the map yield symmetric monoidal functors from $\Catcsat(\sO_K)$ to $\Mod_{\LK\TP(\sO_\Cu;\Z_p)}(\Sp)$ by the assumption of Proposition~\ref{commkunnethK1} and Proposition~\ref{THHOCbSperf}. On homotopy groups, the morphism $\LK \tr :\pi_*\LK K(\Cu)\to \pi_*\LK\TP(\sO_\Cu;\Z_p)$ is given by $\Z_p[\beta^{\pm}] \to \Ainf[\frac{1}{[\epsilon]-1}]^{\wedge}_p[\sigma^{\pm}]$ which sends $\beta$ to $([\epsilon]-1)\sigma$, and is flat. Thus we obtain the claim.
\end{proof}

\subsection{Non-commutative version of Bhatt-Morrow-Scholze's comparison theorem} In this section, we prove non-commutative version of Theorem~\ref{BKcoh}.
\begin{thm}\label{NCBMS}
  Let $\sT$ be an $\sO_K$-linear idempotent-complete, small smooth proper stable $\infty$-category. Then there is $n$ satisfying the following holds:
 \begin{itemize}
   \item[(1)] For any $i\geq n$, $\pi_i \TCn(\sT/\bS[z];\Z_p)$ is a Breuil-Kisin module.
   \item[(2)] (K(1)-local K theory comparison) Assume Conjecture \ref{kunnethK1}. For any $i\geq n$, after scalar extension along $\ol{\phi}:\mS\to \Ainf$, one recovers K(1)-local K theory of generic fiber
   \[
   \pi_i \TCn(\sT/\bS[z];\Z_p) \otimes_{\mS,\ol{\phi}} \Ainf[\frac{1}{\mu}]^{\wedge}_p \simeq \pi_i\LK K(\sT_{\Cu})\otimes_{\Z_p} \Ainf[\frac{1}{\mu}]^{\wedge}_p
   \]
   \item[(3)] (topological periodic homology theory comparison) For any $i\geq n$, after scalar extension along the map $\tilde{\phi}:\mS\to W$ which is the Frobenius on $W$ and sends $z$ to $0$, one recovers topological periodic homology theory of the special fiber:
  \[
   \pi_i \TCn(\sT/\bS[z];\Z_p)[\frac{1}{p}] \otimes_{\mS[\frac{1}{p}],\tilde{\phi}} K_0 \simeq \pi_i \TP(\sT_k;\Z_p)[\frac{1}{p}]
   \]
    \item[(4)] If there is a smooth proper variety $X$ such that $\sT=\perf(X)$, then for amy $i\geq n$, there is an isomorphism
   \[
    \pi_i \TCn(X/\bS[z];\Z_p) \otimes_{\mS,\ol{\phi}} \Ainf[\frac{1}{\mu}]^{\wedge}_p \simeq \pi_i\LK K(X_{\Cu})\otimes_{\Z_p} \Ainf[\frac{1}{\mu}]^{\wedge}_p
   \]
   \end{itemize}
\end{thm}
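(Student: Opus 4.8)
The plan is to obtain all four assertions by concatenating comparison results from the preceding subsections, using one natural number $n$ throughout. Take $n$ to be the integer of Lemma~\ref{periodlemma}(2), so that multiplication by $u$ gives isomorphisms $\pi_j\TCn(\sT/\bS[z];\Z_p)\xrightarrow{\sim}\pi_{j+2}\TCn(\sT/\bS[z];\Z_p)$ for all $j\geq n$; every ``$n$'' occurring in Theorem~\ref{BKtheorem}, Theorem~\ref{TPTCcomp}, and Theorem~\ref{TCnTPcomp3} is produced from exactly this stable-range statement, so it costs nothing to fix it once. With this $n$, assertion (1) is a restatement of Theorem~\ref{BKtheorem} and assertion (3) is a restatement of Theorem~\ref{TCnTPcomp3}.

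For assertion (2), I would begin from the isomorphism of Theorem~\ref{TPTCcomp},
\[
\pi_i \TCn(\sT/\bS[z];\Z_p)\otimes_{\mS,\ol{\phi}}\Ainf\;\simeq\;\pi_i \TP(\sT_{\sO_\Cu};\Z_p)\qquad(i\geq n),
\]
which holds for every $\sT$ (and in particular does not yet invoke Conjecture~\ref{kunnethK1}). Applying $(-)[\frac{1}{\mu}]^{\wedge}_p$ to both sides --- which on these $\Ainf$-modules simply inverts $\mu=[\epsilon]-1$ and then takes the derived $p$-completion --- turns the left-hand side into $\pi_i \TCn(\sT/\bS[z];\Z_p)\otimes_{\mS,\ol{\phi}}\Ainf[\frac{1}{\mu}]^{\wedge}_p$ and the right-hand side into $\pi_i \TP(\sT_{\sO_\Cu};\Z_p)[\frac{1}{[\epsilon]-1}]^{\wedge}_p$. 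Now I would invoke Theorem~\ref{KTCncomp} --- this is exactly where Conjecture~\ref{kunnethK1} is used --- which identifies the latter with $\pi_i \LK K(\sT_\Cu)\otimes_{\Z_p}\Ainf[\frac{1}{[\epsilon]-1}]^{\wedge}_p$. Composing the two isomorphisms, and recalling $\mu=[\epsilon]-1$, yields the displayed comparison in (2).

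Assertion (4) follows by running the identical chain for $\sT=\perf(X)$, which is legitimate since $\perf(X)$ is an $\sO_K$-linear idempotent-complete small smooth proper stable $\infty$-category so that Theorem~\ref{TPTCcomp} applies, and by replacing Theorem~\ref{KTCncomp} in the last step by Theorem~\ref{KTPcompcommutative}, whose only input is the unconditional Proposition~\ref{commkunnethK1}. Thus for $\sT=\perf(X)$ the conclusion of (2) requires no K\"unneth hypothesis.

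I do not anticipate a genuine obstacle: the substantive work --- perfectness and dualizability of $\THH(\sT/\bS[z];\Z_p)$ and its base changes (Propositions~\ref{propdualisperf}, \ref{TPTCkunneth}, \ref{THHOCperfect}, \ref{THHOCbSperf}), together with the Antieau-Mathew-Nikolaus rigidity principle (Proposition~\ref{compmonoid}) that upgrades the relevant lax monoidal functors to monoidal ones --- has already been done in the preceding subsections, and this theorem is in essence a bookkeeping assembly. The points that need care are organizational: verifying that a single $n$ serves in (1)--(4), which amounts to noting that they all originate from Lemma~\ref{periodlemma}(2); tracking which assertions actually invoke Conjecture~\ref{kunnethK1} (only (2), via Theorem~\ref{KTCncomp}), so that (1), (3), and (4) are stated unconditionally; and refraining from asserting (2) and (4) below the stable range $i\geq n$, where Theorem~\ref{TPTCcomp} does not apply.
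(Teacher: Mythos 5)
Your proposal follows exactly the paper's route---(1) from Theorem~\ref{BKtheorem}, (2) by composing Theorem~\ref{TPTCcomp} with Theorem~\ref{KTCncomp}, (3) from Theorem~\ref{TCnTPcomp3}, and (4) from Theorem~\ref{KTPcompcommutative}---with the middle step of (2), the choice of a uniform $n$ via Lemma~\ref{periodlemma}(2), and the unconditional character of (4) spelled out more explicitly than the paper's terse four-line proof. One minor caveat: Theorem~\ref{KTPcompcommutative}, and hence assertion (4) as you and the paper both use it, implicitly assumes $K/\Q_p$ is a finite extension (inherited from Proposition~\ref{commkunnethK1}), a hypothesis the statement of Theorem~\ref{NCBMS} leaves unstated.
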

\begin{proof}
  Claim (1) follows from Theorem~\ref{BKtheorem}. Claim (2) follows from Theorem~\ref{TPTCcomp} and Theorem~\ref{KTCncomp}. Claim (3) follows from Theorem~\ref{TCnTPcomp3}. Claim (4) follows from Theorem~\ref{KTPcompcommutative}.
\end{proof}
\begin{cor}\label{Bcrycor}
  We assume the conjecture~\ref{kunnethK1} holds. For an $\sO_K$-linear idempotent-complete, small smooth proper stable $\infty$-category $\sT$ and an integer $i$, there is an isomorphism of $\Bcry$-modules:
\[
\pi_i\TP(\sT_k;\Z_p)\otimes_W \Bcry \simeq \pi_i L_{K(1)}K(\sT_\Cu)\otimes_{\Z_p} \Bcry.
\]
\end{cor}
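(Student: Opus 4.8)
The plan is to combine the three comparison statements packaged in Theorem~\ref{NCBMS} with the classical ``constancy over $\Bcry$'' of Breuil--Kisin modules recorded in Lemma~\ref{BKlemma}. First I would reduce to $i\ge n$, where $n$ is the integer produced by Theorem~\ref{NCBMS}: both sides of the asserted isomorphism are $2$-periodic in $i$, since $\pi_*\TP(\sT_k;\Z_p)$ is a graded module over $\pi_*\TP(k;\Z_p)=W[\sigma^{\pm 1}]$ and $\pi_*\LK K(\sT_\Cu)$ is a graded module over $\pi_*\LK K(\Cu)=\Z_p[\beta^{\pm 1}]$, with $\sigma,\beta$ invertible of degree $2$; hence it suffices to treat one $i$ in each parity class, and we may take $i\ge n$.

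Fix such an $i$ and put $M:=\pi_i\TCn(\sT/\bS[z];\Z_p)$. By Theorem~\ref{NCBMS}(1) this is a Breuil--Kisin module, and by \cite[Proposition 4.3]{BMS1} the module $M[\frac1p]$ is finite free over $\mS[\frac1p]$, say of rank $r$. Set $M_\Ainf:=M\otimes_{\mS,\ol{\phi}}\Ainf$ and $M'[\frac1p]:=M_\Ainf[\frac1p]\otimes_{\Ainf[\frac1p]}K_0$ as in Lemma~\ref{BKlemma}; these are finite free of rank $r$ over $\Ainf[\frac1p]$, resp. $K_0$, and Lemma~\ref{BKlemma} supplies a $\varphi$-equivariant isomorphism $M_\Ainf\otimes_\Ainf\Bcry\simeq M'[\frac1p]\otimes_{K_0}\Bcry$. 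Now Theorem~\ref{NCBMS}(3) (i.e.\ Theorem~\ref{TCnTPcomp3}, via the relation among $\ol{\phi},\tilde{\phi}$ and the Frobenius of $\mS$ used in its proof and in diagram~\eqref{SAinfdiag}) identifies $M'[\frac1p]$ with $\pi_i\TP(\sT_k;\Z_p)[\frac1p]$; since $\Bcry$ is a $K_0$-algebra this yields
\[
\pi_i\TP(\sT_k;\Z_p)\otimes_W\Bcry\ \simeq\ M'[\tfrac1p]\otimes_{K_0}\Bcry\ \simeq\ M_\Ainf\otimes_\Ainf\Bcry\ \simeq\ \Bcry^{\,r},
\]
the last equivalence because $\Bcry$ is a $\Q_p$-algebra and $M_\Ainf[\frac1p]$ is free of rank $r$.

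It remains to identify the left-hand side of the corollary with $\Bcry^{\,r}$. Here I would invoke Theorem~\ref{KTCncomp} (equivalently Theorem~\ref{NCBMS}(2)): under Conjecture~\ref{kunnethK1} it gives $\pi_i\LK K(\sT_\Cu)\otimes_{\Z_p}\Ainf[\frac1\mu]^\wedge_p\simeq \pi_i\TP(\sT_{\sO_\Cu};\Z_p)[\frac1\mu]^\wedge_p\simeq M_\Ainf\otimes_\Ainf\Ainf[\frac1\mu]^\wedge_p$, using Theorem~\ref{TPTCcomp} for the last equivalence. Inverting $p$ and comparing free ranks over the nonzero ring $\Ainf[\frac1\mu]^\wedge_p[\frac1p]$ — using that $\Z_p\to\Ainf[\frac1\mu]^\wedge_p$ is faithfully flat — shows that $\pi_i\LK K(\sT_\Cu)\otimes_{\Z_p}\Q_p$ is $r$-dimensional over $\Q_p$, hence $\pi_i\LK K(\sT_\Cu)\otimes_{\Z_p}\Bcry\simeq\Bcry^{\,r}$. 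Combining with the previous display proves the corollary for $i\ge n$, and the periodicity reduction handles the general case.

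There is no deep obstacle internal to this corollary — the real work lies in establishing Theorem~\ref{NCBMS} and in the classical Lemma~\ref{BKlemma}. The one step that requires care is checking that the various Frobenius-twisted structure maps are consistent: that the $\ol{\phi}$ used to form $M_\Ainf$ in Theorem~\ref{NCBMS}(2) is the $\ol{\phi}$ of Lemma~\ref{BKlemma}, and that the isocrystal $M'[\frac1p]$ of Lemma~\ref{BKlemma} is genuinely $\pi_i\TP(\sT_k;\Z_p)[\frac1p]$ with its Frobenius, computed via $\tilde{\phi}$ in Theorem~\ref{TCnTPcomp3} — both controlled by the commutativity of \eqref{SAinfdiag}. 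I would also emphasize in the writeup that the jump from the $\Ainf[\frac1\mu]^\wedge_p$-coefficients of Theorem~\ref{NCBMS}(2) to the strictly larger $\Bcry$-coefficients is harmless precisely because $\Bcry$ is a $\Q_p$-algebra, so after inverting $p$ every module in sight is free of rank $r$ and the comparison collapses to a rank count; the finer $(\varphi,G_K,\mathrm{Fil})$-equivariant version is not claimed here and is produced separately in Theorem~\ref{main}.
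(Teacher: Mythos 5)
Your proposal is correct and follows essentially the same route the paper indicates, which is simply to combine Theorem~\ref{NCBMS} (parts (1)--(3)) with Lemma~\ref{BKlemma}. The paper's one-line proof presumably closes by extending the isomorphism of Theorem~\ref{NCBMS}(2) along a map from $\Ainf[\frac{1}{\mu}]^{\wedge}_p$ to $\Bcry$ and then chaining through Lemma~\ref{BKlemma}; you instead finish with a rank count after observing that both sides become free of the same rank $r$ over a $\Q_p$-algebra. Since the corollary only asserts an abstract $\Bcry$-module isomorphism (and not a $(\varphi,G_K,\mathrm{Fil})$-equivariant one — as you correctly flag), the rank-count finish is fully adequate and even has the mild virtue of sidestepping the point of exhibiting the needed $\Ainf[\frac{1}{\mu}]^{\wedge}_p\to\Bcry$ map. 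Your explicit two-step reduction via $2$-periodicity to $i\ge n$ is also a detail the paper leaves implicit but which is genuinely needed, since Theorem~\ref{NCBMS} only speaks about $i\ge n$ while the corollary is stated for all $i$.
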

\begin{proof}
  The claim follows from Theorem~\ref{NCBMS} amd Lemma~\ref{BKlemma}.
\end{proof}
\section{Galois representation of K(1)-local K theory}

\subsection{Breuil-Kisin $G_K$-module} Let us recall the work of Gao on Breuil-Kisin $G_K$-module \cite{Gao}. Consider the following diagram: 
\[
\xymatrix{
&& \Ainf \\
\Ainf \ar@/^18pt/[rru]^{\varphi_{\Ainf}} \ar@{}[rd]|{\square} \ar[r]^{\varphi\otimes_{\mS} 1_{\Ainf}} & \Ainf \ar[ru]^{i}& \\
  \mS \ar[u]^{\phi} \ar[r]_{\varphi} & \mS \ar[u]_{\phi}\ar@/_18pt/[ruu]_{\phi}&
}
\]
Let us denote by $\xi$ a generator of the kernel of the map $\Ainf \to \sO_\Cu$. Note that $\phi(E)=\xi$. Let $(\fM,\varphi_\fM)$ be a Breuil-Kisin module. We denote $\widehat{\fM}=\fM\otimes_{\mS,\phi}\Ainf$. After tensoring $-\otimes_{\mS,\phi} \Ainf$, Frobenius $\fM\otimes_{\mS,\varphi}\mS[\frac{1}{E}]\overset{\varphi_{\fM}}{\simeq}\fM[\frac{1}{E}]$ becomes $\widehat{\fM}\otimes_{\Ainf,\varphi\otimes 1_{\Ainf}} \Ainf[\frac{1}{\xi}] \simeq \widehat{\fM}[\frac{1}{\xi}]$, and after tensoring $-\otimes_{\Ainf,i}\Ainf$ we obtain an isomorphism
\begin{equation}\label{power}
\widehat{\varphi}_{\fM}:\widehat{\fM}\otimes_{\Ainf,\varphi_{\Ainf}} \Ainf[\frac{1}{\xi}] \simeq \widehat{\fM}[\frac{1}{\xi}].
\end{equation}

\begin{defn}[{\cite[Defn 7.1.1]{Gao}}]
Let $(\fM,\varphi_\fM)$ be a finite free Breuil-Kisin module. We call $(\fM,\varphi_\fM)$ Breuil-Kisin $G_K$-module if it satisfies the following conditions.
\begin{itemize}
  \item[(1)] There is a continuous $\Ainf$-semi-linear $G_K$-action on $\widehat{\fM}=\fM\otimes_{\mS,\phi}\Ainf$.
  \item[(2)] $G_K$ commutes with $\widehat{\varphi}_\fM$.
  \item[(3)] $\fM \subset \widehat{\fM}^{G_{K_\infty}}$ via the embedding $\fM \subset \widehat{\fM}$.
  \item[(4)] $\fM/z\fM\subset \bigl(\widehat{\fM}\otimes_{\Ainf}W(\ol{k})\bigr)^{G_K}$ via the embedding $\fM/z\fM\subset \widehat{\fM}\otimes_{\Ainf}W(\ol{k})$.
\end{itemize}
\end{defn}

The natural surjective map of rings $\sO_\Cu \to \ol{k}$ induces a morphism of ring spectra
\begin{equation}\label{TPTPOColkmap}
\TP(\sT_{\sO_\Cu};\Z_p) \to \TP(\sT_{\ol{k}};\Z_p).
\end{equation}
By an equivalence $\THH(\sT_{\ol{k}};\Z_p)\simeq \THH(\ol{k};\Z_p) \otimes_{\THH(\sO_\Cu;\Z_p)} \THH(\sT_{\sO_\Cu};\Z_p)$ and Proposition~\ref{THHOCbSperf}, we have the following.
\begin{prop}\label{THHolkperf}
For an $\sO_K$-linear idempotent-complete, small smooth proper stable infinity category $\sT$, $\THH(\sT_{\ol{k}};\Z_p)$ is a perfect object in $\Mod_{\THH({\ol{k}};\Z_p)}(\Sp^{BS^1})$.
\end{prop}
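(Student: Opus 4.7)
The plan is to follow exactly the hint given just before the statement: reduce from $\ol{k}$ to $\sO_\Cu$ via a base change identification, and then transport perfectness along the induced symmetric monoidal functor on module categories.

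First I would establish the base change equivalence
\[
\THH(\sT_{\ol{k}};\Z_p)\simeq \THH(\ol{k};\Z_p) \otimes_{\THH(\sO_\Cu;\Z_p)} \THH(\sT_{\sO_\Cu};\Z_p)
\]
in $\Mod_{\THH(\ol{k};\Z_p)}(\Sp^{BS^1})$. This follows from the standard fact that $\THH(-;\Z_p)$ is symmetric monoidal as a functor out of the $\infty$-category of $R$-linear small idempotent-complete stable $\infty$-categories, applied to the identification $\sT_{\ol{k}} \simeq \sT_{\sO_\Cu}\otimes_{\sO_\Cu}\ol{k}$ induced by the surjection $\sO_\Cu\to\ol{k}$ used to define the map \eqref{TPTPOColkmap}.

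Next I would invoke Proposition~\ref{THHOCbSperf} to conclude that $\THH(\sT_{\sO_\Cu};\Z_p)$ is perfect in $\Mod_{\THH(\sO_\Cu;\Z_p)}(\Sp^{BS^1})$. The extension of scalars functor
\[
-\otimes_{\THH(\sO_\Cu;\Z_p)}\THH(\ol{k};\Z_p):\Mod_{\THH(\sO_\Cu;\Z_p)}(\Sp^{BS^1})\to\Mod_{\THH(\ol{k};\Z_p)}(\Sp^{BS^1})
\]
is exact and symmetric monoidal, and it sends the unit to the unit. Such a functor automatically preserves the thick subcategory generated by the unit, i.e.\ it sends perfect objects to perfect objects (in the sense of Definition~1.2 of \cite{KunnethTP} as recalled in the paper). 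Applying this to $\THH(\sT_{\sO_\Cu};\Z_p)$ and using the equivalence of the first step gives the result.

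The main (and only nontrivial) point is really the first step: verifying that the base change identity holds in the cyclotomic/$S^1$-equivariant setting, which is established by the symmetric monoidality of $\THH(-/\bS;\Z_p)$ on small idempotent-complete stable $\infty$-categories together with the relation $\sT_{\ol{k}} \simeq \sT_{\sO_\Cu}\otimes_{\sO_\Cu}\ol{k}$. Everything else is a formal consequence, so the proposition essentially records the transport of Proposition~\ref{THHOCbSperf} along a symmetric monoidal base change, and can be stated in one or two lines.
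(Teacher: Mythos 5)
Your proof is correct and follows exactly the argument the paper intends: establish the base change equivalence $\THH(\sT_{\ol{k}};\Z_p)\simeq \THH(\ol{k};\Z_p) \otimes_{\THH(\sO_\Cu;\Z_p)} \THH(\sT_{\sO_\Cu};\Z_p)$ in $\Mod_{\THH(\ol{k};\Z_p)}(\Sp^{BS^1})$, invoke Proposition~\ref{THHOCbSperf}, and observe that the exact, unit-preserving scalar extension functor carries the thick subcategory generated by the unit into itself. This is precisely the one-line reasoning the paper records in the sentence preceding the proposition.
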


Let us recall calculations of $\TP$ of $\sO_\Cu$ and $\ol{k}$ from \cite{BMS2}. On homotopy groups,
\begin{equation}
\varphi^{hS^1}_{\sO_\Cu}:  \pi_* \TCn(\sO_\Cu;\Z_p) \to \pi_*\TP(\sO_\Cu;\Z_p)
\end{equation}
is given by 
\[
\Ainf[u,v]/(uv-\xi) \to \Ainf[\sigma^\pm]
\]
which is $\varphi_{\Ainf}$-linear and sends $u$ to $\sigma$ and $v$ to $\varphi_{\Ainf}(\xi)\sigma^{-1}$. Similarly, on homotopy groups,
\begin{equation}
\varphi^{hS^1}_{\ol{k}}: \pi_* \TCn(\ol{k};\Z_p) \to \pi_*\TP(\ol{k};\Z_p)
\end{equation}
is given by 
\[
W(\ol{k})[u,v]/(uv-p) \to W(\ol{k})[\sigma^\pm]
\]
which is $\varphi_{W(\ol{k})}$-linear and sends $u$ to $\sigma$ and $v$ to $p\sigma^{-1}$.

\begin{thm}\label{TPTPOColkcomp}
Let $\sT$ be an $\sO_K$-linear idempotent-complete, small smooth proper stable infinity category. The morphism~\eqref{TPTPOColkmap} induces an isomorphism
\[
\pi_i\TP(\sT_{\sO_\Cu};\Z_p)[\frac{1}{p}] \otimes_{\Ainf[\frac{1}{p}]} W(\ol{k})[\frac{1}{p}] \simeq \pi_i\TP(\sT_{\ol{k}};\Z_p)[\frac{1}{p}]
\]
for any $i$. In particular, $\pi_i\TP(\sT_{\sO_\Cu};\Z_p)[\frac{1}{p}]\otimes_{\Ainf[\frac{1}{p}]} W(\ol{k})[\frac{1}{p}]$ is fixed by $G_{K^{ur}}$.
\end{thm}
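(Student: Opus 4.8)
The plan is to mirror the strategy used for Theorems~\ref{TPTCcomp} and \ref{TCnTPcomp3}: first establish the comparison at the level of spectra after inverting $p$ by appealing to the rigidity of symmetric monoidal functors on a category of dualizable objects (Proposition~\ref{compmonoid}), and then read off the homotopy-group statement using the finite freeness provided by the Breuil--Kisin module structure.

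\textbf{Step 1 (K\"unneth after inverting $p$).} I would first note that the base change functors $\sT\mapsto\sT_{\sO_\Cu}$ and $\sT\mapsto\sT_{\ol{k}}$ out of $\Catsat(\sO_K)$ are symmetric monoidal, so $\sT\mapsto\THH(\sT_{\sO_\Cu};\Z_p)$ and $\sT\mapsto\THH(\sT_{\ol{k}};\Z_p)$ are symmetric monoidal into $\Mod_{\THH(\sO_\Cu;\Z_p)}(\Sp^{BS^1})$ and $\Mod_{\THH(\ol{k};\Z_p)}(\Sp^{BS^1})$. By Propositions~\ref{THHOCbSperf} and \ref{THHolkperf} their values on a smooth proper $\sT$ are perfect, so applying the lax symmetric monoidal exact functor $(-)^{tS^1}$, inverting $p$, and extending scalars along $\TP(\sO_\Cu;\Z_p)[\frac{1}{p}]\to\TP(\ol{k};\Z_p)[\frac{1}{p}]$ produces genuinely symmetric monoidal functors
\[
\sT\mapsto \TP(\sT_{\sO_\Cu};\Z_p)[\tfrac{1}{p}]\otimes_{\TP(\sO_\Cu;\Z_p)[\frac{1}{p}]}\TP(\ol{k};\Z_p)[\tfrac{1}{p}] \quad\text{and}\quad \sT\mapsto \TP(\sT_{\ol{k}};\Z_p)[\tfrac{1}{p}]
\]
from $\Catsat(\sO_K)$ to $\Mod_{\TP(\ol{k};\Z_p)[\frac{1}{p}]}(\Sp)$. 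The map induced by \eqref{TPTPOColkmap} is a symmetric monoidal natural transformation between them, and every object of $\Catsat(\sO_K)$ is dualizable, so Proposition~\ref{compmonoid} yields an equivalence
\[
\TP(\sT_{\sO_\Cu};\Z_p)[\tfrac{1}{p}]\otimes_{\TP(\sO_\Cu;\Z_p)[\frac{1}{p}]}\TP(\ol{k};\Z_p)[\tfrac{1}{p}]\simeq\TP(\sT_{\ol{k}};\Z_p)[\tfrac{1}{p}],
\]
which is $G_K$-equivariant because \eqref{TPTPOColkmap} is induced by the $G_K$-equivariant surjection $\sO_\Cu\to\ol{k}$.

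\textbf{Step 2 (Homotopy groups and $G_{K^{ur}}$-invariance).} Recalling $\pi_*\TP(\sO_\Cu;\Z_p)[\frac{1}{p}]\simeq\Ainf[\frac{1}{p}][\sigma^{\pm}]$ and $\pi_*\TP(\ol{k};\Z_p)[\frac{1}{p}]\simeq W(\ol{k})[\frac{1}{p}][\sigma^{\pm}]$, with the ring map induced by the functorial $\Ainf=W(\sO_\Cu^\flat)\to W(\ol{k})$ and $\sigma\mapsto\sigma$, I would use Theorem~\ref{TPTCcomp} to identify $\pi_i\TP(\sT_{\sO_\Cu};\Z_p)$ with $\pi_i\TCn(\sT/\bS[z];\Z_p)\otimes_{\mS,\ol{\phi}}\Ainf$ for $i\geq n$, and Theorem~\ref{BKtheorem} together with \cite[Proposition~4.3]{BMS1} to conclude that $\pi_i\TCn(\sT/\bS[z];\Z_p)[\frac{1}{p}]$ is finite free over $\mS[\frac{1}{p}]$ for $i\geq n$; hence $\pi_i\TP(\sT_{\sO_\Cu};\Z_p)[\frac{1}{p}]$ is finite free over $\Ainf[\frac{1}{p}]$ for $i\geq n$, and for all $i$ by the $2$-periodicity of $\TP$. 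Flatness of $\pi_*\TP(\sT_{\sO_\Cu};\Z_p)[\frac{1}{p}]$ over $\pi_*\TP(\sO_\Cu;\Z_p)[\frac{1}{p}]$ then makes the base-change spectral sequence for the tensor product of Step~1 degenerate, and reading off degree $i$ (using $\sigma\mapsto\sigma$ and $2$-periodicity) gives the desired $G_K$-equivariant isomorphism $\pi_i\TP(\sT_{\sO_\Cu};\Z_p)[\frac{1}{p}]\otimes_{\Ainf[\frac{1}{p}]}W(\ol{k})[\frac{1}{p}]\simeq\pi_i\TP(\sT_{\ol{k}};\Z_p)[\frac{1}{p}]$. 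For the final clause I would observe that $G_{K^{ur}}=\ker(G_K\twoheadrightarrow G_k)$ acts trivially on $\ol{k}$, hence on $\pi_i\TP(\sT_{\ol{k}};\Z_p)[\frac{1}{p}]$, and transport this triviality across the isomorphism.

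The step I expect to be the main obstacle is the lax-to-strict upgrade in Step~1: to place both functors in the situation of Proposition~\ref{compmonoid} one must know that $\THH(\sT_{\sO_\Cu};\Z_p)$ and $\THH(\sT_{\ol{k}};\Z_p)$ are perfect modules, which is precisely the content of Propositions~\ref{THHOCbSperf} and \ref{THHolkperf} and ultimately rests on the regular-noetherian input behind Proposition~\ref{propdualisperf} and the K\"unneth results of \cite{KunnethTP}. A second, milder point is that one needs finite freeness, not merely finite generation, of $\pi_i\TP(\sT_{\sO_\Cu};\Z_p)[\frac{1}{p}]$ over $\Ainf[\frac{1}{p}]$ in order to compute the tensor product degreewise; this is exactly where the Breuil--Kisin module structure from Theorem~\ref{BKtheorem} enters, and without it one would be left with only an equivalence of spectra rather than the clean homotopy-group formula.
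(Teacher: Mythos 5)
Your proposal matches the paper's proof almost step for step: both invoke Propositions~\ref{THHOCbSperf} and \ref{THHolkperf} together with Proposition~\ref{compmonoid} to upgrade the K\"unneth map \eqref{TPTPOColkmap2} to an equivalence, and both then use Theorem~\ref{TPTCcomp}, Theorem~\ref{BKtheorem} with \cite[Proposition~4.3]{BMS1}, and $2$-periodicity to get finite freeness of $\pi_i\TP(\sT_{\sO_\Cu};\Z_p)[\frac{1}{p}]$ over $\Ainf[\frac{1}{p}]$ before reading off the degree-$i$ statement via flatness. Your additional remarks on $G_K$- and $G_{K^{ur}}$-equivariance are correct and merely spell out what the paper leaves implicit.
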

\begin{proof}
The morphism~\eqref{TPTPOColkmap} yields a morphism
\begin{equation}\label{TPTPOColkmap2}
\TP(\sT_{\sO_\Cu};\Z_p)[\frac{1}{p}]\otimes_{\TP(\sO_\Cu;\Z_p)[\frac{1}{p}]} \TP(\ol{k};\Z_p)[\frac{1}{p}] \to \TP(\sT_{\ol{k}};\Z_p)[\frac{1}{p}].
\end{equation}
 By Proposition~\ref{THHOCbSperf} and Proposition~\ref{THHolkperf}, both sides of the map \eqref{TPTPOColkmap} yield symmetric monoidal functors from $\Catsat(\sO_K)$ to $\Mod_{\TP(\ol{k};\Z_p)}(\Sp)$, and the map \eqref{TPTPOColkmap2} yield a symmetric monoidal natural transformation between them. Thus the morphism \eqref{TPTPOColkmap2} is an equivalence. By Theorem~\ref{BKtheorem} and \cite[Proposition 4.3]{BMS1}, there is $n$ such that for any $i\geq n$, the homotopy group $\pi_i\TCn(\sT/\bS[z];\Z_p)$ is a Breuil-Kisin module, and $\pi_i\TCn(\sT/\bS[z];\Z_p)[\frac{1}{p}]$ is a finite free $\mS[\frac{1}{p}]$-module. By Theorem~\ref{TPTCcomp} and the fact that $\TP(\sT_{\sO_\Cu};\Z_p)$ is $2$-periodic, $\pi_i\TP(\sT_{\sO_\Cu};\Z_p)[\frac{1}{p}]$ is a finite free $\Ainf[\frac{1}{p}]$-module for any $i$. Thus we have an isomorphism of graded $\pi_*\TP(\sO_\Cu;\Z_p)[\frac{1}{p}]$-modules \begin{eqnarray}\label{flatmap2}
 \pi_*\TP(\sT_{\sO_\Cu};\Z_p)[\frac{1}{p}]\simeq \pi_0\TP(\sT_{\sO_\Cu};\Z_p)[\frac{1}{p}]\otimes_{\Ainf[\frac{1}{p}]} \pi_*\TP(\sO_\Cu;\Z_p)[\frac{1}{p}] \\
 \oplus \pi_1\TP(\sT_{\sO_\Cu};\Z_p)[\frac{1}{p}]\otimes_{\Ainf[\frac{1}{p}]}\pi_*\TP(\sO_\Cu;\Z_p)[\frac{1}{p}],\nonumber
  \end{eqnarray}
  thus $\pi_*\TP(\sT_{\sO_\Cu};\Z_p)[\frac{1}{p}]$ is a flat graded $\pi_*\TP(\sO_\Cu;\Z_p)[\frac{1}{p}]$-module. We obtain the claim.
\end{proof}

\begin{lemma}\label{periodlemma2}
Let $\sT$ be an $\sO_K$-linear idempotent-complete, small smooth proper stable infinity category. Then the followings hold:
\begin{itemize}
  \item[(1)] $\pi_*\TCn(\sT_{\sO_\Cu};\Z_p)$ is a finitely generated $\pi_*\TCn(\sO_\Cu/\bS[z];\Z_p)$-module, thus all $\pi_i\TCn(\sT_{\sO_\Cu};\Z_p)$ is finitely generated $\Ainf$-module.
  \item[(2)] There is a natural number $n$ satisfying that for any $j \geq n$, $\pi_j\TCn(\sT_{\sO_\Cu};\Z_p)\overset{u\cdot }{\to} \pi_{j+2}\TCn(\sT_{\sO_\Cu};\Z_p)$ is an isomorphism. 
\end{itemize}
\end{lemma}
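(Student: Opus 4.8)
The plan is to reduce both assertions to Lemma~\ref{periodlemma} by exhibiting $\pi_i\TCn(\sT_{\sO_\Cu};\Z_p)$ as a flat base change of $\pi_i\TCn(\sT/\bS[z];\Z_p)$ along the $W$-linear map $\phi\colon\mS\to\Ainf$, $z\mapsto[\pi^\flat]$ (so $\phi(E)=\xi$); here $\TCn(\sO_\Cu;\Z_p)$ denotes the spectrum with $\pi_*\cong\Ainf[u,v]/(uv-\xi)$ recalled just above the statement. The only real work is the base-change step, which I carry out first.

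The main step is to produce an equivalence
\[
\TCn(\sT_{\sO_\Cu};\Z_p)\;\simeq\;\TCn(\sT/\bS[z];\Z_p)\otimes_{\TCn(\sO_K/\bS[z];\Z_p)}\TCn(\sO_\Cu;\Z_p).
\]
Chaining the two base changes used in the proof of Proposition~\ref{THHOCperfect} with Lemma~\ref{BMSTHHlemma} already gives, in $\Mod_{\THH(\sO_\Cu;\Z_p)}(\Sp^{BS^1})$,
\[
\THH(\sT_{\sO_\Cu};\Z_p)\;\simeq\;\THH(\sT/\bS[z];\Z_p)\otimes_{\THH(\sO_K/\bS[z];\Z_p)}\THH(\sO_\Cu;\Z_p).
\]
Since $\THH(\sT/\bS[z];\Z_p)$ is perfect over $\THH(\sO_K/\bS[z];\Z_p)$ (Proposition~\ref{TPTCkunneth}), applying the lax symmetric monoidal, exact functor $(-)^{hS^1}$ commutes with this relative tensor product (the map~\eqref{laxmon} is an equivalence for a perfect first argument), and $\THH(\sO_\Cu;\Z_p)^{hS^1}=\TCn(\sO_\Cu;\Z_p)$; this yields the displayed equivalence. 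On homotopy groups, the structure map $\pi_*\TCn(\sO_K/\bS[z];\Z_p)=\mS[u,v]/(uv-E)\to\Ainf[u,v]/(uv-\xi)=\pi_*\TCn(\sO_\Cu;\Z_p)$ is, by \cite[section 11]{BMS2}, the base change of $\phi$ along $\mS\to\Ainf$ (so $E\mapsto\xi=\phi(E)$, $u\mapsto u$, $v\mapsto v$), and $\phi$ is flat: indeed $\ol\phi=\varphi_{\Ainf}\circ\phi$ is flat by \cite[Lemma 4.30]{BMS1} and $\varphi_{\Ainf}$ is an isomorphism. Flat base change on homotopy groups then gives, for every $i$,
\[
\pi_i\TCn(\sT_{\sO_\Cu};\Z_p)\;\cong\;\pi_i\TCn(\sT/\bS[z];\Z_p)\otimes_{\mS,\phi}\Ainf
\]
as $\pi_*\TCn(\sO_\Cu;\Z_p)$-modules.

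Granting this, (1) is immediate: $\pi_i\TCn(\sT/\bS[z];\Z_p)$ is a finitely generated $\mS$-module by Lemma~\ref{periodlemma}(1), hence its base change along $\phi$ is a finitely generated $\Ainf$-module, and $\pi_*\TCn(\sT_{\sO_\Cu};\Z_p)$ is finitely generated over $\pi_*\TCn(\sO_\Cu;\Z_p)$ because $\pi_*\TCn(\sT/\bS[z];\Z_p)$ is finitely generated over $\pi_*\TCn(\sO_K/\bS[z];\Z_p)$. For (2), take $n$ as in Lemma~\ref{periodlemma}(2); for $j\geq n$ the map $u\cdot\colon\pi_j\TCn(\sT/\bS[z];\Z_p)\to\pi_{j+2}\TCn(\sT/\bS[z];\Z_p)$ is an isomorphism, and applying the exact functor $(-)\otimes_{\mS,\phi}\Ainf$ shows the same for $u\cdot\colon\pi_j\TCn(\sT_{\sO_\Cu};\Z_p)\to\pi_{j+2}\TCn(\sT_{\sO_\Cu};\Z_p)$.

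The delicate point is the base-change step: one must check that the equivalence $\THH(\sT_{\sO_\Cu};\Z_p)\simeq\THH(\sT/\bS[z];\Z_p)\otimes_{\THH(\sO_K/\bS[z];\Z_p)}\THH(\sO_\Cu;\Z_p)$ is literally the composite appearing in the proof of Proposition~\ref{THHOCperfect}, so that the $\TCn$-level base ring is $\TCn(\sO_\Cu;\Z_p)$ and the induced map $\mS\to\Ainf$ is $\phi$, and that $\phi$ is flat. An alternative, closer to the proof of Lemma~\ref{periodlemma}, is to apply $(-)^{hS^1}$ directly to the perfect object of Proposition~\ref{THHOCbSperf} to see that $\TCn(\sT_{\sO_\Cu};\Z_p)$ is a perfect $\TCn(\sO_\Cu;\Z_p)$-module; but then one has to contend with the fact that $\Ainf$, unlike the Noetherian ring $\mS=W[[z]]$ of Lemma~\ref{periodlemma}, is not Noetherian, and argue separately — using coherence of $\Ainf$ and the boundedness above of the $u$-power-torsion — that the homotopy of a perfect $\TCn(\sO_\Cu;\Z_p)$-module is degreewise finitely generated over $\Ainf$ and eventually $u$-periodic. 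The base-change route sidesteps this entirely, and is the one I would follow.
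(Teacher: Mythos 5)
Your proof is correct, but it takes a genuinely different route from the paper's. The paper's proof of Lemma~\ref{periodlemma2} directly parallels that of Lemma~\ref{periodlemma}: claim~(1) is asserted to follow from Proposition~\ref{THHOCbSperf} (perfectness of $\THH(\sT_{\sO_\Cu};\Z_p)$ over $\THH(\sO_\Cu;\Z_p)$), and claim~(2) is then deduced from (1) together with the $u$-periodicity of $\pi_*\TCn(\sO_\Cu;\Z_p)$. You instead exhibit $\pi_i\TCn(\sT_{\sO_\Cu};\Z_p)$ as the flat base change $\pi_i\TCn(\sT/\bS[z];\Z_p)\otimes_{\mS,\phi}\Ainf$ — obtained by tracing the chain of relative-$\THH$ base changes and applying $(-)^{hS^1}$ to the perfect module, precisely as in the proof of Proposition~\ref{TPTCkunneth} — and then import both conclusions from Lemma~\ref{periodlemma}. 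The two arguments buy different things: the paper's is shorter on the page but, as you rightly observe, the step ``perfect $\TCn(\sO_\Cu;\Z_p)$-module $\Rightarrow$ degreewise finitely generated $\Ainf$-module'' is not on the same footing as in the $\sO_K$ case because $\Ainf$, unlike $\mS=W[[z]]$, is not Noetherian (the $\sO_K$ argument rests on Proposition~\ref{propdualisperf}, whose proof invokes \cite[Theorem~2.15]{KunnethTP} and explicitly needs a regular Noetherian homotopy ring); it implicitly needs coherence of $\Ainf$ or some substitute. Your base-change route sidesteps that issue entirely and places the Noetherian input back where it is already available. The one point to make explicit when tidying this up is the identification, on $\pi_0$, of the structure map $\pi_0\TCn(\sO_K/\bS[z];\Z_p)=\mS\to\pi_0\TCn(\sO_\Cu;\Z_p)=\Ainf$ with $\phi:z\mapsto[\pi^\flat]$, so that $E\mapsto\xi$; this is exactly the content of \cite[\S 11]{BMS2} and is also what the paper itself relies on later in~\eqref{BIWAKOUP}.
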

\begin{proof}
Claim (1) directly follows from Proposition ~\ref{THHOCbSperf}. For any $j \geq 0$, by the calculation of $\pi_* \TCn({\sO_\Cu};\Z_p)$ (see \cite[Proposition 11.10]{BMS2}), we know $\pi_j\TCn({\sO_\Cu};\Z_p)\overset{u\cdot}{\to} \pi_{j+2}\TCn({\sO_\Cu};\Z_p)$ is an isomorphism. These yields claim (2) by claim (1).
\end{proof}

\begin{thm}\label{TPTCnOCOCcomp}
Let $\sT$ be an $\sO_K$-linear idempotent-complete, small smooth proper stable infinity category. There is a $n$ such that the cyclotomic Frobenius morphism $\varphi^{hS^1}_{\sT_{\sO_\Cu}}:\TCn(\sT_{\sO_\Cu};\Z_p) \to \TP(\sT_{\sO_\Cu};\Z_p)$ induces a $G_K$-equivariant isomorphism
\begin{equation}\label{TCnTPOCAinfequi}
\pi_i\TCn(\sT_{\sO_\Cu};\Z_p)\otimes_{\Ainf,\varphi_{\Ainf}} \Ainf \simeq \pi_i\TP(\sT_{\sO_\Cu};\Z_p)
\end{equation}
for any $i\geq n$. 
\end{thm}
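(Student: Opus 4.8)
The plan is to repeat, over the base $\sO_\Cu$, the argument used for Theorem~\ref{BKtheorem} and Theorem~\ref{TPTCcomp}, now with $\Ainf$, $\xi$ and $\varphi_{\Ainf}$ playing the roles of $\mS$, $E$ and $\varphi$. Recall from the computations of \cite[\S11]{BMS2} reproduced above that on homotopy groups $\varphi^{hS^1}_{\sO_\Cu}\colon \pi_*\TCn(\sO_\Cu;\Z_p)\to\pi_*\TP(\sO_\Cu;\Z_p)$ is the $\varphi_{\Ainf}$-semilinear map $\Ainf[u,v]/(uv-\xi)\to \Ainf[\sigma^{\pm}]$ sending $u\mapsto\sigma$ and $v\mapsto\varphi_{\Ainf}(\xi)\sigma^{-1}$, and that $\sigma$ is invertible in $\TP(\sO_\Cu;\Z_p)$. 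First I would observe that, since $u$ maps to a unit, $\varphi^{hS^1}$ factors through the localization at $u$ for every $\sO_\Cu$-linear stable $\infty$-category, producing a morphism of $\TP(\sO_\Cu;\Z_p)$-modules
\[
\underline{\varphi}^{hS^1}_{\sT_{\sO_\Cu}}\colon \TCn(\sT_{\sO_\Cu};\Z_p)\bigl[\tfrac{1}{u}\bigr]\longrightarrow \TP(\sT_{\sO_\Cu};\Z_p)
\]
which is natural in the input $\infty$-category and in $\sO_\Cu$ with its $G_K$-action.

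The main step would be to show that the associated base-change map
\[
\TCn(\sT_{\sO_\Cu};\Z_p)\bigl[\tfrac{1}{u}\bigr]\otimes_{\TCn(\sO_\Cu;\Z_p)[\frac{1}{u}],\,\underline{\varphi}^{hS^1}_{\sO_\Cu}}\TP(\sO_\Cu;\Z_p)\longrightarrow \TP(\sT_{\sO_\Cu};\Z_p)
\]
is an equivalence. By Proposition~\ref{THHOCbSperf}, $\THH(\sT_{\sO_\Cu};\Z_p)$ is perfect in $\Mod_{\THH(\sO_\Cu;\Z_p)}(\Sp^{BS^1})$; applying the lax symmetric monoidal exact functors $(-)^{hS^1}$ and $(-)^{tS^1}$ as in the proof of Proposition~\ref{TPTCkunneth}, I would deduce that $\TCn(\sT_{\sO_\Cu};\Z_p)$ and $\TP(\sT_{\sO_\Cu};\Z_p)$ are perfect over $\TCn(\sO_\Cu;\Z_p)$ and $\TP(\sO_\Cu;\Z_p)$ respectively, so that both sides of the displayed map underlie symmetric monoidal functors $\Catsat(\sO_K)\to\Mod_{\TP(\sO_\Cu;\Z_p)}(\Sp)$ with the map a symmetric monoidal natural transformation between them. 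Since every object of $\Catsat(\sO_K)$ is dualizable, Proposition~\ref{compmonoid} then forces the map to be an equivalence, and naturality in $\sO_\Cu$ makes it $G_K$-equivariant.

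Finally I would pass to homotopy groups: identifying $\pi_*\TCn(\sO_\Cu;\Z_p)[\tfrac1u]\simeq\Ainf[u^{\pm}]$ and $\pi_*\TP(\sO_\Cu;\Z_p)\simeq\Ainf[\sigma^{\pm}]$ with $u\mapsto\sigma$, the map $\underline{\varphi}^{hS^1}_{\sO_\Cu}$ is, in each degree, the map $\varphi_{\Ainf}\colon\Ainf\to\Ainf$, which is an isomorphism (diagram~\eqref{SAinfdiag}); hence the base change above is computed by an ordinary tensor product in each degree and yields a $G_K$-equivariant isomorphism $\pi_i\TCn(\sT_{\sO_\Cu};\Z_p)[\tfrac1u]\otimes_{\Ainf,\varphi_{\Ainf}}\Ainf\simeq\pi_i\TP(\sT_{\sO_\Cu};\Z_p)$ for all $i$. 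By Lemma~\ref{periodlemma2}(2) there is a natural number $n$ with $\pi_i\TCn(\sT_{\sO_\Cu};\Z_p)\iso\pi_i\TCn(\sT_{\sO_\Cu};\Z_p)[\tfrac1u]$ for $i\geq n$, and substituting this in gives the theorem. The only genuinely delicate point I anticipate is the bookkeeping of the semilinear $G_K$-actions throughout — which is automatic here, since $\varphi^{hS^1}$, the cyclotomic structure maps, and all base changes are natural in the $\sO_\Cu$-algebra argument — so that the proof amounts essentially to transcribing, over $\sO_\Cu$, the computations already carried out above.
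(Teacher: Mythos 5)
Your proposal is correct and follows essentially the same route as the paper's proof: invert $u$ to form $\underline{\varphi}^{hS^1}_{\sT_{\sO_\Cu}}$, use perfectness of $\THH(\sT_{\sO_\Cu};\Z_p)$ (Proposition~\ref{THHOCbSperf}) together with Proposition~\ref{compmonoid} to show the base-change comparison is an equivalence, identify $\underline{\varphi}^{hS^1}_{\sO_\Cu}$ on homotopy as the $\varphi_{\Ainf}$-semilinear isomorphism $\Ainf[u^{\pm}]\to\Ainf[\sigma^{\pm}]$, and conclude via Lemma~\ref{periodlemma2}(2). The only difference is cosmetic: you spell out that $\TCn$ and $\TP$ are perfect because $(-)^{hS^1}$ and $(-)^{tS^1}$ are lax symmetric monoidal and exact, a step the paper leaves implicit.
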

\begin{proof} Cyclotomic Frobenius map $\varphi^{hS^1}_{\sT_{\sO_\Cu}}:\TCn(\sT_{\sO_\Cu};\Z_p) \to \TP(\sT_{\sO_\Cu};\Z_p)$ yields a morphism 
\[
\underline{\varphi}^{hS^1}_{\sT_{\sO_\Cu}}:\TCn(\sT_{\sO_\Cu};\Z_p)[\frac{1}{u}] \to \TP(\sT_{\sO_\Cu};\Z_p).
\]
The morphism $\underline{\varphi}^{hS^1}_{\sT_{\sO_\Cu}}$ induces a morphism of $\TP(\sO_\Cu;\Z_p)$-module: 
  \begin{equation}\label{TCnTPOCequi}
  \TCn(\sT_{\sO_\Cu};\Z_p)[\frac{1}{u}]\otimes_{\TCn(\sO_\Cu;\Z_p)[\frac{1}{u}],\underline{\varphi}^{hS^1}_{\sO_\Cu}} \TP(\sO_\Cu;\Z_p)\to \TP(\sT_{\sO_\Cu};\Z_p).
  \end{equation}
  Both sides of the map \eqref{KTCnequi} yield symmetric monoidal functors from $\Catsat(\sO_K)$ to $\Mod_{\TP(\sO_\Cu;\Z_p)}(\Sp)$, thus the map \eqref{KTCnequi} is an equivalence by Proposition~\ref{THHOCbSperf}. On homotopy groups, $\underline{\varphi}^{hS^1}_{\sO_\Cu}: \pi_*\TCn(\sO_\Cu;\Z_p)[\frac{1}{u}] \to \pi_* \TP(\sO_\Cu;\Z_p)$ is given by $\varphi_{\Ainf}$-linear map
  \[
  \Ainf[u^\pm] \to \Ainf[\sigma^\pm],
  \]
  thus is an isomorphism. Thus we obtain the isomorphism
  \[
  \pi_i \TCn(\sT_{\sO_\Cu};\Z_p)[\frac{1}{u}] \otimes_{\Ainf,\varphi_{\Ainf}} \pi_i \TP(\sT_{\sO_\Cu};\Z_p)
  \]
  for any $i$. By Lemma~\ref{periodlemma2}, there is a natural number $n$ such that $\pi_i \TCn(\sT_{\sO_\Cu};\Z_p)\simeq \pi_i \TCn(\sT_{\sO_\Cu};\Z_p)[\frac{1}{u}]$ for any $i \geq n$.
\end{proof}

For a Breuil-Kisin module $\fM$, let us denote $\fM^{\vee}=\Hom_{\mS}(\fM,\mS)$. We note $\fM^{\vee}=\Hom_{\mS}(\fM,\mS)$ is a Breuil-Kisin module whose Frobenius map $\varphi_{\fM^{\vee}}$ is given by 
\begin{equation}\label{dualFrob}
  \fM^\vee \otimes_{\mS,\phi} \mS[\frac{1}{E}] \simeq \Hom_{\mS[\frac{1}{E}]}(\fM\otimes_{\mS,\phi} \mS[\frac{1}{E}],\mS[\frac{1}{E}]) \overset{\phi_{\fM}^\vee}{\simeq} \Hom_{\mS[\frac{1}{E}]}(\fM[\frac{1}{E}],\mS[\frac{1}{E}]) \simeq \fM^\vee[\frac{1}{E}],
\end{equation}
where we use facts that $\phi:\mS\to\mS$ is flat and $\mS\to \mS[\frac{1}{E}]$ is flat. Since $\phi:\mS\to \Ainf$ is flat, $\widehat{\fM^{\vee}} \simeq (\widehat{\fM})^{\vee}:= \Hom_{\Ainf}(\widehat{\fM},\Ainf)$. we have an isomorphism
\begin{eqnarray}\label{dualAinfFrob}
(\widehat{\fM})^{\vee}\otimes_{\Ainf,\varphi_{\Ainf}}\Ainf[\frac{1}{\xi}]\simeq \Hom_{\Ainf[\frac{1}{\xi}]}(\widehat{\fM}\otimes_{\Ainf,\varphi_{\Ainf}}\Ainf[\frac{1}{\xi}], \Ainf[\frac{1}{\xi}])\\
\overset{\widehat{\varphi_{{\fM}}}^{\vee}}{\simeq} \Hom_{\Ainf[\frac{1}{\xi}]}(\widehat{\fM}[\frac{1}{\xi}], \Ainf[\frac{1}{\xi}]) \simeq (\widehat{\fM})^{\vee}[\frac{1}{\xi}], \nonumber
\end{eqnarray}
and it is the same as $\widehat{\varphi}_{\fM^\vee}$ since the morphism~\eqref{dualAinfFrob} is given by the tensoring $-\otimes_{\mS,\phi}\Ainf$ of \eqref{dualFrob}. We assume that there is a continuous $\Ainf$-semi-linear $G_K$-action on $\widehat{\fM}=\fM\otimes_{\mS,\phi}\Ainf$ and $G_K$ commutes with $\widehat{\varphi}_\fM$. All constructions in \eqref{dualAinfFrob} are functorial, and thus there is a continuous $\Ainf$-semi-linear $G_K$-action on $\widehat{\fM}^{\vee}=\fM^{\vee}\otimes_{\mS,\phi}\Ainf$ and $G_K$ commutes with $\widehat{\varphi}_{\fM^\vee}$.

For an $\sO_K$-linear idempotent-complete, small smooth proper stable $\infty$-category $\sT$, we will prove that the free Breuil-Kisin module $\pi_i \TCn(\sT/\bS[z];\Z_p)_{\free}:=\pi_i \TCn(\sT/\bS[z];\Z_p)^{\vee\vee}$ is a Breuil-Kisin $G_K$-module. 

\begin{thm}\label{BKGKmodulethm}
  Let $\sT$ be an $\sO_K$-linear idempotent-complete, small smooth proper stable infinity category. There is a $n$ such that for any $i\geq n$, the followings holds:
  \begin{itemize}
    \item[(1)] There is a continuous $\Ainf$-semi-linear $G_K$-action on $\widehat{\pi_i\TCn(\sT/\bS[z];\Z_p)_{\free}}=\pi_i\TCn(\sT/\bS[z];\Z_p)_{\free}\otimes_{\mS,\phi}\Ainf$
    \item[(2)] $G_K$ commutes with $\widehat{\varphi}_{\pi_i \TCn(\sT/\bS[z];\Z_p)_{\free}}$.
    \item[(3)] $\pi_i \TCn(\sT/\bS[z];\Z_p)_{\free} \subset \bigl(\widehat{\pi_i \TCn(\sT/\bS[z];\Z_p)_{\free}}\bigr)^{G_{K_\infty}}$ via the embedding $\pi_i \TCn(\sT/\bS[z];\Z_p)_{\free} \subset \widehat{\pi_i \TCn(\sT/\bS[z];\Z_p)_{\free}}$. 
    \item[(4)] $\pi_i\TCn(\sT_{\sO_\Cu};\Z_p)_{\free}[\frac{1}{p}]\otimes_{\Ainf[\frac{1}{p}]} W(\ol{k})[\frac{1}{p}]$ is fixed by $G_{K^{ur}}$.
  \end{itemize}
  In particular, $\pi_i \TCn(\sT/\bS[z];\Z_p)$ is a Breuil-Kisin $G_K$-module.
\end{thm}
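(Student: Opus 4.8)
The plan is to transport the functorial $G_K$-actions carried by $\TCn$ and $\TP$ of the base changes $\sT_{\sO_\Cu}$ and $\sT_{\ol k}$ onto the free Breuil-Kisin module $M_{\free}:=\pi_i\TCn(\sT/\bS[z];\Z_p)^{\vee\vee}$ along the comparison isomorphisms established above, and then to check the four axioms of \cite[Defn~7.1.1]{Gao} by naturality. First I would fix $n$ large enough that Theorems~\ref{BKtheorem}, \ref{TPTCcomp}, \ref{TPTCnOCOCcomp}, \ref{TPTPOColkcomp} and Lemmas~\ref{periodlemma}, \ref{periodlemma2} all apply, fix $i\ge n$, and abbreviate $M=\pi_i\TCn(\sT/\bS[z];\Z_p)$, $N=\pi_i\TCn(\sT_{\sO_\Cu};\Z_p)$. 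Then $M$ is a Breuil-Kisin module (Theorem~\ref{BKtheorem}), finitely generated (Lemma~\ref{periodlemma}), hence finitely presented, over the Noetherian two-dimensional regular local ring $\mS=W[[z]]$; since duals of finitely generated modules over a normal domain are reflexive and reflexive finitely generated modules over a regular local ring of dimension $\le 2$ are free (Auslander--Buchsbaum), $M_{\free}=M^{\vee\vee}$ is finite free over $\mS$. As $\phi\colon\mS\to\Ainf$ is flat, duals commute with $-\otimes_{\mS,\phi}\Ainf$, so $\widehat{M_{\free}}=(\widehat M)^{\vee\vee}$; by the functoriality-of-duals discussion around \eqref{dualAinfFrob}, axioms (1) and (2) for $M_{\free}$ reduce to the same statements for $M$, i.e. it suffices to equip $\widehat M=M\otimes_{\mS,\phi}\Ainf$ with a continuous $\Ainf$-semilinear $G_K$-action commuting with $\widehat\varphi_M$ of \eqref{power}.

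\emph{Master identification, axiom (1).} I would compose the $G_{K_\infty}$-equivariant isomorphism $M\otimes_{\mS,\ol\phi}\Ainf\simeq\pi_i\TP(\sT_{\sO_\Cu};\Z_p)$ of Theorem~\ref{TPTCcomp} with the $G_K$-equivariant isomorphism $N\otimes_{\Ainf,\varphi_{\Ainf}}\Ainf\simeq\pi_i\TP(\sT_{\sO_\Cu};\Z_p)$ of Theorem~\ref{TPTCnOCOCcomp}. Since $\ol\phi=\varphi_{\Ainf}\circ\phi$ (diagram~\eqref{SAinfdiag}), the left term equals $\widehat M\otimes_{\Ainf,\varphi_{\Ainf}}\Ainf$, and as $\varphi_{\Ainf}$ is a ring isomorphism, base change along it is an autoequivalence; untwisting gives a $G_{K_\infty}$-equivariant isomorphism $\widehat M\simeq N$, where $\widehat M$ carries its tautological $G_{K_\infty}$-semilinear action — legitimate because $\phi(\mS)\subset\Ainf^{G_{K_\infty}}$, $[\pi^\flat]$ being $G_{K_\infty}$-fixed. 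Transporting the $G_K$-action on $N$ — continuous, as $N$ is a finitely generated $\Ainf$-module (Lemma~\ref{periodlemma2}) and the action arises from functoriality of $\TCn$ in the continuous $G_K$-ring $\sO_\Cu$ — across $\widehat M\simeq N$ yields a continuous $\Ainf$-semilinear $G_K$-action on $\widehat M$ restricting on $G_{K_\infty}$ to the tautological one; double-dualizing gives axiom (1) for $M_{\free}$, with $\widehat{M_{\free}}\simeq N^{\vee\vee}$ equivariantly.

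\emph{Axioms (2) and (3).} By the proof of Theorem~\ref{BKtheorem}, $\varphi_M$ is the composite of \eqref{TCnKisin} (induced by $\tilde{\varphi}^{hS^1}_{\sT}$) with the inverse of \eqref{canisom} (induced by $\can_\sT$); base changing along the flat $\phi$, with $\phi(E)=\xi$, presents $\widehat\varphi_M$ through these maps over $\Ainf$. On the $N$-side, the argument of Theorem~\ref{TPTCnOCOCcomp} together with its $\can$-analogue (Proposition~\ref{THHOCbSperf}, and $\can_{\sO_\Cu}$ an isomorphism after inverting $\xi$) exhibits the Breuil-Kisin Frobenius of $N$ as $\can_{\sT_{\sO_\Cu}}^{-1}\circ\varphi^{hS^1}_{\sT_{\sO_\Cu}}$ after inverting $\xi$, and both $\varphi^{hS^1}_{\sT_{\sO_\Cu}}$ and $\can_{\sT_{\sO_\Cu}}$ are natural in the $\sO_\Cu$-linear input, hence $G_K$-equivariant. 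It then remains to match these descriptions under $\widehat M\simeq N$, i.e. to see that the cyclotomic Frobenius and $\can$ are compatible with the base change from $\THH(\sT/\bS[z];\Z_p)$ to $\THH(\sT_{\sO_\Cu};\Z_p)$; this I would deduce from the naturality of the constructions of \cite[sections 6 and 11]{BMS2} — both sides being symmetric monoidal natural transformations on $\Catsat(\sO_K)$ that coincide on $\perf(\sO_K)$ by that reference, hence coincide by Proposition~\ref{compmonoid}. Thus $G_K$ commutes with $\widehat\varphi_M$, so with $\widehat\varphi_{M_{\free}}$: axiom (2). For axiom (3), the embedding $M_{\free}\hookrightarrow\widehat{M_{\free}}$, $m\mapsto m\otimes 1$, is injective ($\phi$ flat, $M_{\free}$ free), and the $G_K$-action restricts on $G_{K_\infty}$ to the tautological action, which fixes $m\otimes1$; hence $M_{\free}\subset\widehat{M_{\free}}^{G_{K_\infty}}$.

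\emph{Axiom (4), and the main obstacle.} Since $\mS\xrightarrow{\phi}\Ainf\to W(\ol k)$ kills $z$, one has $\widehat{M_{\free}}\otimes_{\Ainf}W(\ol k)=(M_{\free}/zM_{\free})\otimes_W W(\ol k)$, so Gao's axiom (4) follows once the inertia $G_{K^{ur}}$ acts trivially on $\pi_i\TCn(\sT_{\sO_\Cu};\Z_p)_{\free}[\frac{1}{p}]\otimes_{\Ainf[\frac{1}{p}]}W(\ol k)[\frac{1}{p}]$: the residual $G_k$-action is then $W(\ol k)$-semilinear coming from $\TCn(\sT_k;\Z_p)$-functoriality, and étale descent along $W\to W(\ol k)$ places the lattice $M_{\free}/zM_{\free}$ inside the $G_K$-fixed part. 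To get that triviality I would transfer Theorem~\ref{TPTPOColkcomp} through Theorem~\ref{TPTCnOCOCcomp} and its $\ol k$-analogue (same dualizability argument, using Proposition~\ref{THHolkperf} and that $\varphi^{hS^1}_{\ol k}$ is an isomorphism after inverting $u$) together with the compatibility of Frobenii along $\Ainf\to W(\ol k)$, obtaining a $G_{K^{ur}}$-equivariant isomorphism with $\pi_i\TCn(\sT_{\ol k};\Z_p)[\frac{1}{p}]$, on which $G_{K^{ur}}$ acts trivially since $\sT_{\ol k}=\sT_k\otimes_k\ol k$ is defined over $k$; this is $\widehat{M_{\free}}\otimes_{\Ainf}W(\ol k)[\frac{1}{p}]$ by the master identification. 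That finishes the verification that $M_{\free}$ is a Breuil-Kisin $G_K$-module. I expect the hard part to be the Frobenius-matching in axiom (2): checking that $-\otimes_{\mS,\phi}\Ainf$ carries the $\mS$-linear Breuil-Kisin Frobenius of $\TCn(\sT/\bS[z];\Z_p)$, assembled from its cyclotomic Frobenius and $\can$, to the manifestly $G_K$-equivariant cyclotomic-Frobenius structure on $\pi_i\TCn(\sT_{\sO_\Cu};\Z_p)$ — this forces one to invoke, and in the stable-$\infty$-categorical setting re-derive via Proposition~\ref{compmonoid}, the base-change compatibilities of \cite[sections 6 and 11]{BMS2}; a lesser nuisance is the integral lattice bookkeeping needed to pass from the $\frac{1}{p}$-inverted statement in axiom (4) to Gao's integral condition.
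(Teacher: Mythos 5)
Your proposal follows essentially the same route as the paper: install the $G_K$-action on $\widehat M$ by composing the $G_{K_\infty}$-equivariant identification of Theorem~\ref{TPTCcomp} with the $G_K$-equivariant one of Theorem~\ref{TPTCnOCOCcomp} (the paper's isomorphism~\eqref{BIWAKOUP}), match the Breuil--Kisin Frobenius with the cyclotomic-Frobenius/$\can$ description over $\sO_\Cu$ to get $G_K$-commutativity, read off the $G_{K_\infty}$-fixedness of $m\otimes 1$ from the tautological $G_{K_\infty}$-action on $M\otimes_{\mS,\phi}\Ainf$, and deduce axiom (4) by transporting Theorem~\ref{TPTPOColkcomp} along~\eqref{freeTCTP}, finishing with Gao's Lemma 7.1.2. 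The only differences are expository---you spell out the reflexivity argument for why $M^{\vee\vee}$ is finite free and you flag the Frobenius/base-change compatibility as needing an explicit appeal to Proposition~\ref{compmonoid}, which the paper leaves implicit.
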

\begin{proof}
After localization $\xi\in \Ainf \simeq \pi_0 \TCn({\sO_{\Cu}};\Z_p)$, the morphism $\can_\sT$ induces a morphism of $\TP(\sO_\Cu;\Z_p)[\frac{1}{\xi}]$-module:
\begin{equation}\label{TCTPOCcan}
\TCn(\sT_{\sO_{\Cu}};\Z_p)[\frac{1}{\xi}] \otimes_{\TCn({\sO_{\Cu}};\Z_p)[\frac{1}{\xi}],\can_{{\sO_{\Cu}}}} \TP({\sO_{\Cu}};\Z_p)[\frac{1}{\xi}] \to \TP(\sT_{\sO_{\Cu}};\Z_p)[\frac{1}{\xi}].
\end{equation}
Both sides of the map \eqref{TCTPOCcan} yield symmetric monoidal functors from $\Catsat(\sO_K)$ to $\Mod_{\TP(\sO_{\sO_{\Cu}};\Z_p)[\frac{1}{\xi}]}(\Sp)$, and the map \eqref{TCTPOCcan} yield a symmetric monoidal natural transformation between them. Thus the morphism \eqref{TCTPOCcan} is an equivalence. Note that the morphism 
\[
\can_{{\sO_{\Cu}}}:\pi_*\TCn({\sO_{\Cu}};\Z_p)[\frac{1}{\xi}] \to \pi_*\TP({\sO_{\Cu}};\Z_p)[\frac{1}{\xi}]
\]
is an isomorphism (see \cite[Proposition 11.10]{BMS2}). This yields a $G_K$-equivariant isomorphism
\begin{equation}\label{canisom2}
\can_{\sT_{\sO_{\Cu}}}[\frac{1}{\xi}]:\pi_*\TCn(\sT_{\sO_{\Cu}};\Z_p)[\frac{1}{\xi}] \simeq \pi_*\TP(\sT_{\sO_{\Cu}};\Z_p)[\frac{1}{\xi}].
\end{equation}
Combine \eqref{TCnTPOCAinfequi} with \eqref{canisom2}, we obtain a $G_K$-equivariant isomorphism
\begin{equation}\label{FrobTCnOC}
\pi_i\TCn(\sT_{\sO_\Cu};\Z_p)\otimes_{\Ainf,\varphi_{\Ainf}} \Ainf[\frac{1}{\xi}]\overset{\eqref{TCnTPOCAinfequi}}{\simeq} \pi_i\TP(\sT_{\sO_\Cu};\Z_p)[\frac{1}{\xi}] \overset{\eqref{canisom2}}{\simeq}\pi_i\TCn(\sT_{\sO_{\Cu}};\Z_p)[\frac{1}{\xi}]
\end{equation}
for any $i \geq n$. Since $\varphi_{\Ainf}$ is an isomorphism, by Theorem~\ref{TPTCcomp} and Theorem~\ref{TPTCnOCOCcomp}, we have a $G_{K_\infty}$-equivariant isomorphism
\begin{equation}\label{BIWAKOUP}
\pi_i\TCn(\sT/\bS[z];\Z_p)\otimes_{\mS,\phi}\Ainf\simeq \pi_i\TCn(\sT_{\sO_\Cu};\Z_p),
\end{equation}
 and the isomorphism \eqref{FrobTCnOC} is same as $\widehat{\varphi}_{\pi_i\TCn(\sT/\bS[z];\Z_p)}$. Since $G_K$ naturally acts on $\pi_i\TCn(\sT_{\sO_\Cu};\Z_p)$, we obtain the claim (1). By the isomorphism~\eqref{FrobTCnOC}, $G_K$ commutes with $\widehat{\varphi}_{\pi_i \TCn(\sT/\bS[z];\Z_p)}$ we obtain the claim (2). 
 
 Since $\varphi:\mS\to\Ainf$ is flat, we have a $G_{K_\infty}$-equivariant
 \begin{equation}\label{kenka1} \pi_i\TCn(\sT/\bS[z];\Z_p)_{\free}\otimes_{\mS,\phi}\Ainf\simeq \pi_i\TCn(\sT_{\sO_\Cu};\Z_p)_{\free}  
 \end{equation}
 by \eqref{BIWAKOUP}. Since $g\in G_{K_\infty}$ acts on $1\otimes g$ on $\pi_i\TCn(\sT/\bS[z];\Z_p)_{\free}\otimes_{\mS,\phi}\Ainf$, we obtain the claim (3).
 
Since $\varphi_{\Ainf}:\Ainf \to \Ainf$ is flat, we have $G_K$-equivariant isomorphism
\begin{equation}\label{freeTCTP}
\pi_i\TCn(\sT_{\sO_\Cu};\Z_p)_{\free}\otimes_{\Ainf,\varphi_{\Ainf}} \Ainf \simeq \pi_i\TP(\sT_{\sO_\Cu};\Z_p)_{\free}
\end{equation}
by Theorem~\ref{TPTCnOCOCcomp}. By Theorem~\ref{TPTCcomp} and \cite[Proposition 4.13]{BMS1}, the natural map $\pi_i\TP(\sT_{\sO_\Cu};\Z_p)[\frac{1}{p}] \to \pi_i\TP(\sT_{\sO_\Cu};\Z_p)_{\free}[\frac{1}{p}]$ is a $G_K$-equivariant isomorphism. By Theorem~\ref{TPTPOColkcomp} and an isomorphism \eqref{freeTCTP}, $\pi_i\TCn(\sT_{\sO_\Cu};\Z_p)_{\free}[\frac{1}{p}] \otimes_{\Ainf[\frac{1}{p}]} W(\ol{k})[\frac{1}{p}]$ is fixed by $G_{K^{ur}}$.

Since $\pi_i\TCn(\sT_{\sO_\Cu};\Z_p)_{\free}$ is finite free $\mS$-module, there is a natural inclusion $\pi_i\TCn(\sT_{\sO_\Cu};\Z_p)_{\free} \hookrightarrow\pi_i\TCn(\sT_{\sO_\Cu};\Z_p)_{\free}[\frac{1}{p}]$ and it induces a natural inclusion
\[
\pi_i\TCn(\sT_{\sO_\Cu};\Z_p)_{\free} \otimes_{\Ainf} W(\ol{k}) \hookrightarrow \pi_i\TCn(\sT_{\sO_\Cu};\Z_p)_{\free}[\frac{1}{p}] \otimes_{\Ainf[\frac{1}{p}]} W(\ol{k})[\frac{1}{p}].
\]
Thus $\pi_i\TCn(\sT_{\sO_\Cu};\Z_p)_{\free} \otimes_{\Ainf} W(\ol{k})$ is fixed by $G_{K^{ur}}$. The last claim follows from claims (1)-(4) and \cite[Lemma 7.1.2]{Gao}.
\end{proof}

\begin{thm}[Main theorem]\label{mainwithproof}
\begin{itemize}
  \item[(1)] For an $\sO_K$-linear idempotent-complete, small smooth proper stable $\infty$-category $\sT$, there is a natural number $n$ such that for any $i\geq n$, $\pi_i \TCn(\sT/\bS[z];\Z_p)_{\free}$ is a Breuil-Kisin $G_K$-module in the sense of \cite{Gao}. In particular, $\bigl( \widehat{\pi_i\TCn(\sT/\bS[z];\Z_p)_{\free}}\otimes_{\Ainf} W(\Cu) \bigr)^{\widehat{\varphi}\otimes\varphi_{\Cu}=1}$ is a $\Z_p$-lattice of a semi-stable representation, where $\varphi$ is the Frobenius map of $\TCn(\sT/\bS[z];\Z_p)_{\free}$ and $\widehat{\varphi}$ is given in \eqref{power} and $\varphi_{\Cu}$ is the Frobenius of $W(\Cu)$.

  \item[(2)] We assume the conjecture~\ref{kunnethK1} holds. Then for an $\sO_K$-linear idempotent-complete, small smooth proper stable $\infty$-category $\sT$ an integer $i$, there is a $G_K$-equivariant isomorphism
\[
\pi_i \LK K(\sT_{\Cu})_{\free}\otimes_{\Z_p}\Q_p \simeq \bigl( \widehat{\pi_i\TCn(\sT/\bS[z];\Z_p)_{\free}}\otimes_{\Ainf} W(\Cu) \bigr)^{\widehat{\varphi}\otimes \varphi_{\Cu}=1}\otimes_{\Z_p}\Q_p.
\]
In particular, $\pi_i \LK K(\sT_{\Cu})\otimes_{\Z_p}\Q_p$ is a semi-stable representation.

\item[(3)] Assume $K$ is a finite extension of $\Q_p$. For a smooth proper variety $X$ over $\sO_K$, $H^*_\et(X_{\Cu},\Q_p)$ is a semi-stable representation. 
\end{itemize}
\end{thm}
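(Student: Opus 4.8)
\emph{Part (1).} The plan is to read the first statement off Theorem~\ref{BKGKmodulethm} together with Gao's theory. Theorem~\ref{BKGKmodulethm} already establishes that, for $i$ sufficiently large, the finite free $\mS$-module $\fM_i:=\pi_i\TCn(\sT/\bS[z];\Z_p)_{\free}$ equipped with the Frobenius induced by the cyclotomic Frobenius satisfies the four axioms of a Breuil-Kisin $G_K$-module in the sense of \cite[Defn~7.1.1]{Gao} --- the $G_K$-action on $\widehat{\fM_i}=\fM_i\otimes_{\mS,\phi}\Ainf$ being the one transported via \eqref{kenka1}, and $\widehat\varphi_{\fM_i}$ being \eqref{power}. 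It then remains to invoke Gao's correspondence (a version of Kisin's theory valid over a general discretely valued $K$): the assignment $\fM\mapsto T(\fM):=\bigl(\widehat\fM\otimes_{\Ainf}W(\Cu)\bigr)^{\widehat\varphi\otimes\varphi_{\Cu}=1}$ sends Breuil-Kisin $G_K$-modules to $G_K$-stable $\Z_p$-lattices inside semi-stable representations; applied to $\fM_i$ this is exactly (1). This step needs neither Conjecture~\ref{kunnethK1} nor $[K:\Q_p]<\infty$.

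\emph{Part (2).} Assume Conjecture~\ref{kunnethK1}; I aim for a $G_K$-equivariant isomorphism $\pi_i\LK K(\sT_\Cu)_{\free}\otimes_{\Z_p}\Q_p\simeq T(\fM_i)\otimes_{\Z_p}\Q_p$, after which semi-stability is inherited from (1). Since $\xi$ and $\mu=[\epsilon]-1$ become units in $W(\Cu)$, $\can$ is invertible after $\otimes_{\Ainf}W(\Cu)$; using \eqref{BIWAKOUP}, \eqref{kenka1}, \eqref{FrobTCnOC} and conjugating $\widehat\varphi_{\fM_i}$ by $\can$, I rewrite $T(\fM_i)$, rationally and $G_K$-equivariantly, as $\bigl(\pi_i\TP(\sT_{\sO_\Cu};\Z_p)_{\free}\otimes_{\Ainf}W(\Cu)\bigr)^{\varphi_{\TP}\otimes\varphi_{\Cu}=1}$, where $\varphi_{\TP}:=\varphi^{hS^1}\circ\can^{-1}$ is the Frobenius on $\TP$. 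On the other side, the cyclotomic trace $\LK K(\sT_\Cu)\to\LK\TP(\sT_{\sO_\Cu})$ has image inside the $\varphi_{\TP}$-fixed part (a trace class is $\can$ of a $\TCn$-class on which $\can$ and $\varphi^{hS^1}$ agree), and by Theorem~\ref{KTCncomp} it becomes, after $\otimes_{\Z_p}\Ainf[\frac{1}{[\epsilon]-1}]^{\wedge}_p$, an isomorphism onto $\pi_i\TP(\sT_{\sO_\Cu};\Z_p)[\frac{1}{[\epsilon]-1}]^{\wedge}_p$. Base-changing further along $\Ainf[\frac{1}{[\epsilon]-1}]^{\wedge}_p\to W(\Cu)$ (legitimate since $[\epsilon]-1$ is a unit there) and using that every \'etale $\varphi$-module over $(W(\Cu)[\frac{1}{p}],\varphi_{\Cu})$ is trivial (as $\Cu^\flat$ is algebraically closed and $W(\Cu)^{\varphi_{\Cu}=1}=\Z_p$), I take $\varphi$-invariants: the left side yields $\pi_i\LK K(\sT_\Cu)_{\free}\otimes\Q_p$ and the right side yields $\bigl(\pi_i\TP(\sT_{\sO_\Cu};\Z_p)_{\free}\otimes_{\Ainf}W(\Cu)\bigr)^{\varphi_{\TP}\otimes\varphi_{\Cu}=1}\otimes\Q_p$. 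Matching this with the previous rewriting of $T(\fM_i)$ gives the isomorphism, hence the semi-stability of $\pi_i\LK K(\sT_\Cu)\otimes\Q_p$.

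\emph{Part (3).} Let $K/\Q_p$ be finite, $X/\sO_K$ smooth proper, and put $\sT=\perf(X)$. Inside the arguments for (1) and (2) the only appeal to Conjecture~\ref{kunnethK1} is through Theorem~\ref{KTCncomp}; for the commutative category $\perf(X)$ this is replaced by the unconditional Theorem~\ref{KTPcompcommutative}. Hence (1) and (2) hold for $\sT=\perf(X)$ unconditionally, so $\pi_j\LK K(X_\Cu)\otimes\Q_p$ is a semi-stable $G_K$-representation for $j=0,1$. Since $X_\Cu$ is smooth over $\Cu$, the Thomason spectral sequence degenerates rationally and provides $G_K$-equivariant isomorphisms $\pi_0\LK K(X_\Cu)\otimes\Q_p\simeq\bigoplus_i H^{2i}_\et(X_\Cu,\Q_p(i))$ and $\pi_1\LK K(X_\Cu)\otimes\Q_p\simeq\bigoplus_i H^{2i-1}_\et(X_\Cu,\Q_p(i))$ (see \eqref{even02}--\eqref{odd03}). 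As semi-stability is preserved under Tate twist, finite direct sums and passage to a direct summand, it follows that $H^*_\et(X_\Cu,\Q_p)$ is semi-stable.

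The step I expect to be the real obstacle is the descent in Part (2): one must follow the $G_{K_\infty}$- versus $G_K$-actions and the $\xi$- and $\mu$-localisations through Theorems~\ref{TPTCcomp}, \ref{TPTCnOCOCcomp} and \ref{KTCncomp} carefully enough that the Frobenius on $\widehat{\fM_i}$ really is the cyclotomic one, and then feed this into the $W(\Cu)$-semilinear algebra reconstructing a $p$-adic representation from its associated $\varphi$-module --- precisely the input packaged in \cite[Lemma~7.1.2]{Gao}. Granting this, Parts (1) and (3) are formal, resting only on Theorems~\ref{BKGKmodulethm}, \ref{KTCncomp}, \ref{KTPcompcommutative} and Gao's results.
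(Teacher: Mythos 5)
Your proposal follows the paper's route: Part~(1) via Theorem~\ref{BKGKmodulethm} together with Gao's theory of Breuil--Kisin $G_K$-modules, Part~(2) by conjugating $\widehat\varphi$ through $\can$ and descending over $W(\Cu)$ (the paper packages this as a $G_K$-equivariant Cartesian diagram built from Theorem~\ref{KTCncomp}, concluding an inclusion $\pi_i\LK K(\sT_\Cu)_{\free}\subset T(\fM_i)$ and citing \cite[Proposition~7.1.4]{Gao}, where you instead argue directly with \'etale $\varphi$-modules over $W(\Cu)[\frac{1}{p}]$ --- same semilinear algebra, made explicit), and Part~(3) via the unconditional commutative K\"unneth result plus the Thomason spectral sequence degeneration. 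The one small slip is the citation: the reconstruction input in Part~(2) is \cite[Proposition~7.1.4]{Gao}, not \cite[Lemma~7.1.2]{Gao}, which the paper already used inside Theorem~\ref{BKGKmodulethm} to verify the Breuil--Kisin $G_K$-module axioms.
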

\begin{proof}
 Claim (1) follows from Theorem~\ref{BKGKmodulethm} and \cite[Theorem 7.1.7]{Gao}. We write $\mu=[\epsilon]-1$. We note $\Z_p \to \Ainf[\frac{1}{\mu}]^{\wedge}_p$ is flat. As $\xi | \mu$ and $\mu|\varphi(\mu)$ in $\Ainf$, we have a commutative $G_K$-equivariant diagram:\footnotesize
 \[
 \xymatrix{
&\widehat{\pi_i\TCn(\sT/\bS[z];\Z_p)_{\free}}[\frac{1}{\mu}]^{\wedge}_p \ar[d]_{\simeq}\ar[rr]^-{\widehat{\varphi}}& &\widehat{\pi_i\TCn(\sT/\bS[z];\Z_p)_{\free}}[\frac{1}{\varphi_{\Ainf}(\mu)}]^{\wedge}_p\ar[d]_{\simeq}\\
 \pi_i\LK K(\sT_\Cu)_{\free}\ar@{^{(}-_>}[r]&\pi_i\TCn(\sT_{\sO_\Cu};\Z_p)_{\free}[\frac{1}{\mu}]^{\wedge}_p \ar[r]^-{\eqref{freeTCTP}}&\pi_i\TP(\sT_{\sO_\Cu};\Z_p)_{\free}[\frac{1}{\varphi_{\Ainf}(\mu)}]^{\wedge}_p\ar@{=}[r]&\pi_i\TCn(\sT_{\sO_\Cu};\Z_p)_{\free}[\frac{1}{\varphi_{\Ainf}(\mu)}]^{\wedge}_p\\
 \Z_p \ar@{^{(}-_>}[r]_{} \ar[u]&\Ainf[\frac{1}{\mu}]^{\wedge}_p\ar[u]\ar[r]_{\varphi_{\Ainf}} & \Ainf[\frac{1}{\varphi_{\Ainf}(\mu)}]^{\wedge}_p\ar[u]&
 }
 \]\normalsize
 where all squares are Cartesian by Theorem~\ref{KTCncomp} and the assumption of Conjecture~\ref{kunnethK1}. Since $\Z_p$ is stable under the morphism $\varphi_{\Ainf}$, we have a $G_K$-equivariant inclusion 
 \[
 \pi_i\LK K(\sT_\Cu)_{\free} \subset \bigl( \widehat{\pi_i\TCn(\sT/\bS[z];\Z_p)_{\free}}\otimes_{\Ainf} W(\Cu) \bigr)^{\widehat{\varphi}\otimes \varphi_{\Cu}=1}.
 \]
By Theorem~\ref{KTCncomp} and \cite[Proposition~7.1.4]{Gao}, we obtain the claim (2). The claim (3) follows from Theorem~\ref{commkunnethK1}. 
\end{proof}

\bibliography{bib}
\bibliographystyle{alpha}

%

%

\end{document}